\def\vc#1{\mbox{\boldmath $#1$}}
\newcommand{\down}[2]{\smash{\lower#2\hbox{#1}}}
\newcommand{\up}[2]{\smash{\lower-#2\hbox{#1}}}
\newcommand{\bbN}{\mathbb{N}}
\newcommand{\bbZ}{\mathbb{Z}}
\def\ifcompleted{\iftrue}  
\def\vc#1{\mbox{\boldmath $#1$}}
\newtheorem{thm}{Theorem}[section]
\newtheorem{lem}[thm]{Lemma}
\newtheorem{prop}[thm]{Proposition}
\newtheorem{remark}{Remark}
\newtheorem{coro}[thm]{Corollary}
\begin{document}

\begin{frontmatter}




\title{Asymptotic Analysis for Markovian Queues with Two Types of Nonpersistent Retrial Customers}



\author[label1]{Tuan Phung-Duc}

\address[label1]{Department of Mathematical and Computing Sciences \\
Tokyo Institute of Technology, Ookayama, Tokyo 152-8552, Japan \\
              Tel.: +81-(0)3-5734-3851 \quad
              Fax: +81-(0)3-5734-2752  \\
E-mail: tuan@is.titech.ac.jp}

\begin{abstract}
We consider Markovian multiserver retrial queues where a blocked customer 
has two opportunities for abandonment: at the moment of blocking or at the departure epoch from the orbit. 
In this queueing system, the number of customers in the system (servers and buffer) and that in the orbit form a 
level-dependent quasi-birth-and-death (QBD) process whose stationary distribution is expressed in 
terms of a sequence of rate matrices. Using a simple perturbation technique and a matrix analytic method, we derive Taylor series expansion 
for nonzero elements of the rate matrices with respect to the number of customers in the orbit. 
We also obtain explicit expressions for all the coefficients of the expansion. Furthermore, we derive tail asymptotic formulae for the joint stationary distribution of the number of customers in the system and that in the orbit. 
Numerical examples reveal that the tail probability of the model with two types of nonpersistent customers is greater than that of the corresponding 
model with one type of nonpersistent customers.
\end{abstract}

\begin{keyword}
Taylor series expansion \sep Perturbation \sep Asymptotic analysis \sep Multiserver retrial queue \sep Level-dependent QBD \sep 
Matrix analytic method \sep Censoring

\MSC 60K25 \sep 68M20 \sep 90B22
\end{keyword}
\end{frontmatter}


\section{Introduction}
Retrial queues are characterized by the fact that an arriving customer that is blocked leaves the service area but repeats the request after some random time. These models naturally arise from various modelling problems of telecommunication and network systems~\cite{artalejo08}. The reader is referred to~\cite{artalejo_review} for a list of recent papers on retrial queues. Research of retrial queues is pioneered by Cohen~\cite{Cohen56} who proposes and analyzes the multiserver model. Due to the inhomogeneity in the underlying Markov chain, the analysis of multiserver retrial queues is much more difficult than that of corresponding models without retrials. As a result, analytical solutions for multiserver retrial queues have been obtained for only a few special cases. An explicit solution for the joint stationary distribution of the server state and the number of customers in the orbit is obtained only for the M/M/1/1 retrial queue~\cite{Fali97}. For the M/M/2/2 retrial queue, the joint stationary distribution is expressed in terms of hypergeometric functions \cite{Avram11,Fali97,Hans87,phung2}. 
Do~\cite{Do10} presents an analytic solution for an M/M/1/1 retrial queue with working vacation and constant retrial rate. 

We refer to~\cite{Kim95,Choi98,Gome99,Pearce89,phung1,phung2} for effort to find analytical solutions for M/M/$c$/$c$ retrial queues 
with more than two servers by the generating function approach. Kim~\cite{Kim95} and Gomez-Corral and Ramalhoto~\cite{Gome99} deal with 
the case of three servers while Choi and Kim~\cite{Choi98} derive analytical solution for a model with feedback. It should be noted that 
some technical assumptions are imposed in these papers. Using an alternative approach, Phung-Duc et al.~\cite{phung1} show that the joint 
stationary distribution is expressed in terms of continued fractions for the cases of $c=3$ and 4, without any technical 
assumption as presented in literature. The same authors in~\cite{phung2} further derive analytical solutions for the joint stationary distribution 
of state-dependent M/M/$c$/$c+r$ retrial queues with Bernoulli abandonment, where $c+r \leq 4$. 
Pearce~\cite{Pearce89} presents an expression for the joint stationary distribution in terms of generalized continued fractions for the 
M/M/$c$/$c$ retrial queue with any $c$. Although, the formulae in~\cite{Pearce89} do not directly yield a 
numerical algorithm, this is one of the seminal papers providing the most general analytical results for the model. 

Recently, asymptotic analysis for multiserver retrial queues has been receiving considerable attention.
Liu and Zhao~\cite{Binliu10} use a censoring technique and a level-dependent QBD approach 
to derive analytical solutions for the M/M/$c$/$c$ retrial queues for 
the cases of $c=1$ and 2 and investigate the asymptotic behavior for the stationary distribution of the general case 
with any $c$. Using the same approach, Liu et al.~\cite{Binliu11} extend their study to M/M/$c$/$c$ retrial queues 
with one type of nonpersistent customers. Kim et al.~\cite{3Kim2012} derive more detailed asymptotic formulae for the joint stationary 
distribution of M/M/$c$/$c$ retrial queues in comparison with those obtained by Liu and Zhao~\cite{Binliu10}. Furthermore, Kim and 
Kim~\cite{2Kim2012} refine the asymptotic result obtained by Liu et al.~\cite{Binliu11}.
The methodology of~\cite{3Kim2012,2Kim2012} is based on an investigation of the analyticity of generating functions. 
However, the asymptotic formulae presented in~\cite{3Kim2012,2Kim2012} still contain some unknown coefficients.

We recall that the number of customers in the system and that in the 
orbit form a level-dependent QBD process whose stationary distribution can be expressed in terms of a sequence of rate matrices~\cite{Ramaswami_Taylor96}. 
Liu et al.~\cite{Binliu11,Binliu10} focus on the asymptotic behavior of the joint stationary distribution. 
To this end, they derive a few essential expansion formulae (up to three terms) for some elements of the rate matrices, which 
are enough for their purpose. Liu and Zhao~\cite{Binliu10} remark that it seems that there is no unified pattern for the higher order expansions. 
In this paper, motivated by Liu et al.~\cite{Binliu11,Binliu10}, we present an exhaustive 
perturbation analysis for the M/M/$c$/$K$ retrial queue with two types of nonpersistent customers, which is recently introduced and numerically analyzed in ~\cite{phung-duc13}. 
This model extends those with and without one type of nonpersistent customers~\cite{Binliu11,Binliu10}. 

Using a unified simple approach, we are able to derive Taylor series expansion for all nonzero elements of the rate matrices. Furthermore, 
the coefficients of this Taylor series expansion are explicitly obtained in terms of recursive formulae. 
It should be noted that the rate matrix for level-dependent QBD process cannot be obtained in a closed form in general. 
To the best of our knowledge, this paper is the first which obtains Taylor series expansion for the rate matrix of a level-dependent QBD process.
In addition, we also derive tail asymptotic bounds for the joint stationary distribution using the methodology developed by Liu et al.~\cite{Binliu11,Binliu10}.

The rest of the paper is organized as follows. In Section~\ref{ModelDes:sec} we present multiserver retrial queues 
with two types of nonpersistent customers and their level-dependent QBD formulation. Section~\ref{perturbation:sec} is 
devoted to the main results where we present expansion formulae for all the nonzero elements of the rate matrices. Section~\ref{numerical:exam} 
shows some numerical examples to demonstrate the accuracy of the Taylor series expansions and the tail asymptotic bounds of the joint stationary distribution. Section~\ref{conclusion:sec} concludes the paper.

\section{Model Description and Preliminaries}\label{ModelDes:sec}
\begin{figure}[tbhp]
\begin{center}
\includegraphics[scale=0.35]{./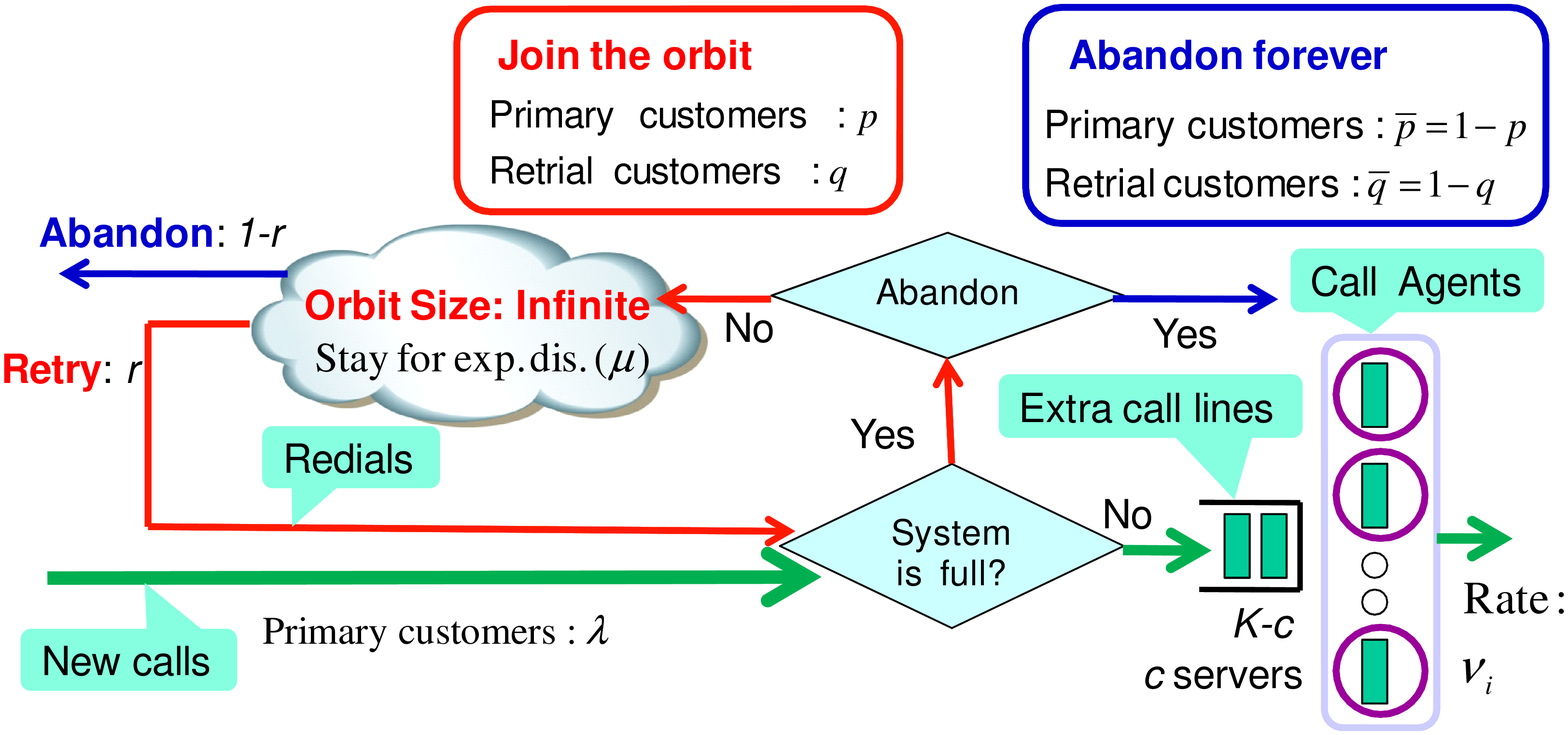} 
\caption{Retrial Queues with Two Types of Nonpersistent Customers.}
\label{model:fig}
\end{center}
\end{figure}

\subsection{Model description}
We first describe the M/M/$c$/$K$ retrial queue, where there are $c$ servers 
and a waiting room with $K -c$ waiting positions in front of the servers. 
Primary customers arrive at the servers according to a Poisson process with rate $\lambda > 0$ and 
the total service rate of all the servers is $\nu_i$, provided that there are $i$ customers 
in the system (the servers and the waiting room). We assume that $0= \nu_0 \leq \nu_1 \leq \nu_2 \leq \cdots \leq \nu_{K-1} \leq \nu_K$.
An arriving primary customer enters the system if possible otherwise the primary customer either moves to the orbit with probability $p$ or 
abandons (not joins the orbit) with probability $\bar{p} = 1-p$.

A customer in the orbit is called a {\it retrial customer} hereafter. 
Each retrial customer stays in the orbit for an exponentially distributed time 
with a finite positive mean $1/\mu$. Upon the departure epoch from the orbit, each customer either 
retries to enter the servers or abandons forever with probabilities $r$ and $\bar{r} = 1-r$, respectively. 
A retrial customer that does not abandon at the departure epoch from the orbit either joins 
the orbit again with probability $q$ or gives up forever with probability $\bar{q}=1-q$ 
if the system is fully occupied upon arrival, otherwise the retrial customer enters the system.

Let $X(t) = (C(t),N(t))$ ($t \ge 0$), where $C(t)$ and $N(t)$ denote
the number of customers in the system and that in the orbit, at time $t$, respectively. 
It is easy to see that the bivariate process $\{X(t);t \ge 0\}$ 
is a Markov chain with the state space $\{0,1,\dots,K \} \times \bbZ_+$, where
$\bbZ_+=\{0,1,2,\dots \}$. See Figure~\ref{model:fig} for the flows of customers.

It is easy to show that if $q<1$ or $r<1$, $\{X(t);t \ge 0\}$ is always ergodic, otherwise the Markov chain is 
ergodic if and only if $\rho = (\lambda p) / \nu_K < 1$ (See e.g.~Phung-Duc et al.~\cite{phung2} or Falin and Templeton~\cite{Fali97}). 
Throughout the paper, we assume that $\{ X(t); t \geq 0\}$ is ergodic.

\subsection{Motivation of the model}
In almost all retrial queue literature, the orbit is an abstracted unit which is not given a clear physical justification. 
In this paper, we introduce the service function into the orbit. To this end, we assume that the orbit provides some 
kind of service and a blocked customer may or may not be satisfied with this service. In case of being satisfied, 
the customer departs from the orbit, otherwise it reattempts to get service at the original service facility. Thus, 
we may consider the orbit as the secondary service of the primary M/M/$c$/$K$ loss system. Furthermore, the model 
presented here generalizes several models studied in the literature~\cite{artalejo08,artalejo_review}.

In particular, our model is motivated from modelling of a call center operating in a group of cooperative call centers. 
In this situation, a blocked call in a call center may be forwarded to another one. 
However, due to the speciality  of each call center, an operator in a call center may not 
be able to answer perfectly a call forwarded from the others. A blocked call either satisfies with the service 
of the forwarded call center and departs or reattempts for service in the original one.  We refer to~\cite{phung-duc13}  for some more 
motivations of the model.

\subsection{Level-dependent QBD formulation}
It is easy to confirm that $\{X(t); t \geq 0\}$ is a level-dependent QBD process whose infinitesimal generator is given by 
\begin{eqnarray*}
\vc{Q}
&=&
\left (
\begin{array}{llllll}
	\vc{Q}^{(0)}_1 \  & \vc{Q}^{(0)}_0  \  & \vc{O}  \  &  \vc{O} \ & \cdots \
	\\
	\vc{Q}^{(1)}_2 & \vc{Q}^{(1)}_1   & \vc{Q}^{(1)}_0 & \vc{O} &  \cdots
	\\
	\vc{O}   & \vc{Q}^{(2)}_2 & \vc{Q}^{(2)}_1 & \vc{Q}^{(2)}_0 &  \cdots
	\\
	\vc{O}   &  \vc{O}    & \vc{Q}^{(3)}_2 & \vc{Q}^{(3)}_1  & \cdots 
	\\
	\vdots   & \vdots     & \vdots   & \vdots  & \ddots
\end{array}
\right ),
\end{eqnarray*}
where $\vc{O}$ denotes a matrix of an appropriate dimension with all zero entries and   
 $\vc{Q}^{(n)}_0$, $\vc{Q}^{(n)}_1$ ($n \in \bbZ_+$) and $\vc{Q}^{(n)}_2$ ($n \in \bbN = \{1,2,\dots \} $) are given by
\begin{eqnarray}
\vc{Q}^{(n)}_0  & = & 
\left (
\begin{array}{cccccc}
	0 & 0 & \cdots &  0 & 0 \\
	0    & 0 & \cdots &  0 & 0 \\
	\vdots & \vdots & \ddots & \vdots & \vdots \\
	0 & 0 &  \cdots &0 & 0 \\
	0 & 0 &  \cdots & 0 & \lambda p  \\
\end{array}
\right ),  
\qquad 
\vc{Q}^{(n)}_2   =  
\left (
\begin{array}{cccccc}
	n \mu \bar{r}  \  & n \mu r  \  & 0 & \cdots &  0 \\
	0 & n \mu \bar{r} & n \mu r \  &  \ddots &  \vdots \\
	\vdots &  & \ddots & \ddots &   0 \\
	\vdots &   &  & n \mu \bar{r} \  & n \mu r  \\
	0 & \cdots & \cdots & 0 & n \mu (\bar{r} + r \bar{q}) \\
\end{array} 
\right ), \qquad \nonumber \\
\vc{Q}^{(n)}_1  & = & 
\left (
\begin{array}{cccccc}
	b^{(n)}_0 \ & \lambda \ & 0 & \cdots  \ &  \cdots & 0 \\
	\nu_1 \  & b^{(n)}_1 \ & \lambda  & \ddots  \ &   & \vdots \\
	0 \ & \nu_2 \ & b^{(n)}_2 \ & \ddots &  \ddots & \vdots \\
	\vdots & \ddots & \ddots & \ddots & \ddots & 0 \\
	\vdots & & \ddots & \ddots \quad & b^{(n)}_{K -1} \quad & \lambda \\
	0 & \cdots & \cdots & 0 &  \nu_K \ & b^{(n)}_K \\
\end{array}
\right ). \nonumber
\end{eqnarray}
The diagonal elements of $\vc{Q}^{(n)}_1$ are given by 
\begin{eqnarray*}
	b^{(n)}_i & = & -(\lambda   + n \mu  + \nu_i), \qquad i = 0,1,2,\dots, K-1, \\
	b^{(n)}_K & = & - (\lambda p  + n \mu (\bar{r} + r \bar{q}) + \nu_K).
\end{eqnarray*}

Let 
\[
	\pi_{i,n} = \lim_{t \to \infty} \Pr\{C(t)=i, N(t)=n\}, \qquad i = 0,1,2,\dots,K, \qquad n \in \bbZ_+,
\]
denote the joint stationary probability of the number of customers in the system and that in the orbit. 

Furthermore, let $\vc{\pi}_n =(\pi_{0,n},\pi_{1,n},\dots,\pi_{K,n})$ and $\vc{\pi} =(\vc{\pi}_0,\vc{\pi}_1,\dots)$. 
The stationary distribution $\vc{\pi}$ is the solution of the following system of equations.
\begin{eqnarray}
	\label{fund:eq}
	\vc{\pi} \vc{Q} = \vc{0}, \qquad \vc{\pi} \vc{e} = 1, 
\end{eqnarray}
where vectors $\vc{e}$ and $\vc{0}$ denote a column vector and a row vector with an appropriate dimension 
whose entries are ones and zeros, respectively.
Equation (\ref{fund:eq}) is rewritten in a vector form as follows.
\begin{eqnarray}
	\label{vector:recurn}
	\vc{\pi}_{n-1} \vc{Q}^{(n-1)}_{0} + \vc{\vc{\pi}}_{n} \vc{Q}^{(n)}_1 + \vc{\vc{\pi}}_{n+1} \vc{Q}^{(n+1)}_2 
	& = &  \vc{0}, \qquad n \in \bbN, \\
	\label{normalize:eq}
	\vc{\vc{\pi}} \vc{e}  & = &  1.
\end{eqnarray}
The solution of (\ref{vector:recurn}) and (\ref{normalize:eq}) is given by 
\[
       \vc{\vc{\pi}}_{n}  =  \vc{\vc{\pi}}_{n-1} \vc{R}^{(n)}, \qquad n \in \bbN, 
\]
in which $\{\vc{R}^{(n)}; n \in \bbN\}$ is the minimal nonnegative solution of 
\begin{equation}\label{ricachi:eq}
      \vc{Q}^{(n-1)}_{0} + \vc{R}^{(n)} \vc{Q}^{(n)}_1 + \vc{R}^{(n)} \vc{R}^{(n+1)} \vc{Q}^{(n+1)}_2  = \vc{O}, \qquad n \in \bbN,
\end{equation}
according to~\cite{Ramaswami_Taylor96}. Furthermore, the boundary vector $\vc{\vc{\pi}}_0$ is the solution of 
\begin{eqnarray}
     \vc{\vc{\pi}}_0 (\vc{Q}^{(0)}_{1} + \vc{R}^{(1)}  \vc{Q}^{(1)}_{2} )       & = & \vc{0}, \nonumber \\ 
     \vc{\vc{\pi}}_0( \vc{I} + \vc{R}^{(1)} + \vc{R}^{(1)}\vc{R}^{(2)} + \dots) \vc{e}  & = & 1. \nonumber
\end{eqnarray}
where $\vc{I}$ denotes an identity matrix with an appropriate dimension.
%
%
%
%
%
%

The rate matrices can be computed using the algorithm in~\cite{phung-duc10b} based on Propositions~\ref{prop:recursive} and~\ref{prop:rn} below.
\begin{prop}
\label{prop:recursive}
Let ${\mathcal M}$ denote a set of real square matrices of order $K+1$. We define $R_{n} : {\mathcal M} \rightarrow {\mathcal M}$ as
\begin{align*}
R_{n}(\vc{X}) = \vc{Q}^{(n-1)}_{0} \left(- \vc{Q}^{(n)}_{1} -\vc{X} \vc{Q}^{(n+1)}_{2}\right)^{-1}, \qquad  n \in \bbN.
\end{align*}
Then, according to (\ref{ricachi:eq}), the matrix sequence $\{R^{(n)}; n \in \bbN\}$ satisfies the following backward recursive equation.
\begin{align}\label{backward:for}
\vc{R}^{(n)} = R_{n}(\vc{R}^{(n+1)}) = R_{n} \circ R_{n+1} \circ \cdots \circ R_{n+k} \circ \cdots , \qquad n \in \bbN,
\end{align}
where $f \circ g(\cdot) = f(g(\cdot))$.
\end{prop}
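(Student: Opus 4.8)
The plan is to verify that the minimal nonnegative solution $\{\vc{R}^{(n)}; n \in \bbN\}$ of the matrix equation (\ref{ricachi:eq}) coincides with the fixed point produced by iterating the maps $R_n$, and then to unfold the composition. First I would observe that, since $\vc{Q}^{(n-1)}_0$ has all rows zero except the last one (which equals $(0,\dots,0,\lambda p)$), the product $\vc{R}^{(n)}\vc{R}^{(n+1)}\vc{Q}^{(n+1)}_2$ is well behaved and, more importantly, that the matrix $-\vc{Q}^{(n)}_1 - \vc{R}^{(n+1)}\vc{Q}^{(n+1)}_2$ is a nonsingular M-matrix: its off-diagonal entries are nonpositive (the off-diagonal entries of $\vc{Q}^{(n)}_1$ are the nonnegative rates $\lambda$ and $\nu_i$, while $\vc{R}^{(n+1)}\vc{Q}^{(n+1)}_2$ is nonnegative, so subtracting makes everything work out with the right signs), and it is strictly diagonally dominant because the defect comes from the generator structure together with the strictly negative contribution $-n\mu$ on the diagonal of $\vc{Q}^{(n)}_1$. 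Hence the inverse exists and is nonnegative, so $R_n(\vc{X})$ is well defined and nonnegative for every nonnegative $\vc{X}$.

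Next I would rearrange (\ref{ricachi:eq}). Moving the first term to the other side and factoring $\vc{R}^{(n)}$ out of the remaining two terms gives
\begin{align*}
\vc{R}^{(n)}\left(\vc{Q}^{(n)}_1 + \vc{R}^{(n+1)}\vc{Q}^{(n+1)}_2\right) = -\vc{Q}^{(n-1)}_0,
\end{align*}
and multiplying on the right by the inverse of $-\vc{Q}^{(n)}_1 - \vc{R}^{(n+1)}\vc{Q}^{(n+1)}_2$ yields exactly $\vc{R}^{(n)} = \vc{Q}^{(n-1)}_0\left(-\vc{Q}^{(n)}_1 - \vc{R}^{(n+1)}\vc{Q}^{(n+1)}_2\right)^{-1} = R_n(\vc{R}^{(n+1)})$, which is the first equality in (\ref{backward:for}). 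The second equality is then obtained simply by substituting the same identity at level $n+1$ into the argument of $R_n$, i.e. $\vc{R}^{(n)} = R_n(\vc{R}^{(n+1)}) = R_n(R_{n+1}(\vc{R}^{(n+2)})) = R_n\circ R_{n+1}(\vc{R}^{(n+2)})$, and iterating this $k$ times gives $\vc{R}^{(n)} = R_n\circ R_{n+1}\circ\cdots\circ R_{n+k}(\vc{R}^{(n+k+1)})$ for every $k$.

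The remaining and genuinely substantive point is to justify passing to the limit $k\to\infty$ and writing the bare infinite composition $R_n\circ R_{n+1}\circ\cdots$ without an argument. For this I would appeal to the structure of level-dependent QBD processes established in~\cite{Ramaswami_Taylor96}: because $\{\vc{R}^{(n)}\}$ is the \emph{minimal} nonnegative solution, one can start the iteration from the zero matrix and show by induction (using the monotonicity of each $R_n$ on the cone of nonnegative matrices, which follows from the nonnegativity of the resolvent $(-\vc{Q}^{(n)}_1 - \vc{X}\vc{Q}^{(n+1)}_2)^{-1}$ and its monotone dependence on $\vc{X}$) that the truncated compositions $R_n\circ\cdots\circ R_{n+k}(\vc{O})$ increase monotonically and are bounded above by $\vc{R}^{(n)}$; minimality forces the limit to equal $\vc{R}^{(n)}$. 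I expect the main obstacle to be exactly this convergence/minimality argument — verifying monotonicity of the maps $R_n$ and identifying the limit of the iterates with the minimal solution rather than some larger solution — whereas the algebraic manipulation recovering $R_n$ from (\ref{ricachi:eq}) is routine. Since the proposition is stated as a consequence of (\ref{ricachi:eq}) and~\cite{Ramaswami_Taylor96}, in the written proof this convergence step may be cited rather than reproved in full.
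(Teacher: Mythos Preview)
The paper does not actually supply a proof of this proposition: it is stated without a proof environment, the justification being the phrase ``according to (\ref{ricachi:eq})'' embedded in the statement itself, with the underlying analysis deferred to the reference~\cite{phung-duc10b}. Your proposal therefore supplies far more detail than the paper does, and the core of it --- rearranging (\ref{ricachi:eq}) to isolate $\vc{R}^{(n)}$ and then iterating --- is exactly the intended reading.

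Two remarks on your write-up. First, the paper separates the convergence question from this proposition: the fact that the truncated compositions $R_n\circ\cdots\circ R_{n+k-1}(\vc{O})$ converge to $\vc{R}^{(n)}$ is recorded as the \emph{next} proposition (Proposition~\ref{prop:rn}), cited from~\cite{phung-duc10b}. So the ``genuinely substantive point'' you flag is indeed handled by citation rather than reproved, and your monotonicity/minimality sketch is the standard argument underlying that cited result. Second, your diagonal-dominance justification for invertibility is slightly loose in the last row when $q=r=1$: in that regime the excess in row $K$ can vanish, so strict diagonal dominance alone does not close the argument. The cleaner route is to note that $-(\vc{Q}^{(n)}_1 + \vc{R}^{(n+1)}\vc{Q}^{(n+1)}_2)$ is the negative of a defective generator (it is $-\widehat{\vc{Q}}^{(n)}$ in the paper's later notation, the censored block at level $n$ before adding back $\vc{Q}^{(n)}_2$), and such matrices are automatically nonsingular M-matrices; but since the paper simply asserts the formula, this level of care exceeds what is required here.
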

Proposition~\ref{prop:recursive} shows that $\vc{R}^{(n)}$ can be viewed as an {\em infinite} matrix continued fraction. 
The following proposition provides a sequence of matrices that converges to $R^{(n)}$.
\begin{prop}[Proposition 2.4 in Phung-Duc {\it et al}. (2010)~\cite{phung-duc10b}]
\label{prop:rn}
If we define the matrix sequence $\{\vc{R}^{(n)}_{k}; k \in \bbZ_+ \}$ by
\begin{align*}
	\vc{R}^{(n)}_{0} & = \vc{O}, \qquad k = 0, \\
	\vc{R}^{(n)}_{k} & = R_{n}(\vc{R}^{(n+1)}_{k-1}) = \cdots = R_{n} \circ R_{n+1} \circ \cdots \circ R_{n+k-1}( \vc{O}), \qquad k, n \in \bbN, 
\end{align*}   
%
then we have
\begin{equation*}
\lim_{k \to \infty}\vc{R}^{(n)}_{k} = \vc{R}^{(n)}, \qquad n \in \bbN.
\end{equation*}
\end{prop}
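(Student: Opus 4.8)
The plan is to run the classical monotone-convergence argument for fixed-point iterations of Riccati type. Throughout, let $\preceq$ denote the entrywise partial order on real square matrices of order $K+1$. The structural facts I will rely on are: $\vc{Q}^{(n-1)}_0\succeq\vc{O}$ and $\vc{Q}^{(n+1)}_2\succeq\vc{O}$; $-\vc{Q}^{(n)}_1-\vc{X}\vc{Q}^{(n+1)}_2$ is a Z-matrix for every $\vc{X}\succeq\vc{O}$; and $-\vc{Q}^{(n)}_1-\vc{R}^{(n+1)}\vc{Q}^{(n+1)}_2$ is a nonsingular M-matrix, which is the standard consequence of ergodicity of $\{X(t);t\ge 0\}$ together with (\ref{ricachi:eq}). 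Since $\vc{R}^{(n+1)}\vc{Q}^{(n+1)}_2\succeq\vc{O}$, the matrix $-\vc{Q}^{(n)}_1$ itself dominates that M-matrix entrywise, hence is also a nonsingular M-matrix; more generally, for any $\vc{X}$ with $\vc{O}\preceq\vc{X}\preceq\vc{R}^{(n+1)}$ the Z-matrix $-\vc{Q}^{(n)}_1-\vc{X}\vc{Q}^{(n+1)}_2$ lies entrywise between the two nonsingular M-matrices $-\vc{Q}^{(n)}_1-\vc{R}^{(n+1)}\vc{Q}^{(n+1)}_2$ and $-\vc{Q}^{(n)}_1$, so it too is a nonsingular M-matrix with nonnegative inverse. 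Thus $R_n$ is well defined and nonnegative on the order interval $[\vc{O},\vc{R}^{(n+1)}]$, and it is monotone nondecreasing there, because for Z-matrices $\vc{A}\succeq\vc{B}$ one has $\vc{A}^{-1}\preceq\vc{B}^{-1}$, and left multiplication by $\vc{Q}^{(n-1)}_0\succeq\vc{O}$ preserves $\preceq$.

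First I would prove, by a simultaneous induction on $k$ over all $n\in\bbN$, the chain $\vc{O}\preceq\vc{R}^{(n)}_{k-1}\preceq\vc{R}^{(n)}_{k}\preceq\vc{R}^{(n)}$ for every $k\in\bbN$. For $k=1$ this reads $\vc{O}\preceq R_n(\vc{O})\preceq R_n(\vc{R}^{(n+1)})=\vc{R}^{(n)}$, where the last equality is Proposition~\ref{prop:recursive}; the inductive step follows by applying the monotonicity of $R_n$ to the inequalities $\vc{R}^{(n+1)}_{k-2}\preceq\vc{R}^{(n+1)}_{k-1}\preceq\vc{R}^{(n+1)}$. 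Consequently, for each fixed $n$ the sequence $\{\vc{R}^{(n)}_k\}_{k\in\bbZ_+}$ is nondecreasing and bounded above by $\vc{R}^{(n)}$, hence converges entrywise to a limit $\bar{\vc{R}}^{(n)}$ with $\vc{O}\preceq\bar{\vc{R}}^{(n)}\preceq\vc{R}^{(n)}$.

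Next I would pass to the limit in the recursion $\vc{R}^{(n)}_k=R_n(\vc{R}^{(n+1)}_{k-1})$. Since $\bar{\vc{R}}^{(n+1)}\preceq\vc{R}^{(n+1)}$, the matrix $-\vc{Q}^{(n)}_1-\bar{\vc{R}}^{(n+1)}\vc{Q}^{(n+1)}_2$ is a nonsingular M-matrix, hence invertible, so $\vc{X}\mapsto(-\vc{Q}^{(n)}_1-\vc{X}\vc{Q}^{(n+1)}_2)^{-1}$ is continuous at $\bar{\vc{R}}^{(n+1)}$ (matrix inversion being continuous where the determinant does not vanish). Letting $k\to\infty$ gives $\bar{\vc{R}}^{(n)}=R_n(\bar{\vc{R}}^{(n+1)})$, which, after multiplying on the right by $-\vc{Q}^{(n)}_1-\bar{\vc{R}}^{(n+1)}\vc{Q}^{(n+1)}_2$, is exactly $\vc{Q}^{(n-1)}_0+\bar{\vc{R}}^{(n)}\vc{Q}^{(n)}_1+\bar{\vc{R}}^{(n)}\bar{\vc{R}}^{(n+1)}\vc{Q}^{(n+1)}_2=\vc{O}$. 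Hence $\{\bar{\vc{R}}^{(n)}\}_{n\in\bbN}$ is a nonnegative solution of (\ref{ricachi:eq}), and by the minimality of $\{\vc{R}^{(n)}\}_{n\in\bbN}$ we obtain $\vc{R}^{(n)}\preceq\bar{\vc{R}}^{(n)}$; together with $\bar{\vc{R}}^{(n)}\preceq\vc{R}^{(n)}$ this forces $\bar{\vc{R}}^{(n)}=\vc{R}^{(n)}$ for all $n\in\bbN$, which is the claim.

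The step I expect to be the main obstacle is the very first one: controlling $-\vc{Q}^{(n)}_1-\vc{X}\vc{Q}^{(n+1)}_2$ as a nonsingular M-matrix along the whole iteration, so that $R_n$ is simultaneously well defined, nonnegative, monotone, and continuous on $[\vc{O},\vc{R}^{(n+1)}]$. This rests on two facts — that $-\vc{Q}^{(n)}_1-\vc{R}^{(n+1)}\vc{Q}^{(n+1)}_2$ is a nonsingular M-matrix (from ergodicity of $\{X(t);t\ge 0\}$ and the minimality characterization of $\vc{R}^{(n+1)}$), and that a Z-matrix lying entrywise above a nonsingular M-matrix is itself a nonsingular M-matrix. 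Once these are secured, the monotone-convergence and limit-passing parts are routine.
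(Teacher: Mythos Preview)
Your argument is correct and is the standard monotone-convergence proof for the minimal nonnegative solution of level-dependent QBD Riccati recursions. However, the present paper does not actually prove this proposition: it is quoted verbatim as Proposition~2.4 of Phung-Duc \emph{et al.}~(2010)~\cite{phung-duc10b} and no proof is supplied here. So there is nothing to compare against in this paper itself.

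That said, your proof is precisely the one that underlies the cited result (and, more generally, the Ramaswami--Taylor~\cite{Ramaswami_Taylor96} framework): establish that $R_n$ is well defined, nonnegative, and monotone on the order interval $[\vc{O},\vc{R}^{(n+1)}]$ via M-matrix comparison; run the simultaneous induction in $k$ over all levels $n$ to get $\vc{O}\preceq\vc{R}^{(n)}_{k-1}\preceq\vc{R}^{(n)}_{k}\preceq\vc{R}^{(n)}$; pass to the limit by continuity of inversion; and conclude by minimality. Your identification of the ``main obstacle'' is also accurate---the M-matrix control of $-\vc{Q}^{(n)}_1-\vc{X}\vc{Q}^{(n+1)}_2$ on the whole order interval is exactly what makes the iteration well posed---and the two facts you isolate (nonsingularity of $-\vc{Q}^{(n)}_1-\vc{R}^{(n+1)}\vc{Q}^{(n+1)}_2$ from ergodicity, and the Z-/M-matrix comparison principle) are the right ingredients. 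Nothing is missing.
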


From Proposition~\ref{prop:recursive}, we see that the first $K$ rows of $\vc{R}^{(n)}$ must be zeros. Let 
\[ \vc{r}^{(n)} = (r^{(n)}_0,r^{(n)}_1, \dots, r^{(n)}_K) \] 
denote the last row of $\vc{R}^{(n)}$. Comparing the last row of in both sides of (\ref{ricachi:eq}) yields
\begin{align}
	\label{i0:eq}
    b^{(n)}_0 r^{(n)}_0   + \nu_1 r^{(n)}_1 + \widetilde{r}^{(n+1)}_{0} r^{(n)}_K & =  0, & i &=0, \\
	\label{i1c_m_1:eq}
    \lambda r^{(n)}_{i-1}  +  b^{(n)}_i r^{(n)}_i + \nu_{i+1} r^{(n)}_{i+1} + \widetilde{r}^{(n+1)}_{i} r^{(n)}_K & =  0, &	i &=1,2,\dots, K-1,\\
	\label{ic:eq}
    \lambda r^{(n)}_{K-1}  +  \left( b^{(n)}_K + \widetilde{r}^{(n+1)}_{K} \right) r^{(n)}_K  & =  - p \lambda, & i & = K,
\end{align}
where 
\begin{align}
	\widetilde{r}^{(n)}_{0} & =   n \mu \bar{r} r^{(n)}_{0}, &  i & = 0, \nonumber \\
	\widetilde{r}^{(n)}_{i} & =   n \mu r r^{(n)}_{i-1} + n \mu \bar{r} r^{(n)}_{i}, & i & = 1,2,\dots,K-1, \nonumber \\
	\widetilde{r}^{(n)}_{K} & =   n \mu r r^{(n)}_{K-1} + n \mu (\bar{r}+r \bar{q}) r^{(n)}_{K}, & i & = K. \nonumber
\end{align}
\begin{prop} \label{sum:coro}
We have 
\[
    r^{(n)}_0 + r^{(n)}_1 + \cdots + r^{(n)}_{K-1 } + (\bar{r} + r \bar{q}) r^{(n)}_K = \frac{\lambda p}{n \mu}.
\]
\end{prop}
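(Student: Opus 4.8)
The plan is to sum equations (\ref{i0:eq})--(\ref{ic:eq}) over all row indices $i = 0, 1, \dots, K$. When we add the three families of equations, the terms involving $\lambda$, $\nu_i$, and the diagonal coefficients $b^{(n)}_i$ should telescope or cancel in a structured way, because the matrix $\vc{Q}^{(n)}_1$ has zero row sums except in the last row (where the row sum reflects the truncation and the splitting probability $p$). More precisely, I would group the contributions column by column: the off-diagonal entry $\lambda$ in row $i$ and the subdiagonal entry $\nu_{i+1}$ in row $i+1$ together with the diagonal entry $b^{(n)}_i$ combine, and summing over $i$ leaves only boundary contributions plus the "missing" mass in $\vc{Q}^{(n)}_1$ due to the orbit-departure rate $n\mu$ terms.

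First I would carry out the cancellation of the $\lambda$ and $\nu_i$ terms: for $i = 0, \dots, K-1$, the coefficient of $r^{(n)}_i$ coming from its own equation's diagonal, from the $\lambda r^{(n)}_{i-1}$ term in equation $i{+}1$'s appearance (wait — careful with indexing), and from the $\nu_{i+1} r^{(n)}_{i+1}$ in equation $i{-}1$'s appearance, should add up. Concretely, the total coefficient of $r^{(n)}_i$ (for $0 \le i \le K-1$) in the sum is $b^{(n)}_i + \lambda + \nu_i = -(\lambda + n\mu + \nu_i) + \lambda + \nu_i = -n\mu$, where the $+\lambda$ comes from equation $(i{+}1)$'s $\lambda r^{(n)}_i$ term (for $i \le K-1$) and the $+\nu_i$ comes from equation $(i{-}1)$'s $\nu_i r^{(n)}_i$ term (for $i \ge 1$); the edge cases $i=0$ (no $\nu_0$ term anyway since $\nu_0 = 0$) and $i = K-1$ work out consistently. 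For $i = K$, the total coefficient of $r^{(n)}_K$ is $b^{(n)}_K + \widetilde{\cdot}$-free part plus $\lambda$ from... no: in equation (\ref{ic:eq}) the coefficient is $b^{(n)}_K + \widetilde{r}^{(n+1)}_K$, but the $\widetilde{r}$ terms I will handle separately, and there is no $\lambda r^{(n)}_K$ elsewhere, so the "plain" coefficient of $r^{(n)}_K$ is $b^{(n)}_K = -(\lambda p + n\mu(\bar r + r\bar q) + \nu_K)$, to which we add $\lambda$ from equation $(K)$'s own $\lambda r^{(n)}_{K-1}$? No — that's the coefficient of $r^{(n)}_{K-1}$. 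The careful bookkeeping is exactly the step I expect to be the main obstacle.

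Next, the $\widetilde{r}^{(n+1)}_i$ terms: summing $\widetilde{r}^{(n+1)}_0 + \sum_{i=1}^{K-1}\widetilde{r}^{(n+1)}_i + \widetilde{r}^{(n+1)}_K$, note these all carry a factor $r^{(n)}_K$ and a factor $(n+1)\mu$ — but wait, they appear as $\widetilde{r}^{(n+1)}_i r^{(n)}_K$, so I would factor out $r^{(n)}_K$ and observe $\sum_i \widetilde{r}^{(n+1)}_i = (n{+}1)\mu\,[\,\bar r\sum_i r^{(n+1)}_i + r\sum_i r^{(n+1)}_i + \dots]$; in fact this is precisely $(n{+}1)\mu$ times the row sum of $\vc{Q}^{(n+1)}_2 / ((n{+}1)\mu)$ applied to $\vc{r}^{(n+1)}$, which by the structure of $\vc{Q}^{(n+1)}_2$ equals $(n{+}1)\mu\bigl(\sum_{j=0}^{K-1} r^{(n+1)}_j + (\bar r + r\bar q) r^{(n+1)}_K\bigr)$. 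This suggests the identity is best proved by \emph{induction on $n$} (or rather, it becomes self-referential and must be closed): if we denote $S^{(n)} = r^{(n)}_0 + \dots + r^{(n)}_{K-1} + (\bar r + r\bar q)r^{(n)}_K$, the summed equation should read $-n\mu S^{(n)} + (\text{something}) = -\lambda p$, and the "something" involving $S^{(n+1)} r^{(n)}_K$ will need to vanish or fold in. Actually, re-examining: the cleanest route is to multiply equation (\ref{ricachi:eq})'s last row on the right by $\vc{e}$ (the all-ones column vector) and exploit $\vc{Q}^{(n+1)}_2 \vc{e}$ and $\vc{Q}^{(n)}_1\vc{e}$ and $\vc{Q}^{(n-1)}_0\vc{e}$ directly: $\vc{Q}^{(n-1)}_0\vc{e} = \lambda p\,\vc{e}_K$ (last unit vector scaled), $\vc{Q}^{(n)}_1\vc{e} = -n\mu\,\vc{e} - \lambda\bar p\,\vc{e}_K - (\text{more at row }K)$, and $\vc{Q}^{(n+1)}_2\vc{e} = (n{+}1)\mu\,\vc{e} - (n{+}1)\mu r\bar q\,\vc{e}_K$. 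Taking the last-row component of $\vc{Q}^{(n-1)}_0 + \vc{R}^{(n)}\vc{Q}^{(n)}_1 + \vc{R}^{(n)}\vc{R}^{(n+1)}\vc{Q}^{(n+1)}_2 = \vc{O}$ multiplied by $\vc{e}$, and using that only the last row $\vc{r}^{(n)}$ of $\vc{R}^{(n)}$ is nonzero (so $\vc{R}^{(n)}\vc{R}^{(n+1)} = r^{(n)}_K \cdot (\text{matrix with last row } \vc{r}^{(n+1)})$, i.e. its last row is $r^{(n)}_K \vc{r}^{(n+1)}$), I would get $\lambda p + \vc{r}^{(n)}(\vc{Q}^{(n)}_1\vc{e}) + r^{(n)}_K\,\vc{r}^{(n+1)}(\vc{Q}^{(n+1)}_2\vc{e}) = 0$.

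The main obstacle is then simply verifying that $\vc{r}^{(n+1)}(\vc{Q}^{(n+1)}_2\vc{e}) = 0$ — which holds because $\vc{Q}^{(n+1)}_2\vc{e} = \vc{0}$? Let me check: the row sums of $\vc{Q}^{(n)}_2$ are $n\mu\bar r + n\mu r = n\mu$ for the first $K$ rows and $n\mu(\bar r + r\bar q)$ for the last row — not zero. So $\vc{Q}^{(n+1)}_2\vc{e} \ne \vc{0}$ in general; rather $\vc{Q}^{(n)}_2 \vc{e} = n\mu\vc{e} - n\mu r\bar q\, \vc{e}_K$. Hmm, so the term does not vanish, and the identity genuinely couples level $n$ to level $n{+}1$. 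This means I should instead prove the claim by \textbf{downward induction / a fixed-point argument} or, more likely, the intended proof sums the scalar equations (\ref{i0:eq})--(\ref{ic:eq}) directly and the $\widetilde{r}^{(n+1)}$ contributions miraculously telescope against something — but given the factor $r^{(n)}_K$ multiplying them, the honest conclusion is that summing (\ref{i0:eq})--(\ref{ic:eq}) with unit weights gives $-n\mu S^{(n)} - \lambda\bar p\, r^{(n)}_K + \big(\sum_i \widetilde r^{(n+1)}_i\big) r^{(n)}_K = -\lambda p$... and for the stated clean identity $n\mu S^{(n)} = \lambda p$ to hold, we would need $\big(\sum_i \widetilde r^{(n+1)}_i\big) r^{(n)}_K = \lambda \bar p\, r^{(n)}_K$, i.e. $\sum_i \widetilde r^{(n+1)}_i = \lambda\bar p$ whenever $r^{(n)}_K \ne 0$. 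Establishing that auxiliary fact — presumably via the column-sum / probabilistic balance interpretation of the censored chain, or via equation (\ref{ic:eq}) rearranged — is the crux, and I would either (a) use a clever weighting in the summation that makes the $n{+}1$ terms reproduce $S^{(n+1)}$ and then invoke the same identity at level $n{+}1$ inductively (with a base case from ergodicity forcing $r^{(n)} \to 0$, hence $S^{(n)} \to 0$, which is inconsistent with $n\mu S^{(n)} = \lambda p$ unless... — so induction is subtle), or (b) interpret $\vc{\pi}_{n-1}\vc{Q}^{(n-1)}_0\vc{e} = \vc{\pi}_n(-\vc{Q}^{(n)}_1 - \vc{Q}^{(n)}_2)\vc{e}$-type flow-balance across the cut between levels $\le n{-}1$ and $\ge n$ in the original Markov chain, which directly yields a telescoping identity whose per-level form is exactly the claimed Proposition. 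I would pursue route (b): writing the global balance equation for the set of states with orbit size $\ge n$, namely $\lambda p\, \pi_{K,n-1} = $ (net rate out of orbit from level $n$), dividing by $\pi_{K,n-1}$ and using $\pi_{i,n} = \pi_{K,n-1} r^{(n)}_i$ (since $\vc{\pi}_n = \vc{\pi}_{n-1}\vc{R}^{(n)}$ and only the last column-block... i.e. last row of $\vc{R}^{(n)}$ is nonzero, so $\pi_{i,n} = \pi_{K,n-1} r^{(n)}_i$), gives $\lambda p = n\mu(\sum_{i=0}^{K-1} r^{(n)}_i + (\bar r + r\bar q) r^{(n)}_K)$ after identifying the total orbit-departure rate (retrials that succeed plus abandonments) from level-$n$ states as $n\mu\sum_{i<K} r^{(n)}_i$ for non-full states (customer leaves orbit and enters service, or abandons) plus $n\mu(\bar r + r\bar q) r^{(n)}_K$ for the full state (abandonment, not re-entering). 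That is the identity.
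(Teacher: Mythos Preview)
Your route (b) is correct and is essentially the paper's argument. The paper phrases it algebraically: the censored generator on levels $0,\dots,n-1$ has zero row sums, so the last row of $\vc{Q}^{(n-1)}_2 + \vc{Q}^{(n-1)}_1 + \vc{R}^{(n)}\vc{Q}^{(n)}_2$ applied to $\vc{e}$ vanishes; since the last row of $\vc{Q}^{(n-1)}_2 + \vc{Q}^{(n-1)}_1$ sums to $-\lambda p$ and $\vc{Q}^{(n)}_2\vc{e}$ has entries $n\mu$ (rows $0,\dots,K-1$) and $n\mu(\bar r + r\bar q)$ (row $K$), this gives the identity directly. Your flow-balance across the cut between levels $\le n-1$ and $\ge n$, followed by $\pi_{i,n}=\pi_{K,n-1}r^{(n)}_i$, is the probabilistic reading of exactly the same computation. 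Your earlier attempt to sum (\ref{i0:eq})--(\ref{ic:eq}) was bound to couple levels $n$ and $n{+}1$, as you noticed, because that system encodes the Riccati relation at level $n$ rather than the cut at $n{-}1/n$; the censoring/cut viewpoint is precisely what removes the $(n{+}1)$-dependence.
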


\begin{proof}
This proposition follows from the fact that the following matrix represents the infinitesimal generator of 
the ergodic Markov chain $\{X(t); t \geq 0 \}$ censored in levels $\{ l(i); i = 0,1,\dots, n-1\}$, where $l(i) = ((0,i),(1,i),\dots,(K,i))$.
\begin{eqnarray}
\vc{Q}^{\leq n-1}
&=&
\left (
\begin{array}{llllll}
	\vc{Q}^{(0)}_1 \  & \vc{Q}^{(0)}_0  \  & \vc{O}  \  &  \cdots \ & \vc{O} \
	\\
	\vc{Q}^{(1)}_2 & \vc{Q}^{(1)}_1   & \vc{Q}^{(1)}_0 & \ddots &  \vc{O}
	\\
	\vc{O}   & \vc{Q}^{(2)}_2 & \vc{Q}^{(2)}_1 & \ddots &  \vdots
	\\
	\vdots   &  \vc{O}    & \ddots & \ddots  & \vc{O} 
	\\
	\vdots   & \ddots     & \ddots   & \vc{Q}^{(n-2)}_2  & \vc{Q}^{(n-2)}_0	\\
	\vc{O}   & \cdots     & \vc{O}   & \vc{Q}^{(n-1)}_2  & \widehat{\vc{Q}}^{(n-1)}
\end{array}
\right ), \label{sensored:chain}
\end{eqnarray}
where 
\[
	\widehat{\vc{Q}}^{(n-1)}  = \vc{Q}^{(n-1)}_1 + \vc{R}^{(n)} \vc{Q}^{(n)}_2.
\]
Therefore, 
\[
	(\vc{Q}^{(n-1)}_2 + \widehat{\vc{Q}}^{(n-1)}) \vc{e} = \vc{0}. 
\] 
By comparing the last elements of both sides, we obtain the announced result.
\end{proof}
\begin{remark}
Proposition~\ref{sum:coro} is the key for the derivation of the series expansion for $r^{(n)}_i$ ($i = 0,1,\dots,K-1,K$). 
The cases $p = q = r = 1$ and $r = 1$ have been presented as the key lemma in~\cite{Binliu11} and~\cite{Binliu10}, respectively.
\end{remark}

\begin{remark}
Liu et al.~\cite{Binliu11} use (\ref{sum:coro}) and (\ref{i0:eq}) to obtain explicit expressions for $r^{(n)}_0$ and $r^{(n)}_1$ of a single server retrial queue with one type of nonpersistent customer, i.e., $r=1$ and $K=c=1$. 
However, we observe here that if $r \neq 1$, such explicit formulae cannot be obtained. This implies an essential difference between the model with one type of nonpersistent customer~\cite{Binliu11} and ours.
\end{remark}

\section{Main Results}\label{perturbation:sec}
In this section, we present a perturbation analysis for all the elements of the rate matrices. In particular, 
we derive Taylor series expansion for $r^{(n)}_i$ ($i=0,1,\dots,K$) with respect to $1/n$. The case $\bar{r} + r \bar{q} > 0$ and 
the case $q = r = 1$ are essentially different. Thus, we present the former and the latter separately in Section~\ref{first_case:sec} 
and Section~\ref{second_case:sec}, respectively.

%
\subsection{The case $\bar{r} + r \bar{q} > 0$}\label{first_case:sec}
In this section, we explain the process for obtaining Taylor series expansion for $r^{(n)}_{K-k}$ ($k=0,1,\dots,K$) 
in details. We drive the first three terms in the expansion of $r^{(n)}_{K-k}$ step by step before going to the general result.
In what follows, we use the sequences $\{ \gamma^{(k)}_n; n \in \bbZ_+ \}$ where $k$ implies the number of idle servers with 
the convention that $\gamma^{(k)}_n = 0$ if $k > K$. We define $o(x)$ as $\lim_{x \to 0} o(x)/x = 0$. 
\subsubsection{One term expansion}
\begin{lem}\label{eva_rate_vector1:lem}
We have $\lim_{n \to \infty} n^{k+1} r^{(n)}_i = 0$ for $i=0,1,\dots,K-k-1$ and 
\begin{equation}\label{first_expand:eq}
	r^{(n)}_{K-k} = \gamma^{(k)}_1 \frac{1}{n^{k+1}} + o(\frac{1}{n^{k+1}}), \qquad k=0,1,\dots,K,
\end{equation}
where 
\[
   \gamma^{(0)}_1 = \frac{\lambda p}{\mu(\bar{r} + r \bar{q})}, \qquad 
   \gamma^{(k)}_1 = \frac{\nu_{K-k+1}}{\mu} \gamma^{(k-1)}_1, \qquad k=1,2,\dots,K.
\]
\end{lem}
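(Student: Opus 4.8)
The plan is to prove the equivalent pair of limit statements
\[
\lim_{n\to\infty} n^{k+1} r^{(n)}_{K-k} = \gamma^{(k)}_1,
\qquad
\lim_{n\to\infty} n^{k+1} r^{(n)}_i = 0 \ \ (0 \le i \le K-k-1),
\]
by induction on $k = 0,1,\dots,K$; these are exactly the assertions of the lemma once (\ref{first_expand:eq}) is unpacked with the $o(\cdot)$-notation. The one preliminary fact I would record first is that, since $\vc{R}^{(n)}$ is nonnegative, Proposition~\ref{sum:coro} forces $0 \le r^{(n)}_i \le \lambda p/(n\mu(\bar r + r\bar q))$ for every $i$; hence $r^{(n)}_i = O(1/n)$ and $\widetilde r^{(n)}_i = O(1)$, which is the crude bound from which everything bootstraps.

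For the base case $k=0$ I would read $r^{(n)}_K$ off of (\ref{ic:eq}): by the crude bound $b^{(n)}_K + \widetilde r^{(n+1)}_K = -n\mu(\bar r + r\bar q) + O(1)$, so (\ref{ic:eq}) becomes $n\mu(\bar r + r\bar q)\, r^{(n)}_K = \lambda p + \lambda r^{(n)}_{K-1} + O(1/n) = \lambda p + o(1)$, giving $n r^{(n)}_K \to \gamma^{(0)}_1$. Then Proposition~\ref{sum:coro} yields $\sum_{i=0}^{K-1} r^{(n)}_i = \lambda p/(n\mu) - (\bar r + r\bar q) r^{(n)}_K = o(1/n)$, and nonnegativity of each summand gives $n r^{(n)}_i \to 0$ for $i \le K-1$.

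For the inductive step I would assume the two statements for $0,1,\dots,k-1$. To obtain the leading constant of $r^{(n)}_{K-k}$ I would use (\ref{i1c_m_1:eq}) with $i = K-k$ when $1 \le k \le K-1$, and (\ref{i0:eq}) when $k = K$; isolating the diagonal term gives, with the conventions $r^{(n)}_{-1}=0$ and $\nu_{K+1}$ absent,
\[
(\lambda + n\mu + \nu_{K-k})\, r^{(n)}_{K-k}
= \lambda r^{(n)}_{K-k-1} + \nu_{K-k+1} r^{(n)}_{K-k+1} + \widetilde r^{(n+1)}_{K-k} r^{(n)}_K.
\]
Multiplying by $n^k$: the left side is $\mu\, n^{k+1} r^{(n)}_{K-k}(1+o(1))$; on the right, $n^k r^{(n)}_{K-k-1}\to 0$ and $n^k r^{(n)}_{K-k+1}\to\gamma^{(k-1)}_1$ by the hypothesis, while $\widetilde r^{(n+1)}_{K-k} = (n+1)\mu\bigl[r\,r^{(n+1)}_{K-k-1} + \bar r\, r^{(n+1)}_{K-k}\bigr] = o(n^{1-k})$ forces $n^k \widetilde r^{(n+1)}_{K-k} r^{(n)}_K = o(1)$; hence $n^{k+1} r^{(n)}_{K-k} \to (\nu_{K-k+1}/\mu)\gamma^{(k-1)}_1 = \gamma^{(k)}_1$. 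To sharpen the remaining entries from the $o(n^{-k})$ bound of the hypothesis to $o(n^{-(k+1)})$ I would cascade downward through (\ref{i1c_m_1:eq}) (and (\ref{i0:eq}) at $i=0$): writing $(\lambda + n\mu + \nu_i)\, r^{(n)}_i = \lambda r^{(n)}_{i-1} + \nu_{i+1} r^{(n)}_{i+1} + \widetilde r^{(n+1)}_i r^{(n)}_K$ for $i = K-k-1$ down to $0$, the term $r^{(n)}_{i+1}$ is already known to be $O(n^{-(k+1)})$ (at the top, by the step just completed; below it, by the previous cascade step), while $r^{(n)}_{i-1} = o(n^{-k})$ by the hypothesis and $\widetilde r^{(n+1)}_i r^{(n)}_K = o(n^{-k})$; dividing by $n\mu$ gives $r^{(n)}_i = o(n^{-(k+1)})$.

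The main obstacle is purely bookkeeping: keeping the $o$/$O$ orders straight through the factor $n$ concealed in $\widetilde r^{(n)}_i$ and through the index-mixing in its definition, and handling the two boundary situations ($i=0$, where (\ref{i0:eq}) replaces (\ref{i1c_m_1:eq}), and $k=K$, which has no second statement to prove). The only structural ingredients beyond the balance equations (\ref{i0:eq})--(\ref{ic:eq}) are Proposition~\ref{sum:coro}, which does double duty in that it both supplies the crude $O(1/n)$ bound and pins down $\gamma^{(0)}_1$, and nonnegativity of $\vc{R}^{(n)}$, which is what lets a smallness bound on a sum of the $r^{(n)}_i$ descend to each term.
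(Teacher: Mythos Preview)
Your argument is correct and follows the same inductive scheme as the paper, using Proposition~\ref{sum:coro} for the crude $O(1/n)$ bound and then bootstrapping through the balance equations (\ref{i0:eq})--(\ref{ic:eq}). The only differences are cosmetic reorderings: in the base case you extract $r^{(n)}_K$ first from (\ref{ic:eq}) and then obtain $n r^{(n)}_i\to0$ for $i<K$ directly from Proposition~\ref{sum:coro} via nonnegativity, whereas the paper cascades through (\ref{i0:eq})--(\ref{i1c_m_1:eq}) first and reads off $r^{(n)}_K$ last; similarly, in the inductive step you pin down $r^{(n)}_{K-k}$ before cascading downward to the lower indices, while the paper does the cascade first and derives $r^{(n)}_{K-k}$ afterward.
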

\begin{proof}
We prove Lemma~\ref{eva_rate_vector1:lem} using mathematical induction. First of all, we show that Lemma~\ref{eva_rate_vector1:lem} is true for $k=0$.
Indeed, from Proposition~\ref{sum:coro}, we see that $\lim_{n \to \infty} r^{(n)}_k = 0$ ($k=0,1,\dots,K$).
It follows from (\ref{i0:eq}) that $\lim_{n \to \infty}  n r^{(n)}_0 = 0$. Equation (\ref{i1c_m_1:eq}) with 
$i = 1,2,\dots,K-1$ yields $\lim_{n \to \infty}  n r^{(n)}_i = 0$ ($k=0,1,\dots,K-1$). 
From $\lim_{n \to \infty}  n r^{(n)}_{K-1} = 0$ and (\ref{ic:eq}), we can show that 
\[ 
	r^{(n)}_K = \frac{\lambda p}{\mu (\bar{r} + r \bar{q})} \frac{1}{n} + o(\frac{1}{n}),
\]  
implying that $\gamma^{(0)}_1 = \frac{\lambda p}{\mu (\bar{r} + r \bar{q})}$ and that Lemma~\ref{eva_rate_vector1:lem} is true 
for $k=0$.

We assume that Lemma~\ref{eva_rate_vector1:lem} is true for $k-1$, i.e., $\lim_{n \to \infty} n^k r^{(n)}_i = 0$ ($i = 0,1,\dots,K-k$) and 
\[
	r^{(n)}_{K-(k-1)} = \gamma^{(k-1)}_1 \frac{1}{n^k} + o (\frac{1}{n^k}), 
\]
for some $k =1,2,\dots,K$. We will prove that Lemma~\ref{eva_rate_vector1:lem} is also true for $k$.

Indeed, we multiply $n^k$ by (\ref{i0:eq}) and (\ref{i1c_m_1:eq}) ($i=1,2,\dots,K-k-1$) and use  
the fact that $\lim_{n \to \infty} n^k r^{(n)}_i = 0$ ($i = 0,1,\dots,K-k$) in order to 
obtain $\lim_{n \to \infty} n^{k+1} r^{(n)}_i = 0$ ($i=0,1,\dots,K-k-1$). We are going to 
derive the first term in the expansion of $r^{(n)}_{K-k}$. Transforming (\ref{i0:eq}) and (\ref{i1c_m_1:eq}) 
($i=1,2,\dots,K-1$) yields
\begin{equation}\label{rK_k:eq}
	r^{(n)}_{K-k} = \frac{\lambda r^{(n)}_{K-k-1}}{n \mu} + \frac{\nu_{K-k+1} r^{(n)}_{K-k+1}}{n \mu} + \frac{\widetilde{r}^{(n+1)}_{K-k} r^{(n)}_K}{n \mu} - \frac{\lambda + \nu_{K-k}}{n \mu} r^{(n)}_{K-k},
\end{equation}
for $k=1,2,\dots,K$, where $r^{(n)}_{-1} = 0$. Equation (\ref{rK_k:eq}) plays a key role in our derivation in this section.

Using the assumptions of mathematical induction leads to
\begin{eqnarray*}
	\frac{\lambda r^{(n)}_{K-k-1}}{n \mu} & = & \frac{\lambda r^{(n)}_{K-k-1} n^k}{n^{k+1} \mu} = o(\frac{1}{n^{k+1}}), \\
	\frac{\nu_{K-k+1} r^{(n)}_{K-k+1}}{n \mu} & = & \frac{\nu_{K-k+1}}{\mu} \gamma^{(k-1)}_1 \frac{1}{n^{k+1}} + o (\frac{1}{n^{k+1}}), \\
	\frac{\widetilde{r}^{(n+1)}_{K-k} r^{(n)}_K}{n \mu} & = & \frac{n+1}{n} \frac{r n^k r^{(n+1)}_{K-k-1}  + \bar{r} n^k r^{(n+1)}_{K-k}  }{n^k} \frac{n r^{(n)}_K}{n} = o(\frac{1}{n^{k+1}}),\\
	\frac{\lambda + \nu_{K-k}}{n \mu} r^{(n)}_{K-k} & = & \frac{\lambda + \nu_{K-k}}{n^{k+1} \mu} r^{(n)}_{K-k} n^k = o(\frac{1}{n^{k+1}}).	
\end{eqnarray*}
Substituting these formulae into (\ref{rK_k:eq}) yields the announced result.
\end{proof}

\begin{remark}
Our derivations in Section~\ref{first_case:sec} are based on mathematical induction which has two steps. The first step is the formula for $r^{(n)}_{K-0}$. The second step is to derive the formula for $r^{(n)}_{K-k}$ provided that for $r^{(n)}_{K-i}$ ($i=0,1,\dots,k-1$) is true.
Proposition~\ref{sum:coro} is important for the first step while formula (\ref{rK_k:eq}) is the key for the second step.
\end{remark}

Lemma~\ref{eva_rate_vector1:lem} can be refined as follows. 
\begin{lem}\label{first_term:lem}
We have
\begin{equation}\label{first_term_refine:eq}
	r^{(n)}_{K-k} = \gamma^{(k)}_1 \frac{1}{n^{k+1}}  + O(\frac{1}{n^{k+2}}),
\end{equation}
where $O(x)$ denotes $\lim_{x \to 0} |O(x)/x| = C \geq 0$.
\end{lem}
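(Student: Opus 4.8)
The plan is to re-run the induction of Lemma~\ref{eva_rate_vector1:lem} on $k$, this time carrying the remainder one order higher. The point is that Lemma~\ref{eva_rate_vector1:lem} already delivers, for every $j=0,1,\dots,K$, the crude bound $r^{(n)}_{K-j}=O(1/n^{j+1})$; what remains is to show that the remainder in the expansion of $r^{(n)}_{K-k}$ is in fact $O(1/n^{k+2})$ and not merely $o(1/n^{k+1})$.

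For the base case $k=0$ I would solve (\ref{ic:eq}) for $r^{(n)}_K$, obtaining
\[
	r^{(n)}_K=\frac{\lambda p+\lambda r^{(n)}_{K-1}}{\lambda p+\nu_K+n\mu(\bar{r}+r\bar{q})-\widetilde{r}^{(n+1)}_{K}}.
\]
By Lemma~\ref{eva_rate_vector1:lem}, $r^{(n)}_{K-1}=O(1/n^2)$, while the defining formula for $\widetilde{r}^{(n+1)}_{K}$ together with $r^{(n+1)}_{K-1}=O(1/n^2)$ and $r^{(n+1)}_K=O(1/n)$ gives $\widetilde{r}^{(n+1)}_{K}=O(1)$. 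Hence the numerator is $\lambda p+O(1/n^2)$ and the denominator is $n\mu(\bar{r}+r\bar{q})\bigl(1+O(1/n)\bigr)$ (here the standing assumption $\bar{r}+r\bar{q}>0$ is used), so that $r^{(n)}_K=\gamma^{(0)}_1/n+O(1/n^2)$, which is (\ref{first_term_refine:eq}) for $k=0$.

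For the inductive step, suppose (\ref{first_term_refine:eq}) holds for $k-1$, i.e.\ $r^{(n)}_{K-k+1}=\gamma^{(k-1)}_1/n^{k}+O(1/n^{k+1})$. I would rewrite (\ref{rK_k:eq}) as
\[
	r^{(n)}_{K-k}\Bigl(1+\frac{\lambda+\nu_{K-k}}{n\mu}\Bigr)=\frac{\lambda r^{(n)}_{K-k-1}}{n\mu}+\frac{\nu_{K-k+1}r^{(n)}_{K-k+1}}{n\mu}+\frac{\widetilde{r}^{(n+1)}_{K-k}r^{(n)}_K}{n\mu},
\]
and estimate the right-hand side term by term with Lemma~\ref{eva_rate_vector1:lem}: the first term is $O(1/n^{k+3})$ (and vanishes when $k=K$, by the convention $r^{(n)}_{-1}=0$); since $r^{(n+1)}_{K-k-1}=O(1/n^{k+2})$ and $r^{(n+1)}_{K-k}=O(1/n^{k+1})$ one gets $\widetilde{r}^{(n+1)}_{K-k}=O(1/n^k)$, so the third term is $O(1/n^{k+2})$; and by the induction hypothesis the middle term equals $(\nu_{K-k+1}/\mu)\gamma^{(k-1)}_1/n^{k+1}+O(1/n^{k+2})$. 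Multiplying through by $\bigl(1+(\lambda+\nu_{K-k})/(n\mu)\bigr)^{-1}=1+O(1/n)$ then yields $r^{(n)}_{K-k}=(\nu_{K-k+1}/\mu)\gamma^{(k-1)}_1/n^{k+1}+O(1/n^{k+2})$, and since $\gamma^{(k)}_1=(\nu_{K-k+1}/\mu)\gamma^{(k-1)}_1$ this is precisely (\ref{first_term_refine:eq}).

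The work here is essentially bookkeeping, so the only thing to watch is that every parasitic contribution — the term carrying $r^{(n)}_{K-k-1}$, the product $\widetilde{r}^{(n+1)}_{K-k}r^{(n)}_K$, and the correction from the prefactor $1+(\lambda+\nu_{K-k})/(n\mu)$ — really lands at order $1/n^{k+2}$ or smaller, so that the coefficient of $1/n^{k+1}$ is fixed solely by the $r^{(n)}_{K-k+1}$ term and reproduces the recursion for $\gamma^{(k)}_1$ from Lemma~\ref{eva_rate_vector1:lem}. One genuine subtlety is that the refined $O$-form of the hypothesis on $r^{(n)}_{K-k+1}$ is needed: feeding in only the $o$-form from Lemma~\ref{eva_rate_vector1:lem} would reproduce just an $o(1/n^{k+1})$ remainder, not $O(1/n^{k+2})$.
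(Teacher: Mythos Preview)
Your proof is correct and follows essentially the same inductive scheme as the paper: the inductive step is the paper's argument verbatim (you merely move the $r^{(n)}_{K-k}$ term to the left as a prefactor, whereas the paper leaves it on the right and bounds it by $O(1/n^{k+2})$ directly). The only genuine difference is the base case $k=0$: the paper invokes Proposition~\ref{sum:coro} to write $r^{(n)}_K=\gamma^{(0)}_1/n-(\bar r+r\bar q)^{-1}\sum_{i=0}^{K-1}r^{(n)}_i$ and then reads off the $O(1/n^2)$ remainder from Lemma~\ref{eva_rate_vector1:lem}, while you instead solve the balance equation (\ref{ic:eq}) for $r^{(n)}_K$ and expand the quotient --- both routes are short and equally valid.
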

\begin{proof}
We prove Lemma~\ref{first_term:lem} using mathematical induction. Indeed, it follows from Proposition~\ref{sum:coro} that
\[
	r^{(n)}_K = \frac{\lambda p}{(\bar{r} + r \bar{q})\mu} \frac{1}{n} - \frac{1}{\bar{r} + r \bar{q}}  \sum_{i=0}^{K-1} r^{(n)}_i.
\]
Using this equation and Lemma~\ref{eva_rate_vector1:lem}, we obtain (\ref{first_term_refine:eq}) with $k=0$.
Assuming that Lemma~\ref{first_term:lem} is true for $r^{(n)}_{K-i}$ ($i= 0,1,\dots,k-1$), we prove that Lemma~\ref{eva_rate_vector1:lem} is true for $r^{(n)}_{K-k}$. 

%
From (\ref{first_expand:eq}), we see that the first, the third and the forth terms in the left hand side (\ref{rK_k:eq}) is 
in order $O(1/n^{k+2})$. Furthermore, from (\ref{rK_k:eq}) and the assumption of mathematical induction  
\[ 
    r^{(n)}_{K-k+1} = \gamma^{(k-1)}_{1} \frac{1}{n^k} + O(\frac{1}{n^{k+1}}),   
\]
we obtain 
\[
	r^{(n)}_{K-k} = \gamma^{(k-1)}_{1} \frac{\nu_{K-k+1}}{\mu} \frac{1}{n^{k+1}} + O(\frac{1}{n^{k+2}}),
\]
which implies (\ref{first_term_refine:eq}).
\end{proof}

\subsubsection{Two term expansion}
\begin{lem}\label{second_term_expansion:lem}
\begin{equation}\label{second_term_expansion:eq}
	r^{(n)}_{K-k} = \gamma^{(k)}_1 \frac{1}{n^{k+1}} - \gamma^{(k)}_2 \frac{1}{n^{k+2}} + O(\frac{1}{n^{k+3}}), 
\end{equation}
where 
\begin{eqnarray}
	\label{two_term_expand:initial}
	\gamma^{(0)}_2 & = & \frac{\nu_K \lambda p}{\mu^2 (\bar{r} + r \bar{q})^2}, \nonumber \\
	\gamma^{(k)}_2 & = & \frac{\nu_{K-k+1}}{\mu} \gamma^{(k-1)}_2 + \left( \frac{\lambda + \nu_{K-k}}{\mu} 
						- \frac{\lambda p \bar{r}}{\mu (\bar{r} + r \bar{q})} \right) \gamma^{(k)}_1, \qquad k = 1,2,\dots,K. \qquad
\end{eqnarray}
\end{lem}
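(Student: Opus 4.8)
The plan is to follow exactly the two-step induction that proved Lemmas~\ref{eva_rate_vector1:lem} and~\ref{first_term:lem}: first settle the base case $k=0$ from Proposition~\ref{sum:coro}, then propagate the two-term expansion down the chain $r^{(n)}_K \rightarrow r^{(n)}_{K-1} \rightarrow \cdots \rightarrow r^{(n)}_0$ by means of the key recursion (\ref{rK_k:eq}), feeding in the already established refined one-term bound (\ref{first_term_refine:eq}) wherever a lower-order term appears.

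For the base case I would rewrite Proposition~\ref{sum:coro} as $(\bar{r} + r\bar{q}) r^{(n)}_K = \frac{\lambda p}{n\mu} - \sum_{i=0}^{K-1} r^{(n)}_i$. By Lemma~\ref{first_term:lem} each summand with $i \le K-2$ is $O(1/n^3)$ while $r^{(n)}_{K-1} = \gamma^{(1)}_1/n^2 + O(1/n^3)$, so $(\bar{r} + r\bar{q}) r^{(n)}_K = \frac{\lambda p}{n\mu} - \gamma^{(1)}_1/n^2 + O(1/n^3)$; dividing by $\bar{r} + r\bar{q}$ and using $\gamma^{(1)}_1 = \frac{\nu_K}{\mu}\gamma^{(0)}_1$ yields $\gamma^{(0)}_2 = \frac{\nu_K\lambda p}{\mu^2(\bar{r} + r\bar{q})^2}$, as in (\ref{two_term_expand:initial}).

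For the inductive step, assume (\ref{second_term_expansion:eq}) for $r^{(n)}_{K-i}$ with $i=0,\dots,k-1$ and estimate the four terms on the right of (\ref{rK_k:eq}) up to order $1/n^{k+2}$. The term $\lambda r^{(n)}_{K-k-1}/(n\mu)$ is $O(1/n^{k+3})$ by Lemma~\ref{first_term:lem}. The term $\nu_{K-k+1} r^{(n)}_{K-k+1}/(n\mu)$ contributes $\gamma^{(k)}_1/n^{k+1} - \frac{\nu_{K-k+1}}{\mu}\gamma^{(k-1)}_2/n^{k+2} + O(1/n^{k+3})$, using the inductive hypothesis together with $\frac{\nu_{K-k+1}}{\mu}\gamma^{(k-1)}_1 = \gamma^{(k)}_1$. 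Writing $\widetilde{r}^{(n+1)}_{K-k} = (n+1)\mu r\, r^{(n+1)}_{K-k-1} + (n+1)\mu\bar{r}\, r^{(n+1)}_{K-k}$ (with the convention $r^{(n)}_{-1}=0$), the factor $\widetilde{r}^{(n+1)}_{K-k}/(n\mu)$ equals $\bar{r}\gamma^{(k)}_1/n^{k+1} + O(1/n^{k+2})$ — the $r^{(n+1)}_{K-k-1}$ part being $O(1/n^{k+2})$ and $(n+1)^{-(k+1)}$ collapsing to $n^{-(k+1)}$ up to lower order — so multiplying by $r^{(n)}_K = \gamma^{(0)}_1/n + O(1/n^2)$ and using $\gamma^{(0)}_1 = \frac{\lambda p}{\mu(\bar{r} + r\bar{q})}$ gives $\frac{\lambda p\bar{r}}{\mu(\bar{r} + r\bar{q})}\gamma^{(k)}_1/n^{k+2} + O(1/n^{k+3})$. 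Finally $\frac{\lambda + \nu_{K-k}}{n\mu}\, r^{(n)}_{K-k} = \frac{\lambda + \nu_{K-k}}{\mu}\gamma^{(k)}_1/n^{k+2} + O(1/n^{k+3})$ by Lemma~\ref{first_term:lem}. Combining these (minding the minus sign on the last term of (\ref{rK_k:eq})) leaves $\gamma^{(k)}_1/n^{k+1}$ at the leading order and, at order $1/n^{k+2}$, the coefficient $-\frac{\nu_{K-k+1}}{\mu}\gamma^{(k-1)}_2 + \frac{\lambda p\bar{r}}{\mu(\bar{r} + r\bar{q})}\gamma^{(k)}_1 - \frac{\lambda + \nu_{K-k}}{\mu}\gamma^{(k)}_1$, which equals $-\gamma^{(k)}_2$ by (\ref{two_term_expand:initial}); this establishes (\ref{second_term_expansion:eq}) for $r^{(n)}_{K-k}$ and closes the induction.

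I expect the only delicate point to be the order bookkeeping rather than any structural difficulty: one must consistently use the refined estimate (\ref{first_term_refine:eq}) — not merely Lemma~\ref{eva_rate_vector1:lem} — for $r^{(n)}_{K-k}$, $r^{(n)}_{K-k-1}$ and $r^{(n)}_K$, and expand $(n+1)/n$ and $(n+1)^{-(k+1)}$ one order further, so that every remainder is genuinely $O(1/n^{k+3})$ and not just $O(1/n^{k+2})$. The boundary indices fit the same computation unchanged: for $k=1$ the $\nu_{K-k+1}$-term simply reuses the base case, and for $k=K$ the convention $r^{(n)}_{-1}=0$ removes the $r$-part of $\widetilde{r}^{(n+1)}_0$, leaving the estimates identical in form.
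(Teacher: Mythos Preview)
Your proposal is correct and follows essentially the same route as the paper: base case $k=0$ from Proposition~\ref{sum:coro} combined with the refined one-term bound (\ref{first_term_refine:eq}), and the inductive step via the recursion (\ref{rK_k:eq}), feeding in (\ref{first_term_refine:eq}) for $r^{(n)}_{K-k-1}$, $r^{(n)}_{K-k}$, $r^{(n)}_K$ and the two-term inductive hypothesis for $r^{(n)}_{K-k+1}$. The only cosmetic difference is that the paper splits the product term as $(\widetilde{r}^{(n+1)}_{K-k}/\mu)\cdot(r^{(n)}_K/n)$ rather than your $(\widetilde{r}^{(n+1)}_{K-k}/(n\mu))\cdot r^{(n)}_K$, but the estimates and the resulting coefficient $\bar{r}\gamma^{(0)}_1\gamma^{(k)}_1$ are identical.
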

\begin{proof}
It follows from Proposition~\ref{sum:coro} that
\begin{eqnarray}\label{rK_other:eq}
	r^{(n)}_K & = & \frac{\lambda p}{\mu(\bar{r} + r \bar{q})n} - \frac{1}{\bar{r} + r \bar{q}} \sum_{i=0}^{K-1} r^{(n)}_i \nonumber \\ 
             & =  & \frac{\lambda p}{\mu(\bar{r} + r \bar{q})} \frac{1}{n} - \frac{\nu_K \lambda p}{\mu^2 (\bar{r} + r \bar{q})^2} \frac{1}{n^2} + O(\frac{1}{n^3}),
\end{eqnarray}
implying the announced result with $k=0$. 
We derive $\gamma^{(k)}_2$ ($k=1,2,\dots,K$). Assuming that (\ref{second_term_expansion:eq}) is 
true for $r^{(n)}_{K-i}$ ($i=0,1,\dots,k-1$), we prove that (\ref{second_term_expansion:eq}) is also true for $r^{(n)}_{K-k}$. 

We again look at the right hand side of (\ref{rK_k:eq}). Using the assumption of mathematical induction, we have one term expansion for $r^{(n+1)}_{K-k-1}$, $r^{(n+1)}_{K-k}$ and 
$r^{(n)}_{K-k}$ using (\ref{first_term_refine:eq}) and two term expansion for $r^{(n)}_{K-k+1}$ as follows.
\begin{eqnarray}
   \label{rnKkm1:f_termeq}
   r^{(n)}_{K-k-1} & = & \gamma^{(k+1)}_1 \frac{1}{n^{k+2}} + O (\frac{1}{n^{k+3}}), \\
   \label{rnKkm2:f_termeq}
   r^{(n)}_{K-k}   & = & \gamma^{(k)}_1 \frac{1}{n^{k+1}} + O (\frac{1}{n^{k+2}}), \\
   \label{rnKkm3:f_termeq}
   r^{(n)}_{K-k+1}   & = & \gamma^{(k-1)}_1 \frac{1}{n^{k}} - \gamma^{(k-1)}_2 \frac{1}{n^{k+1}} + O (\frac{1}{n^{k+2}}).    
\end{eqnarray}
Furthermore, we have
\[
	\frac{\widetilde{r}^{(n+1)}_{K-k}}{\mu}  =   \bar{r} \gamma^{(k)}_1 \frac{1}{(n+1)^k} + O (\frac{1}{(n+1)^{k+1}}) = \bar{r} \gamma^{(k)}_1 \frac{1}{n^k} + O (\frac{1}{n^{k+1}}),
\]
and 
\[
	\frac{r^{(n)}_K}{n}  =  \gamma^{(0)}_1 \frac{1}{n^2} + O (\frac{1}{n^3}), 
\]
leading to
\begin{equation}
	\label{complex_term:f_termeq}
	\frac{\widetilde{r}^{(n+1)}_{K-k}  r^{(n)}_K}{n \mu} = \bar{r} \gamma^{(0)}_1 \gamma^{(k)}_1 \frac{1}{n^{k+2}} + O (\frac{1}{n^{k+3}}).
\end{equation}
Substituting (\ref{rnKkm1:f_termeq})--(\ref{complex_term:f_termeq}) into the right hand side of (\ref{rK_k:eq}) and arranging the results yields
\begin{eqnarray*}
	r^{(n)}_{K-k} & = & \gamma^{(k)}_1 \frac{1}{n^{k+1}} - \left[ \frac{\nu_{K-k+1}}{\mu} \gamma^{(k-1)}_2  + 
                     \left( \frac{\lambda + \nu_{K-k}}{\mu} - \gamma^{(0)}_1 \bar{r}   \right) \gamma^{(k)}_1   \right] \frac{1}{n^{k+2}} \nonumber \\
                  &   & \mbox{} + O (\frac{1}{n^{k+3}}),
\end{eqnarray*}
which implies (\ref{two_term_expand:initial}).
\end{proof}
\subsubsection{Three term expansion}

It follows from Proposition~\ref{sum:coro} that 
\[
	r^{(n)}_K = \gamma^{(0)}_1 \frac{1}{n} - \frac{1}{\bar{r} + r \bar{q}} \sum_{k=1}^K r^{(n)}_{K-k}. 
\]
We substitute (\ref{second_term_expansion:eq}) into this equation in order to obtain
\begin{equation}
\label{rnK:eq}
	r^{(n)}_K = \gamma^{(0)}_1 \frac{1}{n} - \gamma^{(0)}_2 \frac{1}{n^2} + \gamma^{(0)}_3 \frac{1}{n^3} + O (\frac{1}{n^4}), 
\end{equation}
where 
\begin{eqnarray*}
	\gamma^{(0)}_3 & = & \frac{\gamma^{(1)}_2 - \gamma^{(2)}_1 }{\bar{r} + r \bar{q}}.
\end{eqnarray*}
Equation (\ref{rnK:eq}) suggests that we can find three term expansion for $r^{(n)}_K$. The following lemma shows this property for $r^{(n)}_{K-k}$ ($k=1,2,\dots,K$). 
\begin{lem}\label{third_term_expansion:lem}
We improve the expansion for $r^{(n)}_{K-k}$ ($k=0,1,\dots,K$) as follows.
\begin{equation}
  r^{(n)}_{K-k} = \gamma^{(k)}_1 \frac{1}{n^{k+1}} - \gamma^{(k)}_2 \frac{1}{n^{k+2}} + \gamma^{(k)}_3 \frac{1}{n^{k+3}} + O(\frac{1}{n^{k+4}}), 
\end{equation}
where $\gamma^{(k)}_3$ ($k=1,2,\dots,K$) is calculated using the following recursion. 
\begin{eqnarray*}
    \gamma^{(0)}_3 & = & \frac{\gamma^{(1)}_2 - \gamma^{(2)}_1 }{\bar{r} + r \bar{q}}, \\
	\gamma^{(k)}_3 & = & \frac{\nu_{K-k+1}}{\mu} \gamma^{(k-1)}_3 + \left( \frac{\lambda}{\mu} + \frac{\lambda p r}{\mu(\bar{r} + r \bar{q})} \right) \gamma^{(k+1)}_1 \\  
                   &   & \mbox{} +  \left(  \frac{\lambda + \nu_{K-k}}{\mu} - \frac{\lambda p \bar{r}}{\mu(\bar{r} + r \bar{q})} \right) \gamma^{(k)}_2 - \frac{\lambda p \bar{r} [k\mu (\bar{r} + r \bar{q}) + \nu_K] }{\mu^2 (\bar{r} + r \bar{q})^2} \gamma^{(k)}_1.
\end{eqnarray*}
\end{lem}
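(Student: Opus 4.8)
The plan is to argue by induction on $k$, following the template of Lemmas~\ref{first_term:lem} and~\ref{second_term_expansion:lem}. The base case $k=0$ is essentially already in hand: equation~(\ref{rnK:eq}) was obtained precisely by substituting the two-term expansion~(\ref{second_term_expansion:eq}) into the identity of Proposition~\ref{sum:coro} written as $r^{(n)}_K = \gamma^{(0)}_1/n - (\bar{r} + r\bar{q})^{-1}\sum_{k=1}^K r^{(n)}_{K-k}$, so it already exhibits the coefficient $\gamma^{(0)}_3 = (\gamma^{(1)}_2 - \gamma^{(2)}_1)/(\bar{r} + r\bar{q})$ together with an $O(1/n^4)$ remainder. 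Hence only the induction step needs work, and there the main tool is again the key recursion~(\ref{rK_k:eq}).

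For the induction step I would assume the three-term expansion for $r^{(n)}_{K-i}$, $i=0,1,\dots,k-1$, and expand the four terms on the right-hand side of~(\ref{rK_k:eq}) up to order $1/n^{k+3}$, then read off the coefficient of $1/n^{k+3}$. Three of the four terms are straightforward. The term $\lambda r^{(n)}_{K-k-1}/(n\mu)$ needs only the leading term of $r^{(n)}_{K-k-1}$, supplied by Lemma~\ref{first_term:lem} (the index $K-k-1$ falls outside the induction hypothesis, which is why the one-term result is invoked here, with the boundary convention $\gamma^{(K+1)}_1=0$), and contributes $(\lambda/\mu)\gamma^{(k+1)}_1$. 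The term $\nu_{K-k+1}r^{(n)}_{K-k+1}/(n\mu)$ needs the full three-term expansion of $r^{(n)}_{K-k+1}$, which is the case $i=k-1$ of the induction hypothesis, and contributes $(\nu_{K-k+1}/\mu)\gamma^{(k-1)}_3$. The term $(\lambda+\nu_{K-k})r^{(n)}_{K-k}/(n\mu)$ needs only the two-term expansion of $r^{(n)}_{K-k}$ from Lemma~\ref{second_term_expansion:lem}, contributing $(\lambda+\nu_{K-k})\gamma^{(k)}_2/\mu$ once the overall minus sign in~(\ref{rK_k:eq}) is accounted for.

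The delicate term is $\widetilde{r}^{(n+1)}_{K-k}r^{(n)}_K/(n\mu)$. I would write $\widetilde{r}^{(n+1)}_{K-k}/\mu = (n+1)\bigl(r\, r^{(n+1)}_{K-k-1} + \bar{r}\, r^{(n+1)}_{K-k}\bigr)$, insert the one-term expansion of $r^{(n+1)}_{K-k-1}$ and the two-term expansion of $r^{(n+1)}_{K-k}$, and then convert the resulting powers of $1/(n+1)$ into powers of $1/n$ via $1/(n+1)^k = 1/n^k - k/n^{k+1} + O(1/n^{k+2})$; this conversion is exactly what produces the factor $k\mu(\bar{r}+r\bar{q})$ sitting inside the bracket of the claimed formula for $\gamma^{(k)}_3$. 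Multiplying the resulting expansion $\widetilde{r}^{(n+1)}_{K-k}/\mu = \bar{r}\gamma^{(k)}_1/n^k + \bigl(r\gamma^{(k+1)}_1 - \bar{r}\gamma^{(k)}_2 - k\bar{r}\gamma^{(k)}_1\bigr)/n^{k+1} + O(1/n^{k+2})$ by $r^{(n)}_K/n = \gamma^{(0)}_1/n^2 - \gamma^{(0)}_2/n^3 + O(1/n^4)$ from~(\ref{rnK:eq}) gives the $1/n^{k+3}$ contribution $\bigl(r\gamma^{(k+1)}_1 - \bar{r}\gamma^{(k)}_2 - k\bar{r}\gamma^{(k)}_1\bigr)\gamma^{(0)}_1 - \bar{r}\gamma^{(k)}_1\gamma^{(0)}_2$. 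Summing the four contributions, substituting $\gamma^{(0)}_1 = \lambda p/(\mu(\bar{r}+r\bar{q}))$ and $\gamma^{(0)}_2 = \nu_K\lambda p/(\mu^2(\bar{r}+r\bar{q})^2)$, and regrouping yields the announced recursion for $\gamma^{(k)}_3$; meanwhile the coefficients of $1/n^{k+1}$ and $1/n^{k+2}$ automatically reproduce $\gamma^{(k)}_1$ and $-\gamma^{(k)}_2$ exactly as in Lemmas~\ref{first_term:lem} and~\ref{second_term_expansion:lem}, and the error is $O(1/n^{k+4})$.

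I expect the only real obstacle to be the bookkeeping in this last term: keeping straight which of the one-, two-, or three-term expansions is legitimately available for each index $K-k-1$, $K-k$, $K-k+1$, at argument $n$ versus $n+1$, and correctly carrying the $1/(n+1)\to 1/n$ correction through the product. Once that is organized, the rest is the same routine term-matching already used in the proofs of the two preceding lemmas.
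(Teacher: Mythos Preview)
Your proposal is correct and follows essentially the same route as the paper's proof: induction on $k$ with base case supplied by~(\ref{rnK:eq}), and the induction step carried out by expanding each of the four terms on the right of~(\ref{rK_k:eq}) to the required order, including the $1/(n+1)\to 1/n$ conversion in the $\widetilde{r}^{(n+1)}_{K-k}$ term that produces the $k\bar{r}\gamma^{(k)}_1$ correction. The only cosmetic difference is that the paper records a two-term expansion for $r^{(n)}_{K-k-1}$ (its equation~(\ref{rnKkm1:eq})) even though, as you correctly note, only the leading term from Lemma~\ref{first_term:lem} actually contributes at order $1/n^{k+3}$.
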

\begin{proof}
We again give a proof for Lemma~\ref{third_term_expansion:lem} using mathematical induction.
Equation (\ref{rnK:eq}) implies Lemma~\ref{third_term_expansion:lem} for $k=0$. 
We assume that $r^{(n)}_{K-i}$ ($i = 0,1,\dots,k-1$) has three term expansion for some $k \geq 1$. We will prove that $r^{(n)}_{K-k}$ also has three term expansion and 
we find the coefficient of $1/n^{k+3}$ in the expansion. We use (\ref{rK_k:eq}) again keeping in mind that $r^{(n)}_{K-i}$ ($i=0,1,\dots,K$) has two term expansion by 
Lemma~\ref{second_term_expansion:lem} and that $r^{(n)}_{K-k+1}$ has three term expansion according to the assumption of mathematical induction.
We have
\begin{eqnarray}
    \label{rnKkm1:eq}
	r^{(n)}_{K-k-1}   & = & \gamma^{(k+1)}_1 \frac{1}{n^{k+2}} - \gamma^{(k+1)}_2 \frac{1}{n^{k+3}} + O (\frac{1}{n^{k+4}}), \\
	\label{rnKk:eq}
	r^{(n)}_{K-k}     & = & \gamma^{(k)}_1 \frac{1}{n^{k+1}} - \gamma^{(k)}_2 \frac{1}{n^{k+2}} + O (\frac{1}{n^{k+3}}), \\
	\label{rnKkp1:eq}
	r^{(n)}_{K-k+1}   & = & \gamma^{(k-1)}_1 \frac{1}{n^{k}} - \gamma^{(k-1)}_2 \frac{1}{n^{k+1}} + \gamma^{(k-1)}_3 \frac{1}{n^{k+2}} + O (\frac{1}{n^{k+3}}).
\end{eqnarray}
The last term in (\ref{rK_k:eq}) is expanded in terms of $1/n$ as follows.
\begin{eqnarray*}
	\frac{\widetilde{r}^{(n+1)}_{K-k}}{\mu} & = & (n+1)r r^{(n+1)}_{K-k-1} + (n+1) \bar{r} r^{(n+1)}_{K-k}\\ 
                                            & = &  \bar{r} \gamma^{(k)}_1 \frac{1}{(n+1)^k} + \left( r \gamma^{(k+1)}_1 - \bar{r} \gamma^{(k)}_2 \right) \frac{1}{(n+1)^{k+1}} 
	                                           + O(\frac{1}{(n+1)^{k+2}}) \\
	                                        & = &  \bar{r} \gamma^{(k)}_1 \frac{1}{n^k}  (1 + \frac{1}{n})^{-k} + \left( r \gamma^{(k+1)}_1 - \bar{r} \gamma^{(k)}_2 \right) \frac{1}{n^{k+1}}  (1+ \frac{1}{n})^{-(k+1)}  + O(\frac{1}{(n+1)^{k+2}}) \\		
											& = &  \bar{r} \gamma^{(k)}_1 \frac{1}{n^k} + \left( r \gamma^{(k+1)}_1 - \bar{r} \gamma^{(k)}_2 - \bar{r} \gamma^{(k)}_1 k \right) \frac{1}{n^{k+1}} + O(\frac{1}{n^{k+2}}).	                                 
\end{eqnarray*}
On the other hand, we also have 
\[
	\frac{r^{(n)}_K}{n}  =  \gamma^{(0)}_1 \frac{1}{n^2} - \gamma^{(0)}_2 \frac{1}{n^3} + O (\frac{1}{n^4}).
\]
Therefore, we have 
\begin{eqnarray}
	\frac{\widetilde{r}^{(n+1)}_{K-k} r^{(n)}_K}{n \mu} & = & \gamma^{(0)}_1 \bar{r} \gamma^{(k)}_1 \frac{1}{n^{k+2}}  \nonumber \\ 
                                                        \label{complex:term}
														&  & \mbox{} +  \left[ \gamma^{(0)}_1 (r \gamma^{(k+1)}_1 - \bar{r} \gamma^{(k)}_2 - \bar{r} \gamma^{(k)}_1 k) - \gamma^{(0)}_2 \bar{r} \gamma^{(k)}_1 \right] 
															 \frac{1}{n^{k+3}}  + O (\frac{1}{n^{k+4}}). \qquad
\end{eqnarray}
From (\ref{rnK:eq}), (\ref{rnKkm1:eq})--(\ref{complex:term}), we obtain the follow expression.
\[
	r^{(n)}_{K-k} = \gamma^{(k)}_1 \frac{1}{n^{k+1}} - \gamma^{(k)}_2 \frac{1}{n^{k+2}} + \gamma^{(k)}_3 \frac{1}{n^{k+3}} + O (\frac{1}{n^{k+4}}),
\]
where
\begin{eqnarray*}
	\gamma^{(k)}_3 & = & \frac{\nu_{K-k+1}}{\mu} \gamma^{(k-1)}_3 + \left( \frac{\lambda}{\mu} + \gamma^{(0)}_1 r \right) \gamma^{(k+1)}_1 + \left( \frac{\lambda + \nu_{K-k}}{\mu} - \gamma^{(0)}_1 \bar{r} \right) \gamma^{(k)}_2  - (\gamma^{(0)}_1 k  + \gamma^{(0)}_2) \bar{r} \gamma^{(k)}_1,
\end{eqnarray*}
which is consistent with the announced result.
\end{proof}

Repeating these processes, we are also able to derive the following generalized results for $m$-term expansion ($m \geq 4$).
\subsubsection{General expansion}

Let ${(\phi)}_n$ $(-\infty < \phi < \infty$, $n \in \bbZ_+)$ denotes the Pochhammer symbol defined by
\[
{(\phi)}_n = 
	\left \{
		\begin{array}{ll}
		1, & \quad n=0, \\
		\phi(\phi+1)\cdots(\phi+n-1), & \quad n \in \bbN. \nonumber
		\end{array}
		\right.
\]
%

\begin{thm}\label{main_nonpersistent:theo}
For $m \geq 3$, we have 
\begin{equation}\label{m_terms:expansion}
r^{(n)}_{K-k} = \sum_{i=1}^m \gamma^{(k)}_i (-1)^{i+1} \frac{1}{n^{k+i}} + O (\frac{1}{n^{k+m+1}}), 
\end{equation}
where $\gamma^{(k)}_m$ is recursively defined as follows
\begin{eqnarray*}
\gamma^{(0)}_m & = & \frac{1}{\bar{r} + r \bar{q}} \sum_{k=1}^K \gamma^{(k)}_{m-k} (-1)^{k+1}, \\
\gamma^{(k)}_m & = & \frac{\nu_{K-k+1}}{\mu} \gamma^{(k-1)}_m + \frac{\lambda}{\mu} \gamma^{(k+1)}_{m-2} + \frac{\lambda + \nu_{K-k}}{\mu} \gamma^{(k)}_{m-1} \\ 						
			   &   & \mbox{} + \sum_{j=0}^{m-2} \varphi^{(k)}_j \gamma^{(0)}_{m-j-1} (-1)^{m-j}, \qquad k = 1,2,\dots,K.
\end{eqnarray*}
Furthermore, $\varphi^{(k)}_j$ is defined by
\[
	\varphi^{(k)}_j = \left \{ 
	\begin{array}{ll}
	\bar{r} \beta^{(k)}_0, & \quad j=0 \\
	r \alpha^{(k)}_j + \bar{r} \beta^{(k)}_j, & \quad j \geq 1,
	\end{array}	
	\right.
\]
where 
\begin{eqnarray*}
   \alpha^{(k)}_j & = & \sum_{i=1}^{j} \gamma^{(k+1)}_i (-1)^{j+1} \frac{(k+j)_{j-i}}{(j-i)!}, \qquad j \in \bbN, \\
   \beta^{(k)}_j  & = & \sum_{i=1}^{j+1}  \gamma^{(k)}_i (-1)^j \frac{(k+i-1)_{j+1-i}}{(j+1-i)!}, \qquad j \in \bbZ_+.
\end{eqnarray*}
\end{thm}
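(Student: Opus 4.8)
The plan is to prove Theorem~\ref{main_nonpersistent:theo} by a double induction: an \emph{outer} induction on the number of terms $m$, and, for each fixed $m$, an \emph{inner} induction on the number of idle servers $k$, running from $k=0$ up to $k=K$. The outer base case $m=3$ is exactly Lemma~\ref{third_term_expansion:lem}: one checks that, with $\beta^{(k)}_0=\gamma^{(k)}_1$, $\alpha^{(k)}_1=\gamma^{(k+1)}_1$ and $\beta^{(k)}_1=-(k\gamma^{(k)}_1+\gamma^{(k)}_2)$, the recursion for $\gamma^{(k)}_m$ with $m=3$ collapses to the formula stated there. So fix $m\ge 4$ and assume that the $(m-1)$-term expansion (\ref{m_terms:expansion}) --- hence every shorter one --- holds for all $k$.

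For the inner base case $k=0$, use Proposition~\ref{sum:coro} in the form already exploited in (\ref{rnK:eq}), namely $r^{(n)}_K=\frac{\lambda p}{\mu(\bar r+r\bar q)}\frac1n-\frac{1}{\bar r+r\bar q}\sum_{k=1}^{K}r^{(n)}_{K-k}$. Since $r^{(n)}_{K-k}$ has leading order $1/n^{k+1}$, the coefficient of $1/n^{m}$ on the right-hand side is produced by the coefficient $\gamma^{(k)}_{m-k}$ of $r^{(n)}_{K-k}$, and $m-k\le m-1$ whenever $k\ge 1$, so the available $(m-1)$-term expansions suffice; reading off that coefficient yields the $m$-term expansion of $r^{(n)}_{K}$ together with the stated formula for $\gamma^{(0)}_m$. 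For the inner step, assume the full $m$-term expansion for $r^{(n)}_{K-i}$, $i=0,\dots,k-1$, and substitute the relevant expansions into the key identity (\ref{rK_k:eq}). Three of the four terms on the right are routine, as in the proofs of Lemmas~\ref{second_term_expansion:lem} and~\ref{third_term_expansion:lem}: $\lambda r^{(n)}_{K-k-1}/(n\mu)$ contributes $\frac{\lambda}{\mu}\gamma^{(k+1)}_{m-2}$ to the coefficient of $1/n^{k+m}$ (using the $(m-1)$-term expansion of $r^{(n)}_{K-k-1}$), $\nu_{K-k+1}r^{(n)}_{K-k+1}/(n\mu)$ contributes $\frac{\nu_{K-k+1}}{\mu}\gamma^{(k-1)}_m$ (using the inner hypothesis), and $(\lambda+\nu_{K-k})r^{(n)}_{K-k}/(n\mu)$ contributes $\frac{\lambda+\nu_{K-k}}{\mu}\gamma^{(k)}_{m-1}$; none of these changes the coefficients of order below $1/n^{k+m}$, and the remainders combine into $O(1/n^{k+m+1})$.

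The fourth term $\widetilde r^{(n+1)}_{K-k}r^{(n)}_K/(n\mu)$ is the heart of the argument. Writing $\widetilde r^{(n+1)}_{K-k}/\mu=(n+1)\bigl(r\,r^{(n+1)}_{K-k-1}+\bar r\,r^{(n+1)}_{K-k}\bigr)$ --- the $r$-summand being absent and $r^{(n)}_{-1}=0$ when $k=K$, and $r^{(n)}_{K-k-1}=r^{(n)}_0$ with $\widetilde r^{(n+1)}_0=(n+1)\mu\bar r\,r^{(n+1)}_0$ when $k=K-1$ --- one first re-expands each $1/(n+1)^{a}$ in powers of $1/n$ through $1/(n+1)^{a}=\sum_{\ell\ge 0}(-1)^{\ell}\frac{(a)_{\ell}}{\ell!}\,n^{-a-\ell}$, the Pochhammer symbol $(a)_\ell$ arising here; collecting the coefficient of $1/n^{k+j}$ in $(n+1)r\,r^{(n+1)}_{K-k-1}/\mu$ and in $(n+1)\bar r\,r^{(n+1)}_{K-k}/\mu$ gives precisely $\alpha^{(k)}_j$ and $\beta^{(k)}_j$, hence $\varphi^{(k)}_j$ after adding the two pieces. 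Forming the Cauchy product with $r^{(n)}_K/n=\sum_{\ell\ge1}\gamma^{(0)}_\ell(-1)^{\ell+1}n^{-\ell-1}$, the coefficient of $1/n^{k+m}$ turns out to be $\sum_{j=0}^{m-2}\varphi^{(k)}_j\gamma^{(0)}_{m-j-1}(-1)^{m-j}$. Adding the four contributions and equating the total with $\gamma^{(k)}_m(-1)^{m+1}$ (the coefficient of $1/n^{k+m}$ in $r^{(n)}_{K-k}$) gives the asserted recursion for $\gamma^{(k)}_m$. The induction is well founded: $\gamma^{(0)}_m$ involves only $\gamma^{(k)}_{m-k}$ with $k\ge1$, and $\gamma^{(k)}_m$ ($k\ge1$) involves $\gamma^{(k-1)}_m$, $\gamma^{(k+1)}_{m-2}$, $\gamma^{(k)}_{m-1}$, and, via $\varphi^{(k)}_j$, the quantities $\gamma^{(k+1)}_i,\gamma^{(k)}_i$ ($i\le m-1$) and $\gamma^{(0)}_{m-j-1}$ --- each either of strictly smaller order or at the same order $m$ but with strictly smaller $k$; the convention $\gamma^{(k)}_\cdot=0$ for $k>K$ makes the boundary cases collapse correctly.

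The step I expect to be the main obstacle is the bookkeeping inside this fourth term: carrying out the Cauchy product of the two series and the re-expansion of $1/(n+1)^a$ into powers of $1/n$ at the same time, while keeping the signs $(-1)^{i+1}$ and $(-1)^\ell$ and the several index shifts consistent, so that the coefficients assemble into exactly $\alpha^{(k)}_j$, $\beta^{(k)}_j$, $\varphi^{(k)}_j$. Once those auxiliary quantities have been reverse-engineered from this computation, the remainder of the proof is the same ``substitute the shorter expansions into (\ref{rK_k:eq}) and read off the next coefficient'' mechanism already used at low orders, now executed at a general order $m$.
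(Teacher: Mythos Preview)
Your proposal is correct and follows essentially the same route as the paper's proof: a double induction (outer on $m$, inner on $k$), with Proposition~\ref{sum:coro} handling the inner base $k=0$ and the key identity (\ref{rK_k:eq}) handling the inner step, the Taylor expansion (\ref{taylor:expand}) of $(1+1/n)^{-a}$ being used to rewrite the $(n+1)$-shifted terms in powers of $1/n$. Your treatment is in fact more explicit than the paper's, which only writes out the expansions (\ref{rnKkm1:eqd})--(\ref{rnK:eqd}) and then states that substitution into (\ref{rK_k:eq}) and extraction of the coefficient of $1/n^{k+m}$ yields the formula, without spelling out the Cauchy product or the well-foundedness of the recursion as you do.
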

\begin{proof}
Theorem~\ref{main_nonpersistent:theo} is also proved using the same mathematical induction methodology as used for the cases $m=1,2$ and 3 in 
Lemmas~\ref{first_term:lem}, \ref{second_term_expansion:lem} and \ref{third_term_expansion:lem}. Another key for the proof is the following 
expansion:
\begin{equation}\label{taylor:expand}
	\left( \frac{n}{n+1} \right)^a = \left( 1 + \frac{1}{n} \right)^{-a} = \sum_{j=0}^\infty \frac{(a)_j}{j!} (-1)^j \frac{1}{n^j}, \qquad a >0.
\end{equation}

Indeed, assuming that we have $m-1$ term expansion for $r^{(n)}_{K-k}$ ($k=0,1,\dots,K$). This assumption and Proposition~\ref{sum:coro} implies announced result 
for $r^{(n)}_K$. Assuming that $r^{(n)}_{K-i}$ ($i=0,1,\dots,k-1$) already has $m$ term expansion, we prove that $r^{(n)}_{K-k}$ does too. Using the assumption 
of mathematical induction, we have
\begin{eqnarray}
    \label{rnKkm1:eqd}
	r^{(n)}_{K-k-1}   & = & \sum_{i=1}^{m-1} \gamma^{(k+1)}_i (-1)^{i+1} \frac{1}{n^{k+1+i}} + O(\frac{1}{n^{k+m+1}}) , \\
	\label{rnKk:eqd}
	r^{(n)}_{K-k}     & = & \sum_{i=1}^{m-1} \gamma^{(k)}_i (-1)^{(i+1)} \frac{1}{n^{k+i}} + O (\frac{1}{n^{k+m}} ), \\
	\label{rnKkp1:eqd}
	r^{(n)}_{K-k+1}   & = & \sum_{i=1}^m \gamma^{(k-1)}_i (-1)^{(i+1)} \frac{1}{n^{k-1+i}} + O (\frac{1}{n^{k+m}} ), \\
	\label{rn1Kkm1:eqd}
	(n+1) r^{(n+1)}_{K-k-1} & = & \sum_{i=1}^{m-1} \gamma^{(k)}_i (-1)^{i+1} \frac{1}{n^{k+i}} \left( 1 + \frac{1}{n} \right)^{-(k+i)} + O (\frac{1}{n^{k+m}} ), \\
	\label{rn1Kkm:eqd}
	(n+1) r^{(n+1)}_{K-k} & = & \sum_{i=1}^{m-1} \gamma^{(k)}_i (-1)^{i+1} \frac{1}{n^{k+i-1}} \left( 1 + \frac{1}{n} \right)^{-(k+i-1)} + O (\frac{1}{n^{k+m-1}} ), \\
	\label{rnK:eqd}
	r^{(n)}_K & = & \sum_{i=1}^{m-1} \gamma^{(0)}_i \frac{1}{n^i} + O(\frac{1}{n^m}).
\end{eqnarray}
It should be noted that $r^{(n)}_{K-k+1}$ has $m$ term expansion (\ref{rnKkp1:eqd}) due to mathematical induction. 
We further expand (\ref{rn1Kkm1:eqd}) and (\ref{rn1Kkm:eqd}) in terms of $1/n$ using (\ref{taylor:expand}). Finally, we substitute these expansions, 
(\ref{rnKkm1:eqd}), (\ref{rnKk:eqd}), (\ref{rnKkp1:eqd}) and (\ref{rnK:eqd}) into (\ref{rK_k:eq}) and subtract the coefficient of $1/n^{k+m}$ in order to obtain $\gamma^{(k)}_m$ 
as announced in (\ref{m_terms:expansion}).
\end{proof}

\begin{prop}\label{asymptotic:theo}
We have 
\[
	C^{(0)}_1  \frac{1}{n!} (\gamma^{(0)}_1)^n  n^{-\frac{\nu_K}{\mu(\bar{r} + r\bar{q})}} \leq \pi_{n,K} \leq C^{(0)}_2  \frac{1}{n!} (\gamma^{(0)}_1)^n  n^{-\frac{\nu_K}{\mu(\bar{r} + r\bar{q})}},
\]
where $C^{(0)}_1$ and $C^{(0)}_2$ are positive numbers independent of $n$.
\end{prop}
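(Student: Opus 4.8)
The plan is to reduce the tail of $\pi_{K,n}$ (the quantity denoted $\pi_{n,K}$ in the statement) to an infinite product of the scalars $r^{(j)}_K$ and then to control that product via the two-term expansion of $r^{(n)}_K$ derived above. First recall that $\vc{\pi}_n = \vc{\pi}_{n-1}\vc{R}^{(n)}$ for $n\in\bbN$ and that, by Proposition~\ref{prop:recursive}, the only nonzero row of $\vc{R}^{(n)}$ is its last one, $\vc{r}^{(n)}$. Reading off the last coordinate gives the scalar recursion $\pi_{K,n}=\pi_{K,n-1}\,r^{(n)}_K$, hence
\begin{equation*}
\pi_{K,n}=\pi_{K,0}\prod_{j=1}^{n} r^{(j)}_K,\qquad n\in\bbN.
\end{equation*}
Since $\{X(t)\}$ is ergodic and irreducible, $\pi_{K,0}>0$; and (\ref{ic:eq}) gives $r^{(n)}_K>0$ for every $n$ (were $r^{(n)}_K=0$, its left-hand side would be nonnegative, contradicting $-p\lambda<0$), so this is a product of strictly positive terms.

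Next I would invoke Lemma~\ref{second_term_expansion:lem} with $k=0$, together with the explicit values $\gamma^{(0)}_1=\lambda p/(\mu(\bar{r}+r\bar{q}))$ and $\gamma^{(0)}_2=\nu_K\lambda p/(\mu^2(\bar{r}+r\bar{q})^2)$, to write
\begin{equation*}
r^{(n)}_K=\frac{\gamma^{(0)}_1}{n}\left(1-\frac{\theta}{n}+O\!\left(\frac{1}{n^2}\right)\right),\qquad \theta:=\frac{\gamma^{(0)}_2}{\gamma^{(0)}_1}=\frac{\nu_K}{\mu(\bar{r}+r\bar{q})}\ge 0.
\end{equation*}
Pulling the factor $\gamma^{(0)}_1/n$ out of each term of the product then yields
\begin{equation*}
\prod_{j=1}^{n} r^{(j)}_K=\frac{(\gamma^{(0)}_1)^{n}}{n!}\,P_n,\qquad P_n:=\prod_{j=1}^{n}\left(1-\frac{\theta}{j}+O\!\left(\frac{1}{j^2}\right)\right),
\end{equation*}
so it remains to show $c_1\,n^{-\theta}\le P_n\le c_2\,n^{-\theta}$ for positive constants $c_1,c_2$ and all $n$.

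To do this I would fix $j_0$ large enough that for $j\ge j_0$ the bracketed factor lies in $(\tfrac12,\tfrac32)$, which is legitimate since $\theta/j+O(1/j^2)\to 0$; the finitely many factors with $j<j_0$ contribute only a fixed positive constant. For $j\ge j_0$ the factor is positive and $\log\bigl(1-\theta/j+O(1/j^2)\bigr)=-\theta/j+O(1/j^2)$, so
\begin{equation*}
\sum_{j=j_0}^{n}\log\left(1-\frac{\theta}{j}+O\!\left(\frac{1}{j^2}\right)\right)=-\theta\sum_{j=j_0}^{n}\frac1j+O(1)=-\theta\log n+O(1),
\end{equation*}
using $\sum_{j\le n}1/j=\log n+O(1)$ and the convergence of $\sum_j O(1/j^2)$. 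Exponentiating gives the two-sided bound on $P_n$, hence on $\prod_{j=1}^n r^{(j)}_K$; multiplying by $\pi_{K,0}$ delivers the claim with $C^{(0)}_1=c_1\pi_{K,0}$ and $C^{(0)}_2=c_2\pi_{K,0}$.

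The main obstacle is bookkeeping rather than anything conceptual: one has to make sure the $O(1/j^2)$ remainder in $r^{(j)}_K$ is uniform in $j$ (it is, being the tail of the convergent expansion in Lemma~\ref{second_term_expansion:lem}), that the product is split at a finite index beyond which the logarithm is well defined, and that $\gamma^{(0)}_1>0$ in the regime considered (true as soon as $\lambda p>0$). Everything else reduces to $\prod_j(1+a_j)=\exp\sum_j\log(1+a_j)$ together with $\sum_{j\le n}1/j=\log n+O(1)$. I note in passing that the same computation in fact yields the sharper statement $\pi_{K,n}\sim C^{(0)}\frac{1}{n!}(\gamma^{(0)}_1)^{n}n^{-\theta}$ for a single constant $C^{(0)}>0$; only the two-sided form is needed in the sequel.
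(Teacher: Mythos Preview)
Your argument is correct and in fact more self-contained than the paper's. The paper proceeds by pushing the expansion of $r^{(n)}_K$ to three terms, writing it as $\frac{\gamma^{(0)}_1}{n}\bigl(1-\frac{a}{n}+\frac{b}{n^2}+O(1/n^3)\bigr)$ with $a=\nu_K/(\mu(\bar r+r\bar q))$, verifying the side condition $a^2-4b<0$, and then invoking Theorem~3.2 of Liu et al.\ as a black box to conclude. You instead unwind the product $\pi_{K,n}=\pi_{K,0}\prod_{j\le n} r^{(j)}_K$ directly, use only the two-term expansion from Lemma~\ref{second_term_expansion:lem}, and control $\prod_{j}(1-\theta/j+O(1/j^2))$ by the standard $\log$--harmonic-sum estimate. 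This is a genuinely more elementary route: it avoids the external citation, needs one fewer term of the expansion, and, as you note, even yields the sharper single-constant asymptotic $\pi_{K,n}\sim C^{(0)}(\gamma^{(0)}_1)^n n^{-\theta}/n!$. The trade-off is that the paper's approach plugs into a ready-made lemma that will also cover the $q=r=1$ case with no extra work, whereas your computation has to be redone (with a different leading term) in Section~\ref{second_case:sec}.
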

\begin{proof}
We rewrite (\ref{rnK:eq}) by expressing $\gamma^{(0)}_1, \gamma^{(0)}_2$ and $\gamma^{(0)}_3$ in terms of given parameters as 
\[
	r^{(n)}_K = \frac{\lambda p }{\mu(\bar{r} + r \bar{q})n} \left[ 1 - \frac{\nu_K}{\mu(\bar{r}+r\bar{q})} \frac{1}{n} 
                + \frac{\nu_K^2}{\mu^2(\bar{r}+r\bar{q})^2}  \left( 1  + \frac{\lambda (\bar{p} \bar{r} + r \bar{q}) }{\nu_K}    \right)  \frac{1}{n^2 }    + O (\frac{1}{n^3}) \right],
\]
in order to obtain the form 
\[
	r^{(n)}_K = \frac{\gamma^{(0)}_1}{n} \left[ 1 - \frac{a}{n} + \frac{b}{n^2} + O(\frac{1}{n^3}) \right],
\]
where 
\[
	a = \frac{\nu_k}{\mu (\bar{r} + r \bar{q})}, \qquad b = a^2 \left( 1 + \frac{\lambda(\bar{p} \bar{r} + r \bar{q})}{\nu_K} \right),
\]
satisfying $a^2 - 4b < 0$.

This expression is enough to guarantee the announced result according to Theorem 3.2 in Liu et al.~\cite{Binliu11}.
It should be remarked here that Liu et al.~\cite{Binliu11} derive asymptotic results for a special model where $c = K$ and $r=1$ and $\nu_i = i \nu$ ($i=0,1,\dots,c$). 
\end{proof}

\begin{coro}\label{asymptotic:coro_qr_l_1} There exist $C^{(k)}_1 > 0$ and $C^{(k)}_2 > 0$ independent of $n$ such that
\[
	C^{(k)}_1 \frac{1}{n!} (\gamma^{(0)}_1)^n  n^{-\frac{\nu_K}{\mu(\bar{r} + r\bar{q})} -k}  \leq 
	\pi_{n,K-k}   \leq 
	C^{(k)}_2 \frac{1}{n!} (\gamma^{(0)}_1)^n  n^{-\frac{\nu_K}{\mu(\bar{r} + r\bar{q})} -k}, \qquad n \to \infty,
\]
for $k=1,2,\dots,K$.
\end{coro}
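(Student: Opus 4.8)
The plan is to reduce the statement to the already-proved Proposition~\ref{asymptotic:theo} via the matrix-geometric relation $\vc{\pi}_n = \vc{\pi}_{n-1}\vc{R}^{(n)}$. First I would note that, since the first $K$ rows of $\vc{R}^{(n)}$ vanish and its last row is $\vc{r}^{(n)}=(r^{(n)}_0,\dots,r^{(n)}_K)$, this relation collapses to the scalar identities
\[
	\pi_{K-k,n} = \pi_{K,n-1}\, r^{(n)}_{K-k}, \qquad k=0,1,\dots,K,\ n\in\bbN .
\]
Hence it suffices to combine a two-sided bound on $\pi_{K,n-1}$ with one on $r^{(n)}_{K-k}$ (the case $k=0$ being Proposition~\ref{asymptotic:theo} itself).

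For the first factor I would invoke Proposition~\ref{asymptotic:theo} applied at orbit level $n-1$; for the second, Lemma~\ref{first_term:lem} gives $r^{(n)}_{K-k} = \gamma^{(k)}_1\, n^{-(k+1)} + O(n^{-(k+2)})$ with $\gamma^{(k)}_1>0$, so there is an $n_0$ and constants $0<d^{(k)}_1\le d^{(k)}_2$ (both arbitrarily close to $\gamma^{(k)}_1$) with $d^{(k)}_1 n^{-(k+1)} \le r^{(n)}_{K-k} \le d^{(k)}_2 n^{-(k+1)}$ for $n\ge n_0$. Multiplying the two bounds, $\pi_{K-k,n}$ is squeezed, for $n\ge n_0+1$, between positive-constant multiples of $\frac{1}{(n-1)!}(\gamma^{(0)}_1)^{n-1}(n-1)^{-\nu_K/(\mu(\bar r+r\bar q))}\, n^{-(k+1)}$.

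It then remains to rewrite this quantity in the target form. Writing $a=\nu_K/(\mu(\bar r+r\bar q))$, I would use $1/(n-1)! = n/n!$, $(\gamma^{(0)}_1)^{n-1} = (\gamma^{(0)}_1)^{n}/\gamma^{(0)}_1$, and $(n-1)^{-a} = n^{-a}(1-1/n)^{-a}$, the last factor lying between two positive constants for $n$ large; the powers of $n$ then combine as $n\cdot n^{-a}\cdot n^{-(k+1)} = n^{-a-k}$. This produces exactly $\frac{1}{n!}(\gamma^{(0)}_1)^n n^{-a-k}$ up to positive multiplicative constants, i.e. the claimed inequalities with $C^{(k)}_1,C^{(k)}_2$ proportional to $\gamma^{(k)}_1/\gamma^{(0)}_1$ times the extremal values of the bounded factors. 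Since the chain is ergodic, each of the finitely many $\pi_{K-k,n}$ with $n<n_0+1$ is a strictly positive real number, so these remaining indices are absorbed by shrinking $C^{(k)}_1$ and enlarging $C^{(k)}_2$. The whole argument is bookkeeping once the scalar identity is in place; the only delicate point --- the main obstacle --- is the index shift: one must check that replacing $n-1$ by $n$ in the factorial, geometric, and polynomial prefactors contributes exactly one extra power of $n$, so that the final exponent is $-a-k$ rather than $-a-k-1$, and that $(\gamma^{(0)}_1)^{n-1}/(n-1)!$ turns cleanly into a constant multiple of $(\gamma^{(0)}_1)^{n}/n!$.
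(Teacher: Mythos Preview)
Your argument is correct and follows essentially the same route as the paper: both use the scalar identity $\pi_{K-k,n}=\pi_{K,n-1}\,r^{(n)}_{K-k}$ together with Proposition~\ref{asymptotic:theo} and Lemma~\ref{first_term:lem}. The only difference is cosmetic: the paper divides the two instances of that identity (for $K-k$ and for $K$) to obtain $\pi_{K-k,n}/\pi_{K,n}=r^{(n)}_{K-k}/r^{(n)}_{K}\sim(\gamma^{(k)}_1/\gamma^{(0)}_1)n^{-k}$, which lets it invoke Proposition~\ref{asymptotic:theo} at index $n$ directly and thereby sidestep the $(n-1)\to n$ bookkeeping you carry out.
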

\begin{proof}
We have
\[
	\vc{\pi}_n = \pi_{K,n-1} \vc{r}^{(n)}, 
\]
leading to 
\[
	\frac{\pi_{K-k,n}}{\pi_{K,n}} = \frac{r^{(n)}_{K-k}}{r^{(n)}_{K}}.
\]
Thus, Proposition~\ref{asymptotic:theo} and Lemma~\ref{first_term:lem} imply the announced result.
\end{proof}

\begin{remark}
In our model with two type of nonpersistent customers, a customer in the orbit leaves the system with probability  
$\bar{r} + r \bar{q}$. Thus, we may think that this model is equivalent to the corresponding one with 
one type of nonpersistent customer~\cite{Binliu11}, where a blocked retrial customer abandons with probability $\bar{r} + r \bar{q}$. 
This observation is confirmed in the asymptotic results presented in Proposition~\ref{asymptotic:theo} and Corollary~\ref{asymptotic:coro_qr_l_1} 
because the formulae here involve only $\bar{r} + r \bar{q}$. 
However, we see that the third term in the expansion of $r^{(n)}_K$ involves not only $\bar{r} + r \bar{q}$ but also 
$p$, $q$ and $r$, individually.  We further numerically investigate this matter again in Section~\ref{numerical:exam}. 
\end{remark}

\subsection{The case $q=r=1$}\label{second_case:sec}

\subsubsection{Explicit results for the cases $K=1$ and $K = 2$}
In this section, we present explicit expressions for $r^{(n)}_k$ ($k=0,1,\dots,K$) for the cases $K=1$ and $K=2$. 
\begin{thm}
For the cases $K=1$ and $K=2$, $r^{(n)}_k$ ($k=0,1,\dots,K$) is given as follows.
\begin{itemize}
\item{$K=1$,}
\begin{align*}
r^{(n)}_0 & =  \frac{\lambda p } {n \mu}, \\
r^{(n)}_1 & =  \frac{\lambda p } {n \mu} \frac{\lambda + n \mu}{\nu_1}.
\end{align*}
\item{$K=2$,}
\begin{align*}
r^{(n)}_0 & = \frac{\lambda p \nu_1}{n \mu (\lambda + \nu_1 + n \mu)}, \\
r^{(n)}_1 & = \frac{\lambda p (\lambda + n \mu)}{n \mu (\lambda + \nu_1 + n\mu)}, \\
r^{(n)}_2 & = \frac{\lambda p \left[ (\lambda + n\mu)^2 + n \mu \nu_1  \right]   }{n \mu \left[ \nu_2 ( \lambda + \nu_1 + (n+1) \mu ) + \lambda p \nu_1  \right]} \frac{\lambda + \nu_1 + (n+1) \mu}{\lambda + \nu_1 + n \mu}.
\end{align*}
\end{itemize}
\end{thm}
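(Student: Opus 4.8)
The plan is to exploit the degeneracy that occurs when $q=r=1$, namely $\bar r=\bar q=0$, which decouples the level-$n$ equations~(\ref{i0:eq})--(\ref{ic:eq}) from level $n+1$ in the rows we need, and makes the last row $\vc{r}^{(n)}$ computable in closed form. First I would record the simplifications induced by $\bar r=\bar q=0$: $\widetilde r^{(n)}_0=0$, $\widetilde r^{(n)}_i=n\mu\,r^{(n)}_{i-1}$ for $i=1,\dots,K$, and $b^{(n)}_K=-(\lambda p+\nu_K)$; moreover Proposition~\ref{sum:coro} reduces to $\sum_{i=0}^{K-1}r^{(n)}_i=\lambda p/(n\mu)$. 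The structural point is that, since $\widetilde r^{(n+1)}_0=0$, equation~(\ref{i0:eq}) (the $i=0$ balance) no longer contains $r^{(n)}_K$, so together with the reduced Proposition~\ref{sum:coro} it becomes a self-contained linear system inside level $n$ for $r^{(n)}_0,\dots,r^{(n)}_{K-1}$.

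For $K=1$, Proposition~\ref{sum:coro} gives $r^{(n)}_0=\lambda p/(n\mu)$ directly, and~(\ref{i0:eq}), which now reads $-(\lambda+n\mu)r^{(n)}_0+\nu_1 r^{(n)}_1=0$, gives $r^{(n)}_1=(\lambda+n\mu)r^{(n)}_0/\nu_1$; substituting yields the claimed formulae. For $K=2$, Proposition~\ref{sum:coro} gives $r^{(n)}_0+r^{(n)}_1=\lambda p/(n\mu)$ and~(\ref{i0:eq}) gives $\nu_1 r^{(n)}_1=(\lambda+n\mu)r^{(n)}_0$; solving this $2\times 2$ system produces the stated $r^{(n)}_0$ and $r^{(n)}_1$, valid for every $n$. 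Then I would take $i=1$ in~(\ref{i1c_m_1:eq}), which after the simplifications becomes
\begin{equation*}
\lambda r^{(n)}_0-(\lambda+n\mu+\nu_1)r^{(n)}_1+\bigl(\nu_2+(n+1)\mu\,r^{(n+1)}_0\bigr)r^{(n)}_2=0,
\end{equation*}
and solve for $r^{(n)}_2$, using the already-obtained formula for $r^{(n)}_0$ evaluated at $n+1$, so that $(n+1)\mu\,r^{(n+1)}_0=\lambda p\nu_1/(\lambda+\nu_1+(n+1)\mu)$. Putting everything over a common denominator gives the announced expression for $r^{(n)}_2$.

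No separate uniqueness argument is needed. For $K=1$, Proposition~\ref{sum:coro} and~(\ref{i0:eq}) already determine both $r^{(n)}_0$ and $r^{(n)}_1$; for $K=2$, Proposition~\ref{sum:coro} and~(\ref{i0:eq}) determine $r^{(n)}_0$ and $r^{(n)}_1$, after which~(\ref{i1c_m_1:eq}) with $i=1$ determines $r^{(n)}_2$ (its coefficient $\nu_2+(n+1)\mu\,r^{(n+1)}_0$ being strictly positive). Since the true sequence $\{\vc{r}^{(n)}\}$ satisfies all of~(\ref{i0:eq})--(\ref{ic:eq}) and Proposition~\ref{sum:coro}, and so do the proposed closed forms, the two coincide; in particular no appeal to the continued-fraction recursion of Proposition~\ref{prop:recursive} is required.

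I expect the only genuine effort to be the algebraic bookkeeping in the $K=2$ case: verifying that $(\lambda+n\mu+\nu_1)r^{(n)}_1-\lambda r^{(n)}_0$ collapses to $\frac{\lambda p}{n\mu(\lambda+\nu_1+n\mu)}\bigl[(\lambda+n\mu)^2+n\mu\nu_1\bigr]$, and that the factor $\nu_2+(n+1)\mu\,r^{(n+1)}_0$ regroups as $\bigl[\nu_2(\lambda+\nu_1+(n+1)\mu)+\lambda p\nu_1\bigr]/(\lambda+\nu_1+(n+1)\mu)$, so that the resulting ratio matches the stated form. These steps are elementary but require care; conceptually the $\bar r=\bar q=0$ structure triangulates the system, so there is no real obstacle.
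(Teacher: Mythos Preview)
Your argument is correct and follows essentially the same strategy as the paper: for both $K=1$ and $K=2$ the paper, like you, combines Proposition~\ref{sum:coro} with equation~(\ref{i0:eq}) to obtain $r^{(n)}_0$ and $r^{(n)}_1$ in closed form. The only minor difference is in the last step for $K=2$: the paper extracts $r^{(n)}_2$ from the $i=K$ equation~(\ref{ic:eq}), which in the $q=r=1$ case reads
\[
r^{(n)}_2=\frac{\lambda p+\lambda r^{(n)}_1}{\lambda p+\nu_2-(n+1)\mu\,r^{(n+1)}_1},
\]
and substitutes the already-known $r^{(n+1)}_1$, whereas you use the $i=1$ equation~(\ref{i1c_m_1:eq}) and substitute $r^{(n+1)}_0$. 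Both choices are valid and lead to the same formula after simplification; your algebraic bookkeeping is correct.
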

\begin{proof}
In both cases $K=1$ and $K= 2$, $r^{(n)}_0$ and $r^{(n)}_1$ are explicitly obtained using Proposition~\ref{sum:coro} and equation (\ref{i0:eq}). For the case $K=2$, 
equation (\ref{ic:eq}) yields 
\[
	r^{(n)}_2 = \frac{\lambda p + \lambda r^{(n)}_1}{\lambda p + \nu_2 - (n+1)\mu r^{(n+1)}_1}.
\]
Substituting the explicit expression of $r^{(n)}_1$ into this equation yields the announced result.
\end{proof}

\subsubsection{Taylor series expansion}

In what follows, we use the sequences $\{ \theta^{(k)}_n; n \in \bbZ_+ \}$ where $k$ implies the number of idle servers with the convention that $\theta^{(k)}_n = 0$ if $k > K$. 
It should be noted that the behavior of the system for the case of $q=r=1$ is totally different from that for the case $q<1$ or $r<1$. The order of 
$r^{(n)}_{K-k}$ is $1/n^{k+1}$ in the former while we will show that the order of $r^{(n)}_{K-k}$ in latter case is $1/n^k$. 
The methodology in this section is almost the same as that of Section~\ref{first_case:sec} except for the expansion of $r^{(n)}_K$. 

\begin{lem}\label{first_term_expandqr1:lem}
We have $\lim_{n \to \infty} n^k r^{(n)}_{i} = 0$ ($i= 0,1,\dots,K-k-1$) and 
\[
	r^{(n)}_{K-k} = \theta^{(k)}_0 \frac{1}{n^k} + o(\frac{1}{n^k}), \qquad k = 1,2,\dots,K.
\]
where 
\[
	\theta^{(1)}_0 = \frac{\lambda p}{\mu}, \qquad \theta^{(k)}_0 = \frac{\nu_{K-k+1}}{\mu} \theta^{(k-1)}_0, \qquad k = 2,3,\dots,K.
\]
\end{lem}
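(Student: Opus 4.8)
The plan is to prove Lemma~\ref{first_term_expandqr1:lem} by induction on $k$, following the same pattern as Lemma~\ref{eva_rate_vector1:lem}, after disposing of one structural difference peculiar to the case $q=r=1$. Since $\bar r=\bar q=0$ here, we have $\widetilde r^{(n+1)}_0=0$ and $\widetilde r^{(n+1)}_i=(n+1)\mu r^{(n+1)}_{i-1}$ for $i=1,\dots,K$, $b^{(n)}_K=-(\lambda p+\nu_K)$, and (since $\nu_0=0$) equation (\ref{rK_k:eq}) remains valid for $k=1,\dots,K$ with $r^{(n)}_{-1}=0$; moreover Proposition~\ref{sum:coro} now reduces to $\sum_{i=0}^{K-1} r^{(n)}_i=\lambda p/(n\mu)$, which no longer controls $r^{(n)}_K$. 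So the first step is to check that $r^{(n)}_K$ is bounded in $n$: the reduced Proposition~\ref{sum:coro} gives $n r^{(n)}_i\le \lambda p/\mu$ for $i=0,\dots,K-1$ (hence $r^{(n)}_i\to 0$ and $(n+1)\mu r^{(n+1)}_{K-1}\le\lambda p$), and rewriting (\ref{ic:eq}) as
\[
	r^{(n)}_K=\frac{\lambda p+\lambda r^{(n)}_{K-1}}{\lambda p+\nu_K-(n+1)\mu r^{(n+1)}_{K-1}}
\]
shows the denominator is $\ge\nu_K>0$ while the numerator is bounded, so $\{r^{(n)}_K\}$ is bounded (in fact it converges to $\rho$, but only the bound is needed).

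For the base case $k=1$, I would first show $n r^{(n)}_i\to 0$ for $i=0,\dots,K-2$ by an auxiliary induction on $i$. For $i=0$, (\ref{i0:eq}) with $\widetilde r^{(n+1)}_0=0$, $\nu_0=0$ gives $(\lambda+n\mu)r^{(n)}_0=\nu_1 r^{(n)}_1$, so $n r^{(n)}_0\to 0$ because $r^{(n)}_1\to 0$; for $1\le i\le K-2$, solving (\ref{i1c_m_1:eq}) for $r^{(n)}_i$ and multiplying by $n$ gives
\[
	n r^{(n)}_i=\frac{1}{\mu}\left(\lambda r^{(n)}_{i-1}-(\lambda+\nu_i)r^{(n)}_i+\nu_{i+1}r^{(n)}_{i+1}\right)+(n+1)r^{(n+1)}_{i-1}\,r^{(n)}_K,
\]
where the bracket tends to $0$ since $r^{(n)}_j\to 0$, and the last term tends to $0$ since $r^{(n)}_K$ is bounded and $(n+1)r^{(n+1)}_{i-1}\to 0$ by the auxiliary hypothesis. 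Then the reduced Proposition~\ref{sum:coro} gives $n r^{(n)}_{K-1}=\lambda p/\mu-\sum_{i=0}^{K-2}n r^{(n)}_i\to\lambda p/\mu=\theta^{(1)}_0$, which is the assertion for $k=1$.

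For the inductive step, assume the lemma for $k-1$ (with $2\le k\le K$), i.e.\ $n^{k-1}r^{(n)}_i\to 0$ for $i=0,\dots,K-k$ and $r^{(n)}_{K-k+1}=\theta^{(k-1)}_0/n^{k-1}+o(1/n^{k-1})$. Multiplying (\ref{i0:eq}) and (\ref{i1c_m_1:eq}) (at components $i=0,\dots,K-k-1$) by $n^k$ and running an auxiliary induction on $i$ exactly as in the proof of Lemma~\ref{eva_rate_vector1:lem} gives $n^k r^{(n)}_i\to 0$ for $i=0,\dots,K-k-1$; the only point requiring attention is the feedback term, which after the multiplication equals $(n+1)n^{k-1}r^{(n+1)}_{i-1}\,r^{(n)}_K$ and tends to $0$ because $r^{(n)}_K$ is bounded and $(n+1)^k r^{(n+1)}_{i-1}\to 0$ by the auxiliary hypothesis. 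Finally, multiplying (\ref{rK_k:eq}) by $n^k$, the term $\nu_{K-k+1}r^{(n)}_{K-k+1}/(n\mu)$ contributes $(\nu_{K-k+1}/\mu)\,n^{k-1}r^{(n)}_{K-k+1}\to(\nu_{K-k+1}/\mu)\,\theta^{(k-1)}_0=\theta^{(k)}_0$, while the remaining three terms on the right of (\ref{rK_k:eq}) --- $(\lambda/\mu)\,n^{k-1}r^{(n)}_{K-k-1}$, $((\lambda+\nu_{K-k})/\mu)\,n^{k-1}r^{(n)}_{K-k}$ and the feedback term --- all tend to $0$ by the inductive hypothesis (note $n^{k-1}r^{(n)}_{K-k}\to 0$ is part of it) and the bound on $r^{(n)}_K$. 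Hence $n^k r^{(n)}_{K-k}\to\theta^{(k)}_0$, which completes the induction.

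I expect the main obstacle to be precisely the handling of $r^{(n)}_K$: unlike in Section~\ref{first_case:sec}, where Proposition~\ref{sum:coro} forces $r^{(n)}_K$ to order $1/n$ and hands over its leading coefficient, here $r^{(n)}_K$ is only $O(1)$, so its boundedness has to be extracted by hand from (\ref{ic:eq}) before the induction can even start. Once that is done, the feedback term $\widetilde r^{(n+1)}_i r^{(n)}_K/(n\mu)$ is of the same order as $r^{(n)}_{i-1}$ --- one power of $n$ below $r^{(n)}_i$ --- so it never disturbs the leading term, and the recursion $\theta^{(k)}_0=(\nu_{K-k+1}/\mu)\theta^{(k-1)}_0$ comes out exactly as in the case $\bar r+r\bar q>0$, only shifted by one power of $n$ and anchored at $\theta^{(1)}_0=\lambda p/\mu$ (the component $r^{(n)}_{K-1}$) instead of at $\gamma^{(0)}_1$ (the component $r^{(n)}_K$).
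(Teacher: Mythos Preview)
Your proposal is correct and follows essentially the same approach as the paper's own proof: first use Proposition~\ref{sum:coro} (which degenerates to $\sum_{i=0}^{K-1} r^{(n)}_i=\lambda p/(n\mu)$) to get $r^{(n)}_i\to 0$ and $(n+1)\mu r^{(n+1)}_{K-1}\le\lambda p$, then bound $r^{(n)}_K$ via (\ref{ic:eq}), then run the same double induction (outer on $k$, inner on $i$) using (\ref{i0:eq}), (\ref{i1c_m_1:eq}) and the simplified (\ref{rK_k:eq}). Your write-up is in fact a bit more explicit than the paper's about the auxiliary induction on $i$ and about why the feedback term is harmless once $r^{(n)}_K$ is known to be bounded.
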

\begin{proof}

Proposition~\ref{sum:coro} becomes 
\begin{equation}\label{sumr:refined_eq}
	\sum_{k=1}^{K} r^{(n)}_{K-k} = \frac{\lambda p}{n\mu}, 
\end{equation}
from which we have $\lim_{n \to \infty} r^{(n)}_{K-k} = 0$ and $n \mu r^{(n)}_{K-k} \leq \lambda p$ ($k=1,2,\dots,K$). 

Furthermore, 
\[
	(\lambda p + \nu_K) r^{(n)}_K = \lambda r^{(n)}_{K-1} + \lambda p + (n+1) \mu r^{(n+1)}_{K-1} r^{(n)}_K < \lambda r^{(n)}_{K-1} + \lambda p + \lambda p r^{(n)}_K,
\]
where we have used $(n+1)\mu r^{(n+1)}_{K-1} \leq \lambda p$. 
Thus, we have 
\[
	\nu_K r^{(n)}_K \leq \lambda r^{(n)}_{K-1} + \lambda p 
\]
leading to the fact that $r^{(n)}_K$ is bounded because $\lim_{n \to \infty} r^{(n)}_{K-1} = 0$. This fact, (\ref{i0:eq}) and (\ref{i1c_m_1:eq}) 
show that $\lim_{n \to \infty} n r^{(n)}_i = 0$ ($i=0,1,\dots,K-1$). This and (\ref{sumr:refined_eq}) lead to 
\[
	r^{(n)}_{K-1} = \theta^{(1)}_0 \frac{1}{n} + o(\frac{1}{n}),
\]
where $\theta^{(1)}_0 = (\lambda p) / \mu$.

We prove Lemma~\ref{first_term_expandqr1:lem} by mathematical induction. Indeed, Lemma~\ref{first_term_expandqr1:lem} is true for $k=1$.
Assuming that Lemma~\ref{first_term_expandqr1:lem} is true for $k-1$, i.e.,
$\lim_{n \to \infty} n^{k-1} r^{(n)}_{i} = 0$ ($i= 0,1,\dots,K-k$) and 
\[
	r^{(n)}_{K-k+1} = \theta^{(k-1)}_0 \frac{1}{n^{k-1}} + o(\frac{1}{n^{k-1}}).
\]
for some $k \geq 2$. We will prove that Lemma~\ref{first_term_expandqr1:lem} is true for $r^{(n)}_{K-k}$.

Multiplying (\ref{i0:eq}) by $n^{k-1}$ and taking the limit yields $ \lim_{n \to \infty} n^{k} r^{(n)}_0 = 0$. 
Next, multiplying $n^{k-1}$ by (\ref{i1c_m_1:eq}) with $i=1$ and using $ \lim_{n \to \infty} n^{k} r^{(n)}_0 = 0$, 
we obtain $\lim_{n \to \infty} n^{k} r^{(n)}_1 = 0$. By repeating this process we can 
show that 
\[
	\lim_{n \to \infty} n^k r^{(n)}_{i} = 0, \qquad i=0,1,\dots,K-k-1.
\]
 Furthermore, (\ref{rK_k:eq}) is simplified to  

\begin{equation}\label{rn_Kmk:eq}
	r^{(n)}_{K-k} = \frac{\lambda}{n \mu} r^{(n)}_{K-k-1} + \frac{\nu_{K-k+1}}{n \mu} r^{(n)}_{K-k+1} - \frac{\lambda + \nu_{K-k}}{n \mu} r^{(n)}_{K-k} + \frac{(n+1) r^{(n+1)}_{K-k-1} r^{(n)}_K }{n }. 
\end{equation}

According to the assumptions of mathematical induction, we have
\begin{align*}
\frac{\lambda}{ n \mu} r^{(n)}_{K-k-1} & = \frac{\lambda}{ n^{k+1} \mu} n^k r^{(n)}_{K-k-1} = o(\frac{1}{n^{k+1}}), \\
\frac{\lambda + \nu_{K-k}}{n \mu} r^{(n)}_{K-k} & = \frac{\lambda + \nu_{K-k}}{n^k \mu} n^{k-1} r^{(n)}_{K-k} = o(\frac{1}{n^{k}}), \\
\frac{(n+1) r^{(n)}_{K-k-1} r^{(n)}_K }{n} & = \frac{n+1}{n}  \left( r^{(n)}_{K-k-1} n^k \right) r^{(n)}_K  \frac{1}{n^k} = o(\frac{1}{n^k}), \\
\frac{\nu_{K-k+1}}{n \mu} r^{(n)}_{K-k+1} & = \frac{\nu_{K-k+1}}{\mu}  \theta^{(k-1)}_0  \frac{1}{n^k} + o(\frac{1}{n^k}).
\end{align*}
Substituting these formulae into (\ref{rn_Kmk:eq}) leads to 
\[
	r^{(n)}_{K-k} = \frac{\nu_{K-k+1}}{ \mu } \theta^{(k-1)}_0 \frac{1}{n^k} + o (\frac{1}{n^k}),
\]
implying the announced result.

Finally, we separately derive a one term expansion for $r^{(n)}_K$. Equation (\ref{ic:eq}) is simplified to 
\[
    \lambda r^{(n)}_{K-1}  +  \left( -\lambda p - \nu_K + (n+1) \mu r^{(n+1)}_{K-1} \right) r^{(n)}_K   =  - \lambda p,
\]
or equivalently 
\begin{equation}\label{rnKqr1:eq}
	r^{(n)}_K = \frac{\lambda p}{\nu_K} + \frac{\lambda}{\nu_K}  r^{(n)}_{K-1} + \left( -\lambda p + (n+1) \mu r^{(n+1)}_{K-1} \right) r^{(n)}_K,
\end{equation}
which implies 
\[
	r^{(n)}_K = \theta^{(0)}_0 + o(1), 
\]
because $\lim_{n \to \infty} (-\lambda p + (n+1) \mu r^{(n+1)}_{K-1}) = 0$, where $\theta^{(0)}_0 = (\lambda p) / \nu_K$. 
\end{proof}

\begin{remark}
Our derivations in Section~\ref{second_case:sec} are also based on mathematical induction which has two steps. The first step is the formula for $r^{(n)}_{K-1}$. The second step is to derive the formula for $r^{(n)}_{K-k}$ provided that for $r^{(n)}_{K-i}$ ($i=0,1,\dots,k-1$) is true.
Proposition~\ref{sum:coro} is important for the first step while formula (\ref{rn_Kmk:eq}) is the key for the second step. We derive the formula for $r^{(n)}_K$ separately by (\ref{rnKqr1:eq}) after having formulae for $r^{(n)}_{K-k}$ ($k=1,2,\dots,K$).
\end{remark}

Lemma~\ref{first_term_expandqr1:lem} can be refined as follows.

\begin{lem}\label{first_term_expandqr1:lem_refined}
We have the following result
\[
    r^{(n)}_{K-k}  =  \theta^{(k)}_0 \frac{1}{n^k} + O (\frac{1}{n^{k+1}}), \qquad k \in \bbZ_+.
\] 
\end{lem}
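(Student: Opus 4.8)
The plan is to upgrade the $o(1/n^k)$ error terms in Lemma~\ref{first_term_expandqr1:lem} to $O(1/n^{k+1})$, proceeding by induction on $k$ in the same spirit as the proof of Lemma~\ref{first_term:lem} in the case $\bar{r}+r\bar{q}>0$. The base of the induction ($k=0$ and $k=1$) comes from the scalar equations directly: for $r^{(n)}_K$ one uses~(\ref{rnKqr1:eq}), and the key point is that $-\lambda p + (n+1)\mu r^{(n+1)}_{K-1}$ is not merely $o(1)$ but $O(1/n)$, which follows once we know $r^{(n)}_{K-1}=\theta^{(1)}_0/n + O(1/n^2)$; likewise for $r^{(n)}_{K-1}$ one uses the refined form of Proposition~\ref{sum:coro}, namely (\ref{sumr:refined_eq}), together with the $O(1/n^2)$ estimates for $r^{(n)}_{K-k}$ ($k\ge 2$) that the induction supplies. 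So the first substantive step is to establish that $r^{(n)}_{K-1}=\theta^{(1)}_0/n+O(1/n^2)$ and $r^{(n)}_K=\theta^{(0)}_0+O(1/n)$ simultaneously, which is a small fixed-point bootstrap using (\ref{i0:eq}), (\ref{i1c_m_1:eq}), (\ref{rnKqr1:eq}) and (\ref{sumr:refined_eq}).

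For the inductive step, assume the refined estimate holds for $r^{(n)}_{K-i}$ with $i=0,1,\dots,k-1$, i.e. $r^{(n)}_{K-i}=\theta^{(i)}_0 n^{-i}+O(n^{-i-1})$, and that $\lim_{n\to\infty} n^{k}r^{(n)}_i=0$ for $i\le K-k-1$ (already available from Lemma~\ref{first_term_expandqr1:lem}). The main identity to exploit is~(\ref{rn_Kmk:eq}):
\[
r^{(n)}_{K-k} = \frac{\lambda}{n\mu} r^{(n)}_{K-k-1} + \frac{\nu_{K-k+1}}{n\mu} r^{(n)}_{K-k+1} - \frac{\lambda+\nu_{K-k}}{n\mu} r^{(n)}_{K-k} + \frac{(n+1) r^{(n+1)}_{K-k-1}\, r^{(n)}_K}{n}.
\]
The second term on the right is $\frac{\nu_{K-k+1}}{\mu}\theta^{(k-1)}_0 n^{-k} + O(n^{-k-1})$ by the induction hypothesis applied to $r^{(n)}_{K-k+1}$; the third term is $O(n^{-k-1})$ once we know $r^{(n)}_{K-k}=O(n^{-k})$ from Lemma~\ref{first_term_expandqr1:lem}; and the first and fourth terms are $O(n^{-k-1})$ because $n^{k}r^{(n)}_{K-k-1}\to 0$ forces $r^{(n)}_{K-k-1}=o(n^{-k})$, hence $\frac{1}{n}r^{(n)}_{K-k-1}=o(n^{-k-1})$, and similarly $\frac{n+1}{n} r^{(n+1)}_{K-k-1} r^{(n)}_K = o(n^{-k-1})$ since $r^{(n)}_K$ is bounded. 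Collecting these gives $r^{(n)}_{K-k}=\theta^{(k)}_0 n^{-k}+O(n^{-k-1})$, closing the induction. After completing the $r^{(n)}_{K-k}$ estimates for $k=1,\dots,K$, one re-examines~(\ref{rnKqr1:eq}) to obtain the refined estimate for $r^{(n)}_K$: with $r^{(n)}_{K-1}=\theta^{(1)}_0 n^{-1}+O(n^{-2})$ we get $(n+1)\mu r^{(n+1)}_{K-1}=\lambda p + O(1/n)$, so the bracketed coefficient multiplying $r^{(n)}_K$ is $O(1/n)$, and since $r^{(n)}_K$ is bounded the whole correction is $O(1/n)$, yielding $r^{(n)}_K=\theta^{(0)}_0+O(1/n)$.

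The main obstacle, such as it is, lies in the terms involving products $r^{(n+1)}_{K-k-1}\,r^{(n)}_K$ and in the shift $n\mapsto n+1$: one must be careful that an $o(\cdot)$ estimate on the deepest index $K-k-1$ (which is all Lemma~\ref{first_term_expandqr1:lem} gives, not an $O(\cdot)$) is nonetheless strong enough, after multiplication by the bounded factor $r^{(n)}_K$ and division by $n$, to land inside the desired $O(n^{-k-1})$ class. It is, because $o(n^{-k})\cdot O(1)\cdot O(n^{-1})=o(n^{-k-1})\subseteq O(n^{-k-1})$; the replacement of $n+1$ by $n$ inside these lower-order terms only affects the constant and is absorbed. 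Everywhere else the reasoning is a routine transcription of the case already handled in Lemma~\ref{first_term:lem}, with the exponent bookkeeping $1/n^{k+1}$ replaced by $1/n^{k}$.
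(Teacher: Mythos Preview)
Your overall strategy---induction on $k$ using (\ref{rn_Kmk:eq}) for $k\ge 2$, the sum identity (\ref{sumr:refined_eq}) for $k=1$, and (\ref{rnKqr1:eq}) separately for $k=0$---matches the paper's exactly. But there is a genuine gap in your handling of the fourth term of (\ref{rn_Kmk:eq}). You write that $\tfrac{n+1}{n}\,r^{(n+1)}_{K-k-1}\,r^{(n)}_K = o(n^{-k-1})$ because ``$o(n^{-k})\cdot O(1)\cdot O(n^{-1}) = o(n^{-k-1})$'', invoking a ``division by $n$''. There is no such division: the prefactor $\tfrac{n+1}{n}$ is $O(1)$, not $O(n^{-1})$. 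From the bare limit $n^{k}r^{(n)}_{K-k-1}\to 0$ you only get $r^{(n+1)}_{K-k-1}=o(n^{-k})$, and then the fourth term is merely $o(n^{-k})\cdot O(1)=o(n^{-k})$, which is not enough to conclude $r^{(n)}_{K-k}=\theta^{(k)}_0 n^{-k}+O(n^{-k-1})$.

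The fix---and this is precisely what the paper does---is to use the \emph{full} statement of Lemma~\ref{first_term_expandqr1:lem} at level $k+1$: it gives $r^{(n)}_{K-(k+1)}=\theta^{(k+1)}_0 n^{-(k+1)}+o(n^{-(k+1)})$, hence $r^{(n+1)}_{K-k-1}=O(n^{-(k+1)})$ outright. Then the fourth term is $O(1)\cdot O(n^{-(k+1)})\cdot O(1)=O(n^{-(k+1)})$, which is exactly what you need. Your last paragraph even asserts that Lemma~\ref{first_term_expandqr1:lem} provides only an $o$-estimate at the deepest index $K-k-1$; this is the misreading---applied at $k+1$ rather than $k$, that lemma supplies a leading term and therefore an $O$-estimate. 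The same remark repairs your base case: the $O(1/n^{2})$ bounds for $r^{(n)}_{K-k}$ with $k\ge 2$ that feed into the $k=1$ step come directly from Lemma~\ref{first_term_expandqr1:lem}, not from the induction you are setting up.
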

\begin{proof}
We again prove by mathematical induction.
First, we prove Lemma~\ref{first_term_expandqr1:lem_refined} for $k=1$. Indeed, we have
\[
	r^{(n)}_{K-1} = \frac{\lambda p}{n \mu} - \sum_{k=2}^{K} r^{(n)}_{K-k}.
\]
This and Lemma~\ref{first_term_expandqr1:lem} yield the announced result for the case of $k=1$.
Assuming that for some $k \geq 2$, we have 
\[
	r^{(n)}_{K-i} = \theta^{(i)}_0 \frac{1}{n^i} + O(\frac{1}{n^{i+1}}), \qquad i = 1,2,\dots,k-1,
\]
we prove that Lemma~\ref{first_term_expandqr1:lem_refined} is also true for $r^{(n)}_{K-k}$.

Substituting $r^{(n)}_{K-k+1}$ by the assumption of mathematical induction and $r^{(n)}_{K-i}$ ($i=k+1, k, 0$) by Lemma~\ref{first_term_expandqr1:lem} into (\ref{rn_Kmk:eq}) 
and arranging the result yields Lemma~\ref{first_term_expandqr1:lem_refined} for $r^{(n)}_{K-k}$ ($k=1,2,\dots,K$). 

We can find a one term expansion for $r^{(n)}_K$ as follows. Equation (\ref{ic:eq}) is equivalent to 
\begin{eqnarray*}
	\nu_K r^{(n)}_K & = &  \lambda p + (-\lambda p + (n+1)\mu r^{(n+1)}_{K-1}) r^{(n)}_K, \\
                    & = &  \lambda p + O(\frac{1}{n}) (\theta^{(0)}_0 + o(1)), 
\end{eqnarray*}
implying 
\[
	r^{(n)}_K = \theta^{(0)}_0 + O(\frac{1}{n}).
\]

\end{proof}

We can extend the result for $m+1$ term expansion as follows. 

\begin{thm}\label{general_expand_qr1:theo}
We have 
\[
	r^{(n)}_{K-k} = \sum_{i=0}^m \theta^{(k)}_i (-1)^i \frac{1}{n^{k+i}} + O (\frac{1}{n^{k+m+1}}), \qquad m \in \bbN,
\]
where $\theta^{(k)}_i$ is recursively defined as follows. 
\begin{eqnarray*}
    \theta^{(1)}_m & = & \sum_{i=2}^{\min(K,m+1)} \theta^{(i)}_{m+1-i} (-1)^i, \\
	\theta^{(k)}_m & = & \frac{\nu_{K-k+1}}{\mu} \theta^{(k-1)}_m + \frac{\lambda}{\mu} \theta^{(k+1)}_{m-2} + \frac{\lambda + \nu_{K-k}}{\mu} \theta^{(k)}_{m-1} \nonumber \\
	               &   & \mbox{} + \sum_{j=0}^{m-1} \Phi^{(k)}_j \theta^{(0)}_{m-j-1} (-1)^{j+1}, \qquad k = 2,3,\dots,K,
\end{eqnarray*} 
where 
\[
	\Phi^{(k)}_j = \sum_{i=0}^j \theta^{(k+1)}_i (-1)^j \frac{(k+i)_{j-i}}{(j-i)!}. 
\]
Furthermore, we have
\[
	\theta^{(0)}_m = - \frac{\lambda}{\nu_K} \theta^{(1)}_{m-1} +   \frac{\mu}{\nu_K} \sum_{j=1}^m \widetilde{\Phi}^{(0)}_j \theta^{(0)}_{m-j} (-1)^j,
\]
where 
\[
	\widetilde{\Phi}^{(0)}_j = \sum_{i=1}^j \theta^{(1)}_i \frac{(i)_{j-i}}{(j-i)!} (-1)^j, \qquad j = 1,2,\dots,m.
\]
\end{thm}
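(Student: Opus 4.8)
The plan is to replay the mathematical-induction scheme of Lemmas~\ref{first_term_expandqr1:lem} and~\ref{first_term_expandqr1:lem_refined}, carried to arbitrary order exactly as in the proof of Theorem~\ref{main_nonpersistent:theo}. The outer induction is on $m$; the base case $m=0$ is precisely Lemma~\ref{first_term_expandqr1:lem_refined}. For the inductive step I assume every $r^{(n)}_{K-k}$ ($k=0,1,\dots,K$) admits the claimed expansion with $m-1$ in place of $m$, and upgrade each of them to order $m$. The structural novelty relative to Section~\ref{first_case:sec} is that here $r^{(n)}_K$ is $O(1)$ rather than $O(1/n)$, so Proposition~\ref{sum:coro} in the form~(\ref{sumr:refined_eq}) no longer involves $r^{(n)}_K$ and cannot be used to expand it; $r^{(n)}_K$ must instead be extracted from~(\ref{rnKqr1:eq}). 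The step therefore splits into three stages, handled in the order $r^{(n)}_{K-1}$, then $r^{(n)}_{K-k}$ for $k=2,\dots,K$, then $r^{(n)}_K$.

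\textbf{Stage one.} Rewrite~(\ref{sumr:refined_eq}) as $r^{(n)}_{K-1}=\lambda p/(n\mu)-\sum_{k=2}^{K}r^{(n)}_{K-k}$ and substitute the outer induction hypothesis; since $r^{(n)}_{K-k}$ begins at order $1/n^k$ with $k\ge 2$, only its $(m-1)$-term expansion is needed to read off the coefficient of $1/n^{1+i}$ for every $i\le m$. Collecting these coefficients produces $\theta^{(1)}_i$ as stated, and at $i=0$ recovers $\theta^{(1)}_0=\lambda p/\mu$.

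\textbf{Stage two.} Run a nested induction on $k$ from $2$ to $K$ based on the scalar recursion~(\ref{rn_Kmk:eq}). Into its right-hand side I feed the $m$-term expansion of $r^{(n)}_{K-k+1}$ (available from the nested hypothesis, and carrying one more term than $r^{(n)}_{K-k}$ because it starts one order earlier), the $(m-1)$-term expansions of $r^{(n)}_{K-k}$ and $r^{(n)}_{K-k-1}$, and the expansions of $(n+1)r^{(n+1)}_{K-k-1}$ and $r^{(n)}_K$; terms with index exceeding $K$ vanish by the convention $\theta^{(k)}_n=0$ for $k>K$. The shift from $n+1$ to $n$ produces factors $(1+1/n)^{-a}$ that are expanded by~(\ref{taylor:expand}); convolving the resulting series yields $\Phi^{(k)}_j=\sum_{i=0}^{j}\theta^{(k+1)}_i(-1)^j (k+i)_{j-i}/(j-i)!$, and equating the coefficients of $1/n^{k+m}$ in~(\ref{rn_Kmk:eq}) gives the announced recursion for $\theta^{(k)}_m$.

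\textbf{Stage three.} Expand $(n+1)\mu r^{(n+1)}_{K-1}$ using the stage-one expansion of $r^{(n)}_{K-1}$ together with~(\ref{taylor:expand}); its constant term equals $\mu\theta^{(1)}_0=\lambda p$, so $-\lambda p+(n+1)\mu r^{(n+1)}_{K-1}=O(1/n)$ with coefficients governed by $\widetilde{\Phi}^{(0)}_j$. Substituting this, the $(m-1)$-term expansion of $r^{(n)}_K$ and the fresh expansion of $r^{(n)}_{K-1}$ into~(\ref{rnKqr1:eq}), and matching the coefficient of $1/n^m$, produces the recursion for $\theta^{(0)}_m$. The only genuine difficulty is bookkeeping: one must verify that each truncated remainder is of strictly lower order than the coefficient being extracted, and keep the Pochhammer indices and alternating signs aligned through the two nested convolutions — the analytic substance is carried entirely by Proposition~\ref{sum:coro}, the scalar identities~(\ref{rn_Kmk:eq}) and~(\ref{rnKqr1:eq}), and the elementary expansion~(\ref{taylor:expand}).
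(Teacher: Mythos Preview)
Your proposal is correct and follows essentially the same approach as the paper: an outer induction on $m$ with an inner three-stage sweep (first $r^{(n)}_{K-1}$ via~(\ref{sumr:refined_eq}), then $r^{(n)}_{K-k}$ for $k\ge 2$ via~(\ref{rn_Kmk:eq}) and~(\ref{taylor:expand}), and finally $r^{(n)}_K$ separately via~(\ref{rnKqr1:eq})). The only cosmetic difference is that the paper takes $m=1$ as the explicit base case and works it out in full before stating the general step, whereas you take $m=0$ (Lemma~\ref{first_term_expandqr1:lem_refined}) as the base and absorb the $m=1$ case into the induction.
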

\begin{proof}
We prove Theorem~\ref{general_expand_qr1:theo} using mathematical induction. 

First, we prove that Theorem~\ref{general_expand_qr1:theo} is true for $m=1$ also by mathematical induction.
We have 
\[
	r^{(n)}_{K-1} = \frac{\lambda p}{n \mu} - \sum_{i=2}^{K} r^{(n)}_{K-i}.
\]
Substituting $r^{(n)}_{K-i}$ by Lemma~\ref{first_term_expandqr1:lem_refined} into the right hand side of the above equation 
yields 
\[
	r^{(n)}_{K-1} = \theta^{(1)}_0 \frac{1}{n} - \theta^{(1)}_1 \frac{1}{n^2} + O (\frac{1}{n^3}),
\]
where $\theta^{(1)}_1 = \theta^{(2)}_0$. For some $k \geq 2$, assuming that $r^{(n)}_{K-i}$ ($i=1,2,\dots,k-1$) has 
two term expansion. We prove that $r^{(n)}_{K-i}$ also has two term expansion. Indeed, we have 

\begin{eqnarray*}
r^{(n)}_{K-k-1} & = & \theta^{(k+1)}_0 \frac{1}{n^{k+1}} + O(\frac{1}{n^{k+2}}), \\
r^{(n)}_{K-k}   & = & \theta^{(k)}_0 \frac{1}{n^k} + O (\frac{1}{n^{k+1}}), \\
r^{(n)}_{K-k+1}   & = & \theta^{(k-1)}_0 \frac{1}{n^{k-1}} - \theta^{(k-1)}_1 \frac{1}{n^{k}} + O (\frac{1}{n^{k+1}}), \\
(n+1)r^{(n+1)}_{K-k-1} & = & \theta^{(k+1)}_0 \frac{1}{(n+1)^{k}} + O(\frac{1}{n^{k+1}}) = \theta^{(k+1)}_0 \frac{1}{n^{k}} + O(\frac{1}{n^{k+1}}), \\
\frac{r^{(n)}_K}{n} & = & \theta^{(0)}_0 \frac{1}{n} + O(\frac{1}{n^2}).  
\end{eqnarray*}
Substituting these quantities into (\ref{rn_Kmk:eq}) yields the announced result for $r^{(n)}_{K-k}$.

We obtain two term expansion for $r^{(n)}_K$ as follows.
\begin{eqnarray*}
	\nu_K r^{(n)}_K & = &  \lambda p + \lambda r^{(n)}_{K-1} +  (-\lambda p + (n+1)\mu r^{(n+1)}_{K-1}) r^{(n)}_K, \\
                    & = &  \lambda p + \lambda \theta^{(1)}_0 \frac{1}{n} -\theta^{(1)}_1 \frac{1}{n^2} + O(\frac{1}{n^3}) + \left( - \mu \theta^{(1)}_1 \frac{1}{n} + O(\frac{1}{n^2}) \right) 
							\left( \theta^{(0)}_0 + O(\frac{1}{n}) \right), \\
                    & = &  \lambda p + \lambda \theta^{(1)}_0 \frac{1}{n} 
							- \mu \theta^{(0)}_0 \theta^{(1)}_1 \frac{1}{n} + O(\frac{1}{n^2}),
\end{eqnarray*}
implying that 
\[
	r^{(n)}_K = \theta^{(0)}_0 - \theta^{(0)}_1 \frac{1}{n} + O(\frac{1}{n^2}),
\] 
where 
\begin{eqnarray*}
	\theta^{(0)}_1 & = & \frac{\mu}{\nu_K} \theta^{(0)}_0 \theta^{(1)}_1 - \frac{\lambda}{\nu_K} \theta^{(1)}_0 =  - \rho^2 \frac{ p \nu_{K-1} - \nu_K}{\mu}.
\end{eqnarray*}
Thus we have proved that Theorem~\ref{general_expand_qr1:theo} is true for $m=1$.

Next, assuming that Theorem~\ref{general_expand_qr1:theo} is true for $m$ term expansion for some $m \geq 2$, we will prove that Theorem~\ref{general_expand_qr1:theo} is 
also true for $(m+1)$ term expansion. We can prove this in a similar manner as used above for the case $m=1$. Indeed, we again use 
\begin{eqnarray*}
	r^{(n)}_{K-1} &  = & \frac{\lambda p}{n \mu} - \sum_{i=2}^{K} r^{(n)}_{K-i} \\
	              &  = & \frac{\lambda p}{n \mu} - \sum_{i=2}^{K} \sum_{j=0}^{m-1} \theta^{(i)}_j \frac{1}{n^{i+j}} (-1)^j,	  
\end{eqnarray*}
according to the assumption of $m$ term expansion. Collecting the coefficients of $1/n^{m+1}$ yields Theorem~\ref{general_expand_qr1:theo} for $k=1$. 

Assuming that $r^{(n)}_{K-i}$ ($i=1,2,\dots,k-1$) has $(m+1)$ term expansion, we prove that $r^{(n)}_{K-k}$ does too. 
Indeed, according to the assumption of mathematical induction, we have 

\begin{eqnarray*}
r^{(n)}_{K-k-1} & = & \sum_{i=0}^{m-1} \theta^{(k+1)}_i (-1)^i \frac{1}{n^{k+1+i}}  + O(\frac{1}{n^{k+m+1}}), \\
r^{(n)}_{K-k} & = & \sum_{i=0}^{m-1} \theta^{(k)}_i (-1)^i \frac{1}{n^{k+i}}  + O(\frac{1}{n^{k+m}}), \\
r^{(n)}_{K-k+1} & = & \sum_{i=0}^{m} \theta^{(k-1)}_i (-1)^i \frac{1}{n^{k-1+i}} + O(\frac{1}{n^{k+m}}), \\
r^{(n)}_K & = & \sum_{i=0}^{m-1} \theta^{(0)}_i (-1)^i \frac{1}{n^i} + O(\frac{1}{n^m}), \\
(n+1) r^{(n+1)}_{K-k-1} & = &  \sum_{i=0}^{m-1} \theta^{(k+1)}_i (-1)^i \frac{1}{(n+1)^{k+i}} +  O(\frac{1}{n^{k+m}}) \\ 
                        & = &  \sum_{i=0}^{m-1} \theta^{(k+1)}_i (-1)^i \frac{1}{n^{k+i}} (1+\frac{1}{n})^{-(k+i)} +  O(\frac{1}{n^{k+m}}) \\ 
                           & = &  \sum_{j=0}^{m-1} \Phi^{(k)}_j \frac{1}{n^{k+j}} + O(\frac{1}{n^{k+m}}), \\
\frac{r^{(n)}_K}{n} & = & \sum_{i=0}^{m-1} \theta^{(0)}_i (-1)^i \frac{1}{n^{i+1}} + O(\frac{1}{n^{m+1}}),
\end{eqnarray*}
where we have used Taylor series expansion (\ref{taylor:expand}) in the seventh equality. It should be noted here that $r^{(n)}_{K-k+1}$ has $(m+1)$ term expansion according to the assumption of mathematical induction.
Substituting these formulae into (\ref{rn_Kmk:eq}) and collecting the coefficient of $1/n^{k+m}$ yields the announced 
$(m+1)$ term expansion for $r^{(n)}_{K-k}$. 

Finally, we derive $(m+1)$ term expansion for $r^{(n)}_K$.
We have 
\begin{equation}\label{rnK_trans:eq}
	r^{(n)}_K = \theta^{(0)}_0 + \frac{\lambda}{\nu_K} r^{(n)}_{K-1} + \frac{(-\lambda p + (n+1)\mu r^{(n+1)}_{K-1})r^{(n)}_K}{\nu_K}
\end{equation}
Furthermore, we have 
\begin{eqnarray*}
-\lambda p + (n+1)\mu r^{(n+1)}_{K-1} & = &  \sum_{i=1}^{m} \theta^{(1)}_i (-1)^i \frac{1}{(n+1)^{i}} +  O(\frac{1}{n^{m+1}}), \\
                                      & = &  \sum_{j=1}^{m} \widetilde{\Phi}^{(0)}_j \frac{1}{n^i} +  O(\frac{1}{n^{m+1}}),             
\end{eqnarray*}
where 
\[
	\widetilde{\Phi}^{(0)}_j = \sum_{i=1}^j \theta^{(1)}_i \frac{(i)_{j-i}}{(j-i)!} (-1)^j, \qquad j=1,2,\dots,m,
\]
and 
\[
	r^{(n)}_K    =  \sum_{i=0}^{m-1} \theta^{(0)}_i (-1)^i \frac{1}{n^i} + O(\frac{1}{n^m}).
\]
Substituting these formulae into (\ref{rnK_trans:eq}) and collecting the coefficients of $1/n^m$ yields the announced result.
\end{proof}

Finally, we obtain the following asymptotic results.
\begin{prop}\label{asymptotic_qr1:theo}
There exist two positive coefficients $D_1^{(0)}$ and $D_2^{(0)}$ independent of $n$ such that
\[
	D_1^{(0)} n^{\alpha}  \rho^n \leq \pi_{K,n} \leq D_2^{(0)} n^{\alpha}  \rho^n, \qquad 
	n \to \infty.
\]
where $\rho = \frac{\lambda p}{\nu_K}$ and $\alpha = \frac{\rho (\nu_{K}  - p \nu_{K-1} )}{p \mu}$.
\end{prop}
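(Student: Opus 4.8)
The plan is to convert the statement into one about an infinite product. Since the first $K$ rows of $\vc{R}^{(n)}$ vanish and its last row is $\vc{r}^{(n)}$, the relation $\vc{\pi}_n=\vc{\pi}_{n-1}\vc{R}^{(n)}$ reduces to $\pi_{K,n}=\pi_{K,n-1}\,r^{(n)}_K$, whence
\[
\pi_{K,n}=\pi_{K,0}\prod_{j=1}^{n}r^{(j)}_K .
\]
Each factor is strictly positive: solving (\ref{ic:eq}) for $q=r=1$ gives $r^{(n)}_K=(\lambda p+\lambda r^{(n)}_{K-1})/(\lambda p+\nu_K-(n+1)\mu r^{(n+1)}_{K-1})$, and the denominator is at least $\nu_K>0$ by the bound $(n+1)\mu r^{(n+1)}_{K-1}\le\lambda p$ from Lemma~\ref{first_term_expandqr1:lem}. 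Hence the product is well defined and positive for every $n$.

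First I would pin down the precise behaviour of the generic factor. Applying Theorem~\ref{general_expand_qr1:theo} with $m=1$ and $k=0$ gives $r^{(n)}_K=\theta^{(0)}_0-\theta^{(0)}_1/n+O(1/n^{2})$, and substituting the initial values $\theta^{(0)}_0=\lambda p/\nu_K=\rho$, $\theta^{(1)}_0=\lambda p/\mu$ and $\theta^{(1)}_1=\theta^{(2)}_0=(\nu_{K-1}/\mu)\theta^{(1)}_0$ into the recursion there yields $-\theta^{(0)}_1/\rho=\alpha$, so that
\[
r^{(n)}_K=\rho\Bigl(1+\frac{\alpha}{n}+O\bigl(\tfrac{1}{n^{2}}\bigr)\Bigr);
\]
in particular there are $N_0\in\bbN$ and $C_0>0$ with $\bigl|\,r^{(n)}_K/\rho-1-\alpha/n\,\bigr|\le C_0/n^{2}$ for all $n\ge N_0$.

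Next I would take logarithms of the product. Writing $\pi_{K,n}=\pi_{K,0}\,\rho^{n}\prod_{j=1}^{n}(r^{(j)}_K/\rho)$, splitting off the finitely many factors with $j<N_0$ (which together form a fixed positive constant), and using $\log(1+x)=x+O(x^{2})$ with the bound above, one obtains $\log(r^{(j)}_K/\rho)=\alpha/j+O(1/j^{2})$ for $j\ge N_0$. Since $\sum_{j=1}^{n}1/j-\log n$ converges to a finite limit and $\sum_{j}O(1/j^{2})$ converges absolutely, summation gives $\log\pi_{K,n}=n\log\rho+\alpha\log n+c+o(1)$ for a finite constant $c$; exponentiating yields $\pi_{K,n}=D\,n^{\alpha}\rho^{n}(1+o(1))$ with $D>0$, which is stronger than the claimed two-sided estimate (take any $D_1^{(0)}<D<D_2^{(0)}$ and $n$ large). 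The only delicate point is the bookkeeping of the remainders: one has to check that the $O(1/n^{2})$ term in Theorem~\ref{general_expand_qr1:theo} is uniform in $n$, so that the tail series of squares converges, and that the initial block of factors contributes only a fixed constant --- both being immediate from the positivity and finiteness of each $r^{(j)}_K$. Alternatively the same conclusion follows from a Gauss/Raabe-type convergence criterion for infinite products, or, exactly as in the proof of Proposition~\ref{asymptotic:theo}, by invoking the corresponding asymptotic result of Liu et al.~\cite{Binliu11}.
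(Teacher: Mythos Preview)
Your argument is correct. It differs from the paper's proof in one essential respect: the paper, after deriving the three--term expansion
\[
r^{(n)}_K=\rho\Bigl(1+\frac{\alpha}{n}+\frac{\beta}{n^{2}}+O\bigl(\tfrac{1}{n^{3}}\bigr)\Bigr),
\]
simply quotes Theorem~3.2 of Liu and Zhao~\cite{Binliu10} (not~\cite{Binliu11}, which handles the nonpersistent case of Proposition~\ref{asymptotic:theo}) as a black box to conclude the two--sided estimate. You instead unpack what that theorem does: write $\pi_{K,n}=\pi_{K,0}\prod_{j\le n}r^{(j)}_K$, take logarithms, and use $\sum_{j\le n}1/j=\log n+\gamma+o(1)$ together with the summability of the $O(1/j^{2})$ errors. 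This is more elementary, requires only the two--term expansion of $r^{(n)}_K$ rather than three, and in fact yields the stronger conclusion $\pi_{K,n}\sim D\,n^{\alpha}\rho^{n}$ with an explicit $D>0$, from which the paper's two--sided bound follows immediately. Your caveats about the uniformity of the $O(1/n^{2})$ remainder and the positivity of each factor $r^{(j)}_K$ are the right ones, and both are settled by the results you cite.
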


\begin{proof}
Using the results derived in Lemma~\ref{first_term_expandqr1:lem_refined} and Theorem~\ref{general_expand_qr1:theo}, 
we obtain 
\[
	r^{(n)}_K = \theta^{(0)}_0 - \theta^{(0)}_1 \frac{1}{n} + \theta^{(0)}_2 \frac{1}{n^2} + O(\frac{1}{n^3}),
\]
where
\[
  \theta^{(0)}_0  =  \rho, \qquad \theta^{(0)}_1  =  -\rho^2 \frac{\nu_K  - p \nu_{K-1} }{p \mu}.
\]
%
We further transform as follows. 
\[
	r^{(n)}_K = \rho \left( 1 + \alpha \frac{1}{n} + \beta \frac{1}{n^2} + O ( \frac{1}{n^3} ) \right ),
\]
where 
\[
	\alpha = \rho \frac{\nu_K  -  p \nu_{K-1} }{\mu p}, \qquad \beta = \frac{\theta^{(0)}_2}{\rho}.
\]
Using Theorem 3.2 in Liu and Zhao~\cite{Binliu10}, we obtain the announced result.
\end{proof}

\begin{coro}\label{piKminusk:coro}
There exist $D^{(k)}_1 > 0$ and $D^{(k)}_2 >0$ ($k = 1,2,\dots,K$) independent of $n$ such that 
\[
	D^{(k)}_1 n^{\alpha-k}  \rho^n \leq \pi_{K-k,n} \leq D^{(k)}_2 n^{\alpha-k}  \rho^n, \quad n \to \infty.
\]
\end{coro}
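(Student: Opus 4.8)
The plan is to copy, almost verbatim, the argument used for Corollary~\ref{asymptotic:coro_qr_l_1}, now feeding it the $q=r=1$ ingredients. First I would exploit the structure of $\vc{R}^{(n)}$: since its first $K$ rows vanish, the recursion $\vc{\pi}_n = \vc{\pi}_{n-1}\vc{R}^{(n)}$ collapses to $\vc{\pi}_n = \pi_{K,n-1}\,\vc{r}^{(n)}$, so componentwise $\pi_{K-k,n} = \pi_{K,n-1}\,r^{(n)}_{K-k}$ and $\pi_{K,n} = \pi_{K,n-1}\,r^{(n)}_K$. The common factor $\pi_{K,n-1}$ cancels, giving
\[
\frac{\pi_{K-k,n}}{\pi_{K,n}} = \frac{r^{(n)}_{K-k}}{r^{(n)}_K}, \qquad k=1,2,\dots,K.
\]
Thus it suffices to control this ratio and then multiply by the bound on $\pi_{K,n}$ already provided by Proposition~\ref{asymptotic_qr1:theo}.

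Next I would pin down the order of the ratio using the refined one-term expansions. By Lemma~\ref{first_term_expandqr1:lem_refined}, $r^{(n)}_{K-k} = \theta^{(k)}_0\,n^{-k} + O(n^{-k-1})$ and $r^{(n)}_K = \theta^{(0)}_0 + O(n^{-1})$ with $\theta^{(0)}_0 = \rho > 0$. Since $\theta^{(1)}_0 = \lambda p/\mu > 0$ and $\theta^{(k)}_0 = (\nu_{K-k+1}/\mu)\,\theta^{(k-1)}_0$, each leading coefficient $\theta^{(k)}_0$ is strictly positive (this uses $\nu_j>0$ for $j\ge 1$, which holds for the standard multiserver rates; in a degenerate case where some $\nu_j=0$ the same conclusion holds with the leading exponent adjusted). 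Consequently there are constants $0<c^{(k)}_1\le c^{(k)}_2$ and an index $N_k$ with
\[
c^{(k)}_1\,n^{-k} \le \frac{r^{(n)}_{K-k}}{r^{(n)}_K} \le c^{(k)}_2\,n^{-k}, \qquad n\ge N_k.
\]

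Finally I would combine the two displays with Proposition~\ref{asymptotic_qr1:theo}, $D^{(0)}_1 n^{\alpha}\rho^n \le \pi_{K,n} \le D^{(0)}_2 n^{\alpha}\rho^n$ for large $n$, to conclude
\[
c^{(k)}_1 D^{(0)}_1\, n^{\alpha-k}\rho^n \le \pi_{K-k,n} \le c^{(k)}_2 D^{(0)}_2\, n^{\alpha-k}\rho^n,
\]
so the claim holds with $D^{(k)}_1 = c^{(k)}_1 D^{(0)}_1$ and $D^{(k)}_2 = c^{(k)}_2 D^{(0)}_2$. I do not expect any genuine obstacle: all the analytic content is already in Lemma~\ref{first_term_expandqr1:lem_refined} and Proposition~\ref{asymptotic_qr1:theo}, and the only point deserving a line of care is the strict positivity of the leading coefficients $\theta^{(k)}_0$, which is what secures the lower bound (the upper bound never needs it).
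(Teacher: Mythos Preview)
Your proposal is correct and follows essentially the same approach as the paper: form the ratio $\pi_{K-k,n}/\pi_{K,n}=r^{(n)}_{K-k}/r^{(n)}_K$ from $\vc{\pi}_n=\pi_{K,n-1}\vc{r}^{(n)}$, then invoke Lemma~\ref{first_term_expandqr1:lem_refined} and Proposition~\ref{asymptotic_qr1:theo}. You are simply more explicit about the positivity of $\theta^{(k)}_0$ and the resulting two-sided bound on the ratio, which the paper leaves implicit.
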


\begin{proof}
We again use the formula 
\[
	\vc{\pi}_n = \pi_{K,n-1} \vc{r}^{(n)}, 
\]
in order to have
\[
	\frac{\pi_{K-k,n}}{\pi_{K,n}} = \frac{r^{(n)}_{K-k}}{r^{(n)}_{K}}.
\]
Thus, Proposition~\ref{asymptotic_qr1:theo} and Lemma~\ref{first_term_expandqr1:lem_refined} imply the announced result.
\end{proof}

\section{Numerical examples}\label{numerical:exam} 
In this section, we present some numerical examples to show the accuracy of the Taylor series expansions and the tail asymptotic behavior of the stationary distribution. Section~\ref{effect:rho} shows the influence of $\rho^* = \lambda/\nu_K$ on the relative errors of the rate matrices. 
The case of nonpersistent retrial customers, i.e., $q \neq 1$ or $r \neq 1$, is presented in Section~\ref{non-persistent:sec} while the case of persistent retrial customers, i.e., $q, r \neq 1$ is shown in Section~\ref{persistent:sec}.

\subsection{Effect of the traffic intensity}\label{effect:rho}
In this section, we present numerical examples to show the accuracy of Taylor series expansions. 
We consider the following parameter set: $\mu = 1$, $K = c = 5$, $r = 0.5$, $p = 0.7$, $q = 0.7$ and $\nu_i = i$ $(i=0,1,\dots,c)$.
The arrival rate $\lambda$ is calculated from the traffic intensity $\rho^* = \lambda/\nu_K$ given in Tables~\ref{case1:N100} and~\ref{case1:N1000}. Let $\vc{r}^{(N)}_k$ denote the last 
row of $R^{(N)}_k$. Using Proposition~\ref{prop:rn}, we calculate the approximation $R^{(N)}_k$ to $R^{(N)}$ where $k$ is the smallest natural number such that $||\vc{r}^{(N)}_{k} - \vc{r}^{(N)}_{k-1}|| < 10^{-10}$ and $||x|| = \sum_{0}^K |x_i|$ for $x = (x_0,x_1,\dots,x_K)$. Since the difference between $\vc{r}^{(N)}_{k-1}$ 
and $\vc{r}^{(N)}_{k}$ is small enough, we numerically consider $\vc{r}^{(N)}_{k}$ as the exact value of $\vc{r}^{(N)}$. At the same time, let $\vc{r}^{(N,1)}$, $\vc{r}^{(N,2)}$ and $\vc{r}^{(N,3)}$ denote the first, the second and the third Taylor series expansions of $\vc{r}^{(N)}$. In Table~\ref{case1:N100}  ($N=100$) and Table~\ref{case1:N1000} ($N=1000$), the second, the third and the fourth columns represent the 
relative errors, i.e., 
\[ 
\frac{||\vc{r}^{(N,1)} - \vc{r}^{(N)}_k||}{||\vc{r}^{(N)}_k||}, \quad
\frac{||\vc{r}^{(N,2)} - \vc{r}^{(N)}_k||}{||\vc{r}^{(N)}_k||},  \quad
\frac{||\vc{r}^{(N,3)} - \vc{r}^{(N)}_k||}{ ||\vc{r}^{(N)}_k||},
\]
respectively.
We observe that Taylor series expansions give a fairly good accuracy, especially for the case $N=1000$. 
We also observe from Tables~\ref{case1:N100} and~\ref{case1:N1000} that the orders of the relative errors are almost insensitive to the traffic intensity $\rho^*$.

\begin{table}[htb]
\caption{Relative error of $\vc{r}^{(N)}$ for the case $\bar{r} + r \bar{q} > 0$ ($N=100$).}
\label{case1:N100}
\begin{center}
\begin{tabular}{|c|c|c|c|}
\hline
Traffic intensity ($\rho^*$)  &   First order   &   Second order  &  Third order \\
\hline
0.1     & 0.078979804     &    0.006347302  &       0.000512522 \\
\hline
0.2     & 0.078922701     &    0.006528123  &       0.000548023 \\
\hline
0.3     & 0.078865830     &    0.006708717  &       0.000584347 \\
\hline
0.4     & 0.078809192     &    0.006889085  &       0.000621491 \\
\hline
0.5     & 0.078752783     &    0.007069227  &       0.000659455 \\
\hline
0.6     & 0.078696602     &    0.007249146  &       0.000698238 \\
\hline
0.7     & 0.078640650     &    0.007428842  &       0.000737837 \\
\hline
0.8     & 0.078584923     &    0.007608316  &       0.000778252 \\
\hline
0.9     & 0.078529420     &    0.007787571  &       0.000819482 \\
\hline
\end{tabular}
\end{center}
\end{table}

\begin{table}[htb]
\caption{Relative error of $\vc{r}^{(N)}$ for the case $\bar{r} + r \bar{q} > 0$ ($N=1000$).}
\label{case1:N1000}
\begin{center}
\begin{tabular}{|c|c|c|c|}
\hline
Traffic intensity ($\rho^*$)  &   First order   &   Second order  &  Third order \\
\hline
0.1     & 0.007711805     &    0.000061185  &       0.000000491 \\
\hline 
0.2     & 0.007711190     &    0.000062962  &       0.000000525 \\
\hline
0.3     & 0.007710574     &    0.000064739  &       0.000000560 \\
\hline
0.4     & 0.007709959     &    0.000066516  &       0.000000596 \\
\hline
0.5     & 0.007709344     &    0.000068292  &       0.000000633 \\
\hline
0.6     & 0.007708729     &    0.000070068  &       0.000000671 \\
\hline
0.7     & 0.007708115     &    0.000071844  &       0.000000709 \\
\hline
0.8     & 0.007707500     &    0.000073620  &       0.000000748 \\
\hline
0.9     & 0.007706887     &    0.000075395  &       0.000000788 \\
\hline
\end{tabular}
\end{center}
\end{table}

\subsection{Nonpersistent retrial customers}\label{non-persistent:sec}
In this section, we present numerical results for the non-persistent case, i.e., $q \neq 1$ or $r \neq 1$.
In particular, we consider the following parameter set: $p=0.7, q=0.7, r=0.5,  K =  c = 10$, $\nu_i = i$ ($i = 0,1,\dots,c$), $\mu = 1$ and $\epsilon =  10^{-10}$.
Figure~\ref{taylor_series_expand_rho050920:fig} represents the Taylor expansions and the exact value of $r^{(n)}_K$ against $n$ for the cases: $\rho^* = 0.5, 0.9$ and 2.0. We observe that the accuracy of the Taylor expansion increases with $m$ (the number of terms) and also with $n$ (the number of customers in the orbit) as is expected.

Figure~\ref{relative_error_rho050920:fig} shows the relative error or the Taylor expansions for $r^{(n)}_K$ against $n$ for the cases: $\rho^* = 0.5, 0.9$ and 2.0. 
The exact value for $r^{(n)}_K$ is calculated using Proposition~\ref{prop:rn}. In particular, we approximate $\vc{r}^{(n)}$ by $\vc{r}^{(n)}_{2^{k+1}-1}$, where $k$ is the smallest natural number such that $||\vc{r}^{(n)}_{2^{k+1}-1} - \vc{r}^{(n)}_{2^{k}-1}|| < 10^{-10}$. 
We observe that the relative errors of the Taylor expansions decrease with the number of terms and also with $n$ as expected. These observations verify the correctness of our Taylor series expansions.

We present a numerical example to show the tail asymptotic behavior for the joint stationary distribution. 
The stationary distribution is obtained using the methodology presented by Phung-Duc~\cite{phung-duc13}. We first approximate $\vc{r}^{(N)}$ ($N=300$) by 
$\vc{r}^{(n)}_{2^{k+1}-1}$, where $k$ is the smallest natural number such that $||\vc{r}^{(n)}_{2^{k+1}-1} - \vc{r}^{(n)}_{2^{k}-1}|| < 10^{-10}$. 
We then obtain an approximation to the stationary distribution by that of the censored Markov chain on the state space $\{ l(0),l(1),\dots, l(N) \}$ whose infinitesimal generator is given by (\ref{sensored:chain}). Phung-Duc~\cite{phung-duc13} presents an algorithm with the computational complexity of $O(K)$ for solving this Markov chain. 

Using the joint stationary distribution, we plot 
\[
	\frac{\pi_{K,n} n!}{(\gamma^{(0)}_1)^n} 
\]
against $n$. We consider two cases: i) $p=0.7, q=0.7, r=0.5$ (two types of nonpersistent customers) and ii) $p = 0.7, q = 0.35$ and $r=1$ (one type of nonpersistent customers) while keeping $\lambda$, $K=c=10$, $\nu_i$ ($i = 0.1,\dots,c$) and $\mu=1$ the same.
Cases i) and ii) are equivalent in the sense that the probability that a blocked retrial customer arrives at the servers again is 0.35 in both cases. We observe that the curves are asymptoticly linear. This fact agrees with Proposition~\ref{asymptotic:theo}.  
Figure~\ref{tail_asymptotic_nonpersist1_rho0509:fig} shows the curves for $\rho^* = 0.5, 0.9$ while Figure~\ref{tail_asymptotic_nonpersist1_rho2030:fig} presents the curves for $\rho^* = 2.0, 3.0$.
We observe from Figures~\ref{tail_asymptotic_nonpersist1_rho0509:fig} and \ref{tail_asymptotic_nonpersist1_rho2030:fig} that the probability for case i) is greater than that of case ii) when the number of customers in the orbit is large enough. 
The reason is that in average retrial customers in case i) stay in the orbit longer than those in case ii).

\begin{figure}[tbhp]
\begin{center}
\includegraphics[scale=0.8]{./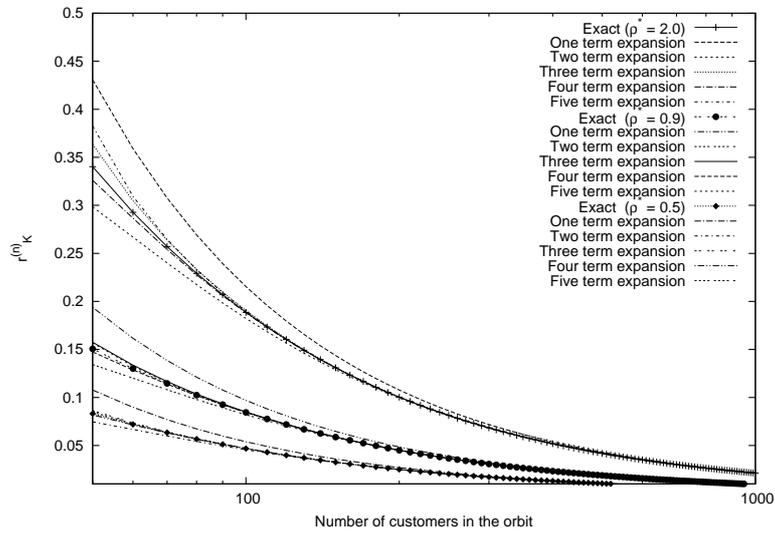} 
\caption{Taylor expansion for $r^{(n)}_K$ vs. $n$.}
\label{taylor_series_expand_rho050920:fig}
\end{center}
\end{figure}
\begin{figure}[tbhp]
\begin{center}
\includegraphics[scale=0.8]{./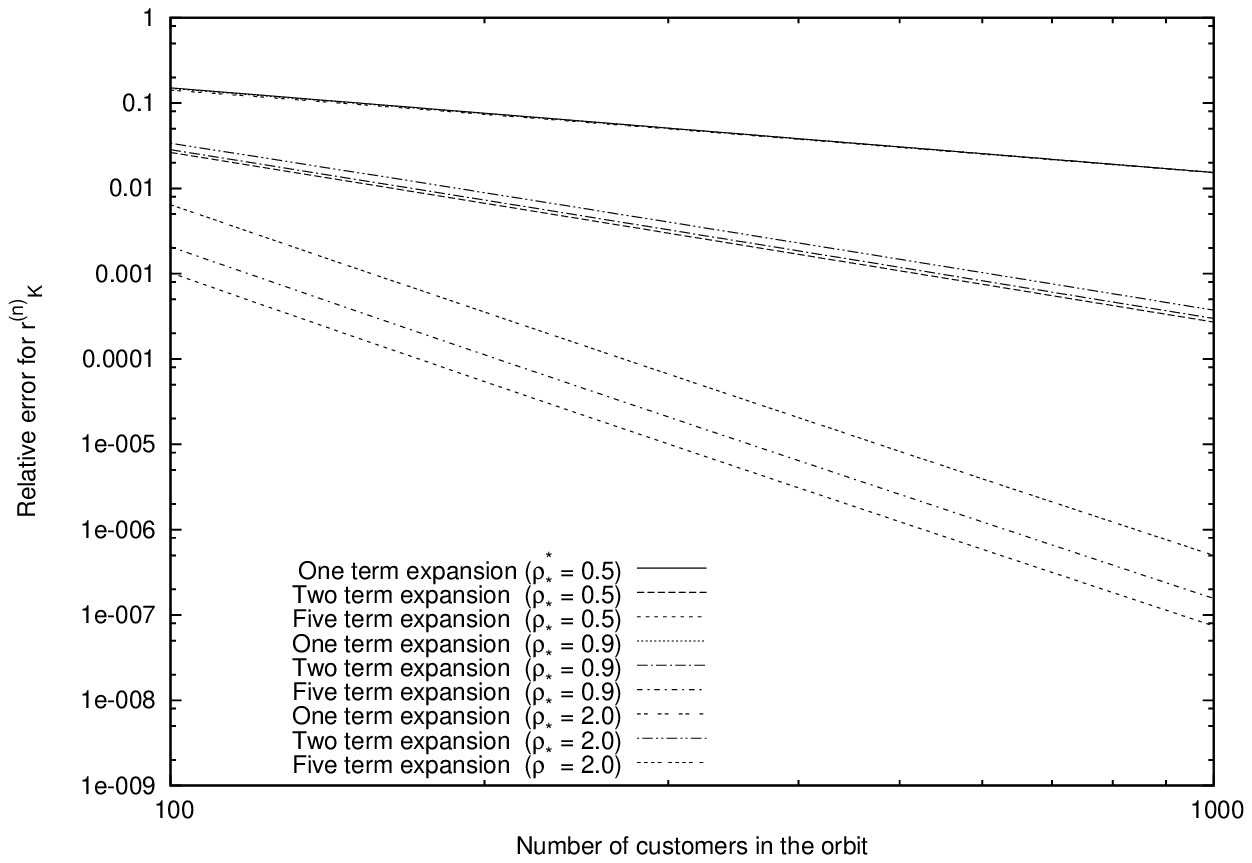} 
\caption{Relative error for $r^{(n)}_K$ vs. $n$.}
\label{relative_error_rho050920:fig}
\end{center}
\end{figure}
\begin{figure}[tbhp]
\begin{center}
\includegraphics[scale=0.8]{./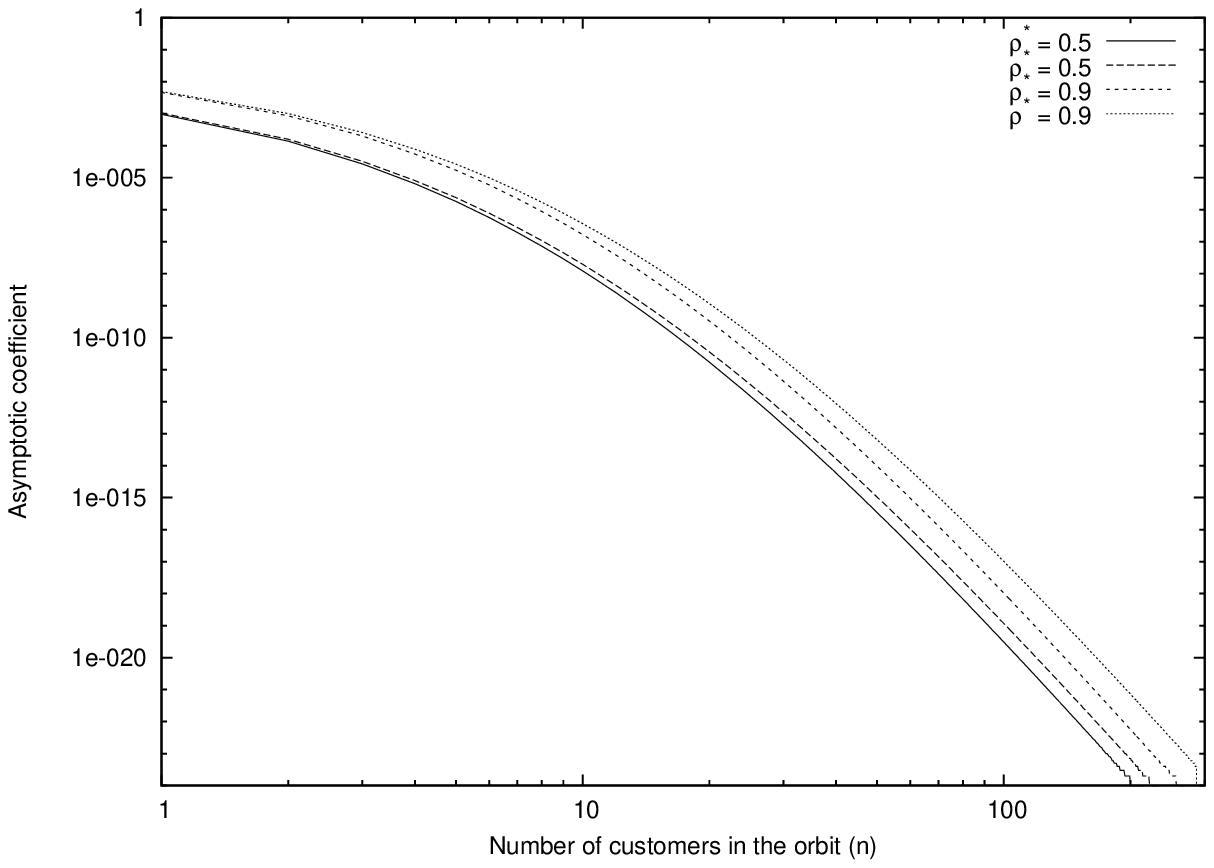} 
\caption{Coefficient of the asymptotic formula.}
\label{tail_asymptotic_nonpersist1_rho0509:fig}
\end{center}
\end{figure}
\begin{figure}[tbhp]
\begin{center}
\includegraphics[scale=0.8]{./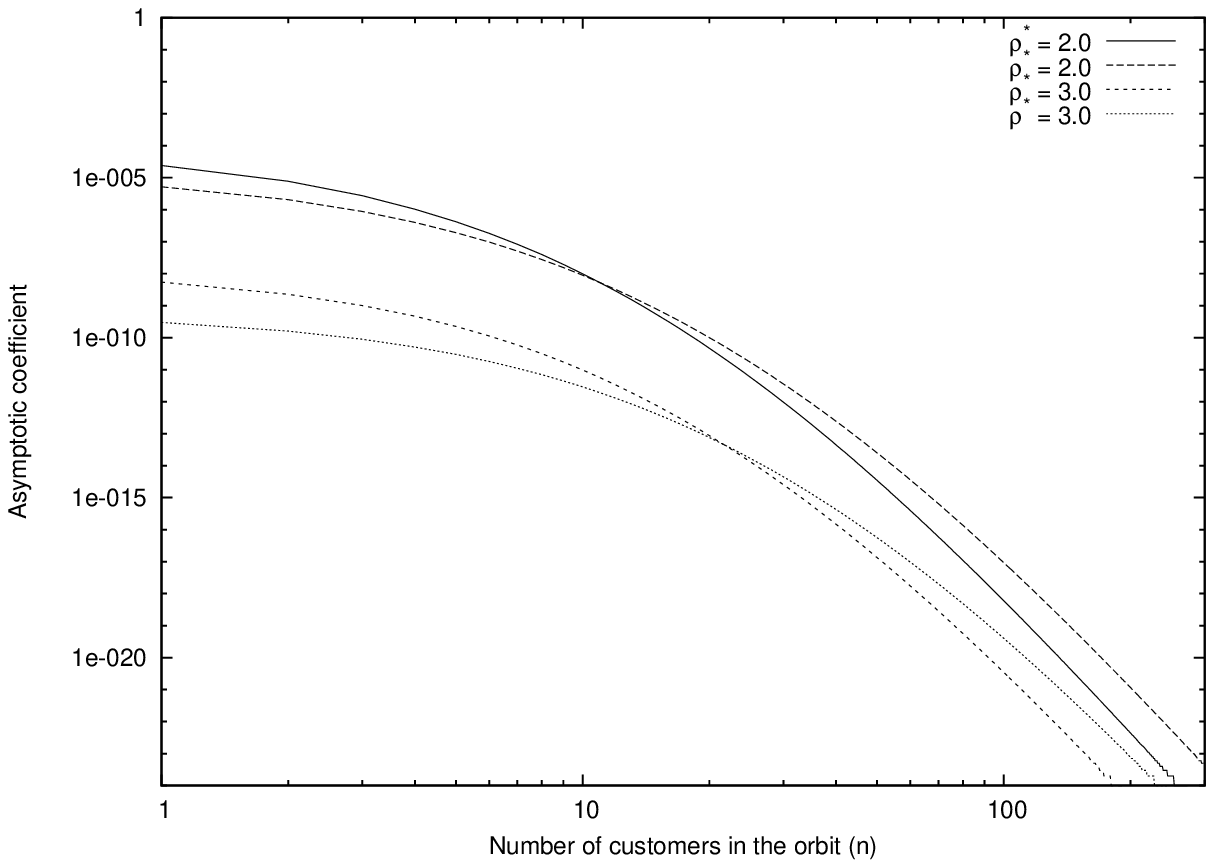} 
\caption{Coefficient of the asymptotic formula.}
\label{tail_asymptotic_nonpersist1_rho2030:fig}
\end{center}
\end{figure}

\subsection{Persistent retrial customers}\label{persistent:sec}
In this section, we consider the persistent case where customers never abandon the system, i.e., $p = q = r = 1$. 
Other parameters are given by $K =  c = 10$, $\nu_i = i$ ($i = 0,1,\dots,c$) and $\mu = 1$. Figure~\ref{taylor_series_expand_rho050709_persistent:fig} expresses the Taylor expansion formulae against the number of customers in the orbit $n$ for the cases $\rho^* = 0.7$ and 0.9. We observe that the Taylor series expansions converge fast to the exact value after a few terms. Figure~\ref{relative_error_rho050709_persist:fig} shows the relative errors against the number of customers in the orbit for the cases  $\rho^* = 0.7$ and $\rho^* = 0.9$. We also observe that the relative errors of the Taylor series expansions  decrease with the number of expansion terms and with the number of customers in the orbit $n$. These observations verify the correctness of our Taylor series expansions. 

\begin{figure}[tbhp]
\begin{center}
\includegraphics[scale=0.8]{./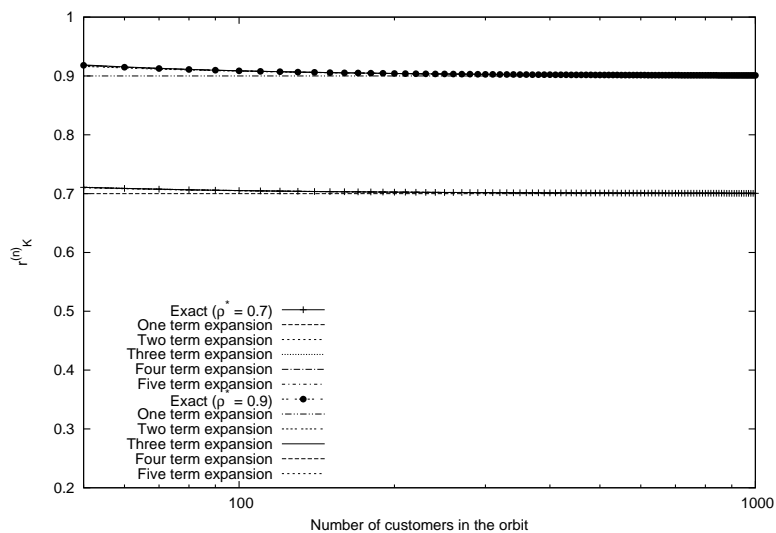} 
\caption{Taylor expansion for $r^{(n)}_K$ vs. $n$.}
\label{taylor_series_expand_rho050709_persistent:fig}
\end{center}
\end{figure}
\begin{figure}[tbhp]
\begin{center}
\includegraphics[scale=0.8]{./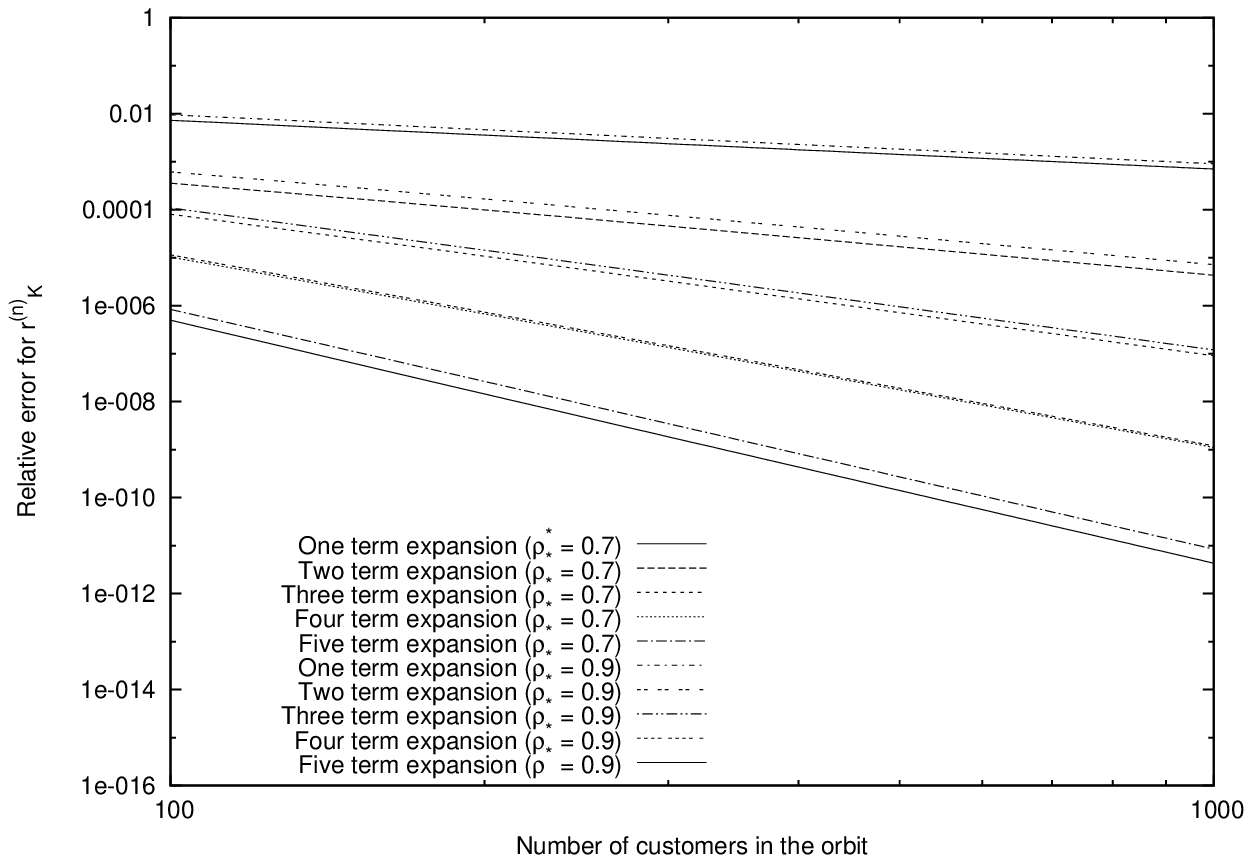} 
\caption{Relative error for $r^{(n)}_K$ vs. $n$.}
\label{relative_error_rho050709_persist:fig}
\end{center}
\end{figure}
\begin{figure}[tbhp]
\begin{center}
\includegraphics[scale=0.8]{./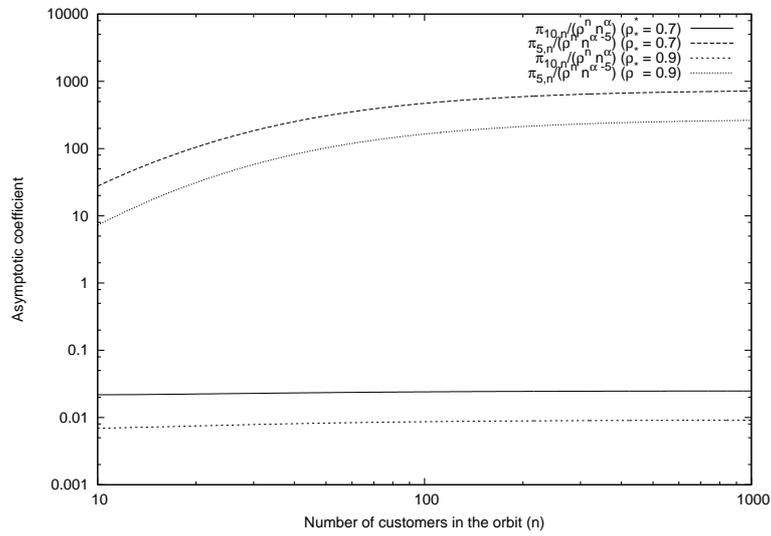} 
\caption{Coefficient of the asymptotic formula.}
\label{tail_persist_rho:fig}
\end{center}
\end{figure}

We investigate the tail probability for the stationary distribution. In particular, we verify the asymptotic formulae in Proposition~\ref{asymptotic_qr1:theo}  and Corollary~\ref{piKminusk:coro} by 
investigating the behavior of $\pi_{K-k,n}/(\rho^n n^{\alpha-k})$ ($k=0,5$) against $n$ when $n$ is large enough. To this end, we consider two cases: $\rho^* = 0.7$ and $\rho^* = 0.9$ 
while other parameters are given by $p = q = r = 1$, $K =  c = 10$, $\nu_i = i$ ($i = 0,1,\dots,c$) and $\mu = 1$. 
We observe from Figure~\ref{tail_persist_rho:fig} that $\pi_{K-k,n}/(\rho^n n^{\alpha-k})$ tends to some constant as $n \to \infty$. 
This suggests that there exists some $D_k$ such that $\lim_{n \to \infty} \pi_{K-k,n}/(\rho^n n^{\alpha-k}) = D_k$ ($k=0,1,\dots,K$). 
This fact is consistent with the theoretical result derived in Kim et al.~\cite{3Kim2012}.

\section{Concluding remarks}\label{conclusion:sec}

In this paper, using a unified perturbation approach, we have derived Taylor series expansion for any element of 
the rate matrices. We have derived recursive formulae for the coefficients for which both numerical and symbolic 
algorithms can be implemented. Using the expansion, we have been able to compute the rate matrices with any desired accuracy by a forward type algorithm. 
Furthermore, by applying the result of Liu et al.~\cite{Binliu11,Binliu10}, we have also obtained asymptotic formulae for the stationary distribution. 

The methodology developed in this paper can be applied to other Markov chains with inhomogeneous structure. 
It is easy to extend our analysis to the model with state-dependent arrivals. It is interesting to examine the methodology for level-dependent QBD with 
more dense rate matrices. This will be the topic for any future research.

\section*{Acknowledgements}
Tuan Phung-Duc was supported in part by Japan Society for the Promotion of Science, JSPS Grant-in-Aid for Young Scientists (B), Grant Number 2673001. 

\end{document}